\newtheorem{theorem}{Theorem}[section]
\newtheorem{fact}[theorem]{Fact}
\newtheorem{proposition}[theorem]{Proposition}
\newtheorem{prop}[theorem]{Proposition}
\newtheorem{claim}[theorem]{Claim}
\newtheorem{lemma}[theorem]{Lemma}		
\newtheorem{corollary}[theorem]{Corollary}
\newtheorem{question}[theorem]{Question}
\theoremstyle{definition}
\newtheorem{definition}[theorem]{Definition}
\newtheorem{example}[theorem]{Example}
\newtheorem{axiom}[theorem]{Axioms}
\newtheorem{remark}[theorem]{Remark}
\newcommand{\NN}{{\mathbb{N}}}
\newcommand{\R}{{\mathbb{R}}}
\newcommand{\QQ}{{\mathbb{Q}}}
\newcommand{\ZZ}{{\mathbb{Z}}}
\newcommand{\sub}{\subseteq}
\newcommand{\sN}[1]{_{#1\in \NN}}
\newcommand{\uhr}[1]{\! \upharpoonright_{#1}}
\newcommand{\ML}{Martin-L{\"o}f}
\newcommand{\SI}[1]{\Sigma^0_{#1}}
\newcommand{\PI}[1]{\Pi^0_{#1}}
\newcommand{\PPI}{\PI{1}}
\newcommand{\bi}{\begin{itemize}}
\newcommand{\ei}{\end{itemize}}
\newcommand{\bc}{\begin{center}}
\newcommand{\ec}{\end{center}}
\newcommand{\ES}{\emptyset}
\newcommand{\tp}[1]{2^{#1}}
\newcommand{\ex}{\exists}
\newcommand{\fa}{\forall}
\newcommand{\la}{\langle}
\newcommand{\ra}{\rangle}
\newcommand{\Kuc}{Ku{\v c}era}
\newcommand{\seqcantor}{2^{ \NN}}
\newcommand{\cantor}{\seqcantor}
\newcommand{\Opcl}[1]{[#1]^\prec}
\newcommand{\leT}{\le_{\mathrm{T}}}
\newcommand{\MLR}{\mbox{\rm \textsf{MLR}}}
\newcommand{\n}{\noindent}
\newcommand{\vsps}{\vspace{3pt}}
\newcommand{\vsp}{\vspace{6pt}}
\newcommand{\leb}{\mathbf{\lambda}}
\newcommand{\sss}{\sigma}
\newcommand{\aaa}{\alpha}
\newcommand{\sq}{\sqsubseteq}
\renewcommand{\S}{S(\omega)}
\newcommand \seq[1]{{\left\langle{#1}\right\rangle}}
\newcommand\+[1]{\mathcal{#1}}
\newcommand{\wt}{\widetilde}
\newcommand{\ol}{\overline}
\newcommand{\ape}{\, \hat{\ } \, }
\newcommand{\lra}{\leftrightarrow}
\newcommand{\LR}{\Leftrightarrow}
\newcommand{\RA}{\Rightarrow}
\newcommand{\LA}{\Leftarrow}
\newcommand{\DA}{\downarrow}
\newcommand{\UA}{\uparrow}
\newcommand{\CCC}{\mathcal{C}}
\newcommand \DemBLR{\textup{Demuth}_{\textup{BLR}}}
\DeclareMathOperator{\Aut}{Aut}
\numberwithin{equation}{section}
\renewcommand{\hat}{\widehat}
\begin{document}

\title{Logic Blog 2020 (the 10th anniversary blog)}

 \author{Editor: Andr\'e Nies}

\maketitle


\setcounter{tocdepth}{1}
\tableofcontents

 {
The Logic   Blog is a shared platform for
\bi \item rapidly announcing  results and questions related to logic
\item putting up results and their proofs for further research
\item parking results for later use
\item getting feedback before submission to  a journal
\item foster collaboration.   \ei

Each year's   blog is    posted on arXiv a 2-3 months after the year has ended.
\vsp
\begin{tabbing}

 \href{http://arxiv.org/abs/2003.03361}{Logic Blog 2019} \ \ \ \   \= (Link: \texttt{http://arxiv.org/abs/2003.03361})  \\

 \href{http://arxiv.org/abs/1902.08725}{Logic Blog 2018} \ \ \ \   \= (Link: \texttt{http://arxiv.org/abs/1902.08725})  \\
 
 \href{http://arxiv.org/abs/1804.05331}{Logic Blog 2017} \ \ \ \   \= (Link: \texttt{http://arxiv.org/abs/1804.05331})  \\
 
 \href{http://arxiv.org/abs/1703.01573}{Logic Blog 2016} \ \ \ \   \= (Link: \texttt{http://arxiv.org/abs/1703.01573})  \\
 
  \href{http://arxiv.org/abs/1602.04432}{Logic Blog 2015} \ \ \ \   \= (Link: \texttt{http://arxiv.org/abs/1602.04432})  \\
  
  \href{http://arxiv.org/abs/1504.08163}{Logic Blog 2014} \ \ \ \   \= (Link: \texttt{http://arxiv.org/abs/1504.08163})  \\

   \href{http://arxiv.org/abs/1403.5719}{Logic Blog 2013} \ \ \ \   \= (Link: \texttt{http://arxiv.org/abs/1403.5719})  \\

    \href{http://arxiv.org/abs/1302.3686}{Logic Blog 2012}  \> (Link: \texttt{http://arxiv.org/abs/1302.3686})   \\

 \href{http://arxiv.org/abs/1403.5721}{Logic Blog 2011}   \> (Link: \texttt{http://arxiv.org/abs/1403.5721})   \\

 \href{http://dx.doi.org/2292/9821}{Logic Blog 2010}   \> (Link: \texttt{http://dx.doi.org/2292/9821})  
     \end{tabbing}

\vsp

\n {\bf How does the Logic Blog work?}

\vsp

\n {\bf Writing and editing.}  The source files are in a shared dropbox.
 Ask Andr\'e (\email{andre@cs.auckland.ac.nz})  in order    to gain access.

\vsp

\n {\bf Citing.}  Postings can be cited.  An example of a citation is:

\vsp

\n  H.\ Towsner, \emph{Computability of Ergodic Convergence}. In  Andr\'e Nies (editor),  Logic Blog, 2012, Part 1, Section 1, available at
\url{http://arxiv.org/abs/1302.3686}.}

%

%

 
The logic blog,  once it is on  arXiv,  produces citations e.g.\ on Google Scholar.
%

%

  \part{Group  theory and its connections to  logic}
   
%

  \section{Nies and Stephan:  non-word automatic free nil-2 groups} 
We follow up on a post on word automaticity of various nilpotent groups by the same guys last year \cite[Section~6]{LogicBlog:19}. To be word automatic means that  given an appropriate encoding of the elements by strings,  the domain and operations are recognizable by finite automata. Other terms in use  for this notion include FA-presentable, and even ``automatic".

 We supply a proof that examples of the last  of three types that were provided there are not word-automatic. The first two types were shown to be word automatic there. 
For an odd prime $p$, let $L_p$ be the free nilpotent-2 exponent $p$ group  of infinite rank. In the notation of \cite[Section~6]{LogicBlog:19} one  has $N\cong Q$;  let $y_{i,k}$ ($i< k$) be free generators of $N$; then  $L_p$ is given by the function $\phi (x_i, x_k) = y_{i,k}$ if $i<k$.  So we have   
  
  \bc $L_p = \la x_i, y_{i,k} \mid x_i^p, [x_i, x_k] y_{i,k}^{-1} , [y_{i,k}, x_r] (i< k) \ra$ \ec
thus the $y_{i,k}$ are actually redundant. This group in a sense generalises the Heisenberg group over the ring $\mathbb F_p$, which would be the case of two generators $x_0, x_1$.

\begin{theorem}  $L_p$ is not word automatic.\end{theorem}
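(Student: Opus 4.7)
The plan is to assume for contradiction that $L_p$ has a word‑automatic presentation over some alphabet $\Sigma$ of size $s$, and derive a contradiction by counting. The main tools I would invoke are two standard facts about FA‑presentable structures: (i) the \emph{bounded overhead lemma}: for every FA‑recognizable $k$‑ary operation $f$ there is a constant $C$ (the state count of the defining synchronous automaton) such that $|f(u_1,\ldots,u_k)| \le \max_i |u_i|+C$; and (ii) the Khoussainov--Nerode theorem that first‑order definable subsets are FA‑recognizable, so in particular the center $Z := Z(L_p) = \{z : \forall x\,[x,z]=1\}$ is FA‑recognizable, which gives $|Z \cap \Sigma^{\le \ell}| \le s^{\ell+1}$. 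Since $[x,y]=xyx^{-1}y^{-1}$ is a composition of a constant number of basic operations, (i) gives a constant $C'$ with $|[x,y]| \le \max(|x|,|y|)+C'$.

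I would fix FA‑encodings $\sigma_i$ of the generators $x_i$, set $L_n := \max_{i<n}|\sigma_i|$, and run the counting at two levels. At the first level, the $\binom{n}{2}$ commutators $y_{i,k}$ for $i<k<n$ are pairwise distinct central elements whose encodings have length at most $L_n+C'$, so
\[
\binom{n}{2} \;\le\; |Z\cap\Sigma^{\le L_n+C'}| \;\le\; s^{L_n+C'+1},
\]
forcing $L_n \ge 2\log_s n - O(1)$. At the second level, I would consider the whole ``Heisenberg slice'' $H_n = \la x_0,\ldots,x_{n-1}\ra$, which has order $p^{n+\binom{n}{2}}$: every element can be written as $\prod x_i^{a_i}\prod [x_i,x_k]^{b_{ik}}$, a product of $O(n^2)$ basic terms, and so has encoding length at most $L_n+O(n^2)C'$; if the automaton is small relative to $p$, this yields $L_n = \Omega(n^2)$.

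The main obstacle is that these inequalities are only lower bounds on $L_n$, and a word‑automatic presentation is free to pick long encodings; the contradiction must come from matching the lower bound against an upper bound. The plan to close the loop is a state‑pigeonhole argument on the synchronous automaton $\mathcal A$ that recognizes the graph of the commutator. Reading a triple $(\sigma_i,\sigma_k,\tau_{i,k})$ encoding $[x_i,x_k]=y_{i,k}$, the state of $\mathcal A$ after any common prefix of length $m$ is one of only $|Q|$ possibilities; thus among $n$ generators whose encodings share a long common prefix (extract such a family by Ramsey / pigeonhole on $\Sigma^m$), the possible outputs $\tau_{i,k}$ are determined by the last few steps and by $|Q|$, which caps the number of distinct central outputs at a bound of the form $|Q|\cdot s^{O(1)}$, independent of $n$. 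Combined with the fact that the $y_{i,k}$ are $\binom{n}{2}$ pairwise distinct, this gives the required contradiction once $n$ is chosen large relative to $|Q|$ and $s$.

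The hard part will be step three: extracting, via Ramsey/pigeonhole on the states of $\mathcal A$, an infinite family of generators $x_{i_1},x_{i_2},\ldots$ whose encodings are sufficiently ``uniform'' that reading any pair $(\sigma_{i_a},\sigma_{i_b})$ leaves the automaton in a bounded number of states. Because the number of pairwise distinct commutators $y_{i_a,i_b}$ is quadratic in the size of this family but the automaton's output is forced to lie in a set of constant‑bounded size, this is where the proof breaks word‑automaticity.
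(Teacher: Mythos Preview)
Your proposal correctly identifies the central tension: the bounded-overhead lemma only gives \emph{lower} bounds on $L_n$, and since the presentation may encode the $x_i$ by arbitrarily long strings, a direct count cannot close the argument. However, the proposed step three does not work. The automaton $\mathcal A$ recognizes the \emph{graph} of the commutator as a ternary relation, reading $(\sigma_{i_a},\sigma_{i_b},\tau)$ synchronously. Even if all the $\sigma_{i_j}$ share a common prefix of length $m$, the state of $\mathcal A$ after $m$ steps depends also on the first $m$ symbols of the output track $\tau$, so pigeonholing on $Q$ alone buys nothing. More fundamentally, for fixed inputs $a,b$ the automaton accepts a unique output string, but as the input pair ranges over your family that string may be \emph{any} element of $\Sigma^{\le \max(|\sigma_{i_a}|,|\sigma_{i_b}|)+C}$; the state set $Q$ in no way caps the \emph{number} of distinct accepted outputs. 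The claimed bound ``$|Q|\cdot s^{O(1)}$, independent of $n$'' is simply false, and no Ramsey refinement on the input encodings alone will force it, because the presentation is free to make the $y_{i,k}$-encodings differ already in their first symbol.

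The paper sidesteps this by abandoning the given generators $x_i$ entirely. It writes down a first-order formula over the automatic structure $(D,\circ,\preceq)$ defining a function $G$ with $z_{n+1}=G(z_n)$; by the standard fact that FA-definable functions add only constant length, $|z_n|=O(n)$. Inside this definition it selects a fresh sequence $u_0,u_1,\ldots$ (each $u_n$ the $\preceq$-least element satisfying a certain commutator-separation condition relative to $\{v:v\preceq z_n\}$), verifies that the $u_i$ remain independent modulo the centre and that the $[u_i,u_k]$ are independent in the centre, and shows inductively that the whole subgroup $\langle u_0,\ldots,u_n\rangle$, of size $p^{\Theta(n^2)}$, is encoded by strings $\preceq z_{n+1}$. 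The contradiction is then the honest count $p^{n^2/2}\le |\Sigma|^{O(n)}$. The idea you are missing is that \emph{first-order definability of the growth function}, not a state-pigeonhole on a single automaton, is what forces the linear length bound against which the quadratic exponent is played off.
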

\begin{proof}  Let  $\alpha, \beta$ denote   strings in the alphabet  of digits 
\n $0, \ldots, {p-1}$, which are thought of as extended by $0$'s if necessary. Let $[\alpha] = \prod_{i< |\aaa|} x_i^{\alpha_i}$.
 Each element of $L_p$ has a normal form 
\bc $[\aaa] \prod_{s < n} \prod_{r< s } [x_r,x_s]^{\gamma_{r,s}}$ \ec
where $|\aaa| = n$ and $0\le \gamma_{r,s} <p$.  By the usual commutator rules in a group that is nilpotent of class 2,  for $|\aaa| = |\beta|=n$ and central elements $c,d$, 
\[ \big [ [\aaa] c, [\beta] d \big ]= \prod_{r<s\le n} [x_r, x_s]^{\aaa_r \beta_s - \aaa_s \beta_r}.\]
This identity will tacitly be used below.

Assume for a contradiction that $L_p$ has a finite automata presentation with domain a regular set  $D\sub \Sigma^*$, and FA-recognizable operation $\circ$. Denote by $\preceq$ the length-lexicographical ordering on $D$. Note that $\preceq$   can also be recognized by a finite automaton. 

%

Let $C\sub D$ denote the regular set of strings that are in the centre of $L_p$.  
\begin{claim} \label{cl: uS} For each finite set $S \sub D$, there is a $\preceq$-least   string $u=u_S \in  D$  such that 
\bi
\item[(i)] $ \forall r [ r \in S-C \RA  [r,u]\not\in S]$. 
\item[(ii)] $\forall v,w \in S ( [v,u]= [w,u] \to \exists c \in C \, cv = w )$.
\ei
\end{claim}
To see this, let $k$ be so large that only $x_i$ with $i< k$ occur in the normal form of any element of $S$, and let $u=x_k$. 

For (i) note that $r$ contains some $x_i$ with $i<k$, so the   normal form of $[r,u]$ contains $[x_i,x_k]$, while the normal form of an element of $S$ does not contain such commutators.

 For (ii) let $v= [\aaa] c$ with $c$ central. Then the normal form of $[v,u]$ ends in $\prod_{i<k} [x_i,x_k]^{\aaa_i}$, which allows us to recover $\aaa$.  This verifies the claim.

 \medskip
 
 We now inductively define  sequences $\seq{z_n}\sN n$ and $\seq{u_n}\sN n$ in $D$. 
 Let $z_0$ be the string representing the neutral element. 
  Suppose now that  $z_n$ has been defined, and write $V_n$ for $\{v \colon v \preceq z_n\}$.  Let $u_n$ be $u_S$ for $S={V_n}$, where $u_S$ is  defined in the claim above. Next let $z_{n+1}$ be the $\preceq$-least string $z$ such that 
 \begin{equation} \label{eqn:zn+1} \forall v,w \in V_n \, \bigwedge_{i<p} w \circ u_n^i \circ [v,u_n] \preceq z.\end{equation}
Note that $z_{n+1} = G(z_n)$ for a function $G\colon D \to D $ that is first-order definable in the   structure $(D, \circ, \preceq)$. This implies that the graph of $G$ can be recognized by a finite automaton in the usual sense of automatic structures, and hence  $|z_n| = O(n)$ by the pumping lemma.

In the following we write $u_{i,k}$ for $[u_i, u_k]$ where $i\neq k$. 
\begin{claim} \label{cl:subgroup} For each $n$, the subgroup $\la u_0 , \ldots, u_n \ra$ generated by $u_0 , \ldots, u_n$ is contained in $V_{n+1}$. \end{claim}
\n This is checked by induction on $n$. For $n=0$ we have $\la u_0 \ra \sub V_1$ because $w=v=1$ is allowed in  (\ref{eqn:zn+1}). For the inductive step, note that by the normal form (and freeness of $L_p$) each element $y$ of $\la u_0 , \ldots, u_n \ra$ has the form 
\bc $ \prod_{i \le n} u_i^{\aaa_i} \prod_{s \le n} \prod_{r< s } [u_{r,s}]^{\gamma_{r,s}}$. \ec
This can be rewritten as $wu_n^{\aaa_n} [v,u_n]$ where
 \bc $w= \prod_{i <  n}   u_i^{\aaa_i} \prod_{s < n} \prod_{r< s } [u_{r,s}]^{\gamma_{r,s}}$
 and 
 $v= \prod_{k<n} u_k^{\gamma_{k,n}}$. \ec
 By inductive hypothesis $w,v \in V_n$. So the element $y$ is in $V_{n+1}$ by  (\ref{eqn:zn+1}).
This verifies the claim.

In the next claim we view elementary abelian $p$-groups as vector spaces over the field $\mathbb F_p$.
\begin{claim} \label{cl:indep}  \mbox{} \bi \item[(a)] The   elements $C u_i$ are linearly independent in  $G/C$. 
\item[(b)] The elements $u_{i,k}$ are   linearly independent in  $C$.
\ei
\end{claim}
We use induction on a bound $n$ for the indices.
For (a) note that $u_{n+1}^i \not \in V_n C$ for $i<p$, for otherwise $v= u_{n+1}^i c \in V_n$ for some central $c$, and clearly $[v,u_{n+1}]=1$ contradicting the condition (i) in Claim~\ref{cl: uS}. Therefore $C\la u_{n+1}\ra  \cap  C \la u_0, \ldots, u_n \ra =0$. 

For (b), inductively the $u_{i,k}$,  $i<k \le n$ are a basis for  a subspace $T_0\le C$.  The linear map given by $Cw\mapsto  [w,u_{n+1}]$ is 1-1 by (ii) of Claim~\ref{cl: uS}. So, by (a), the $[u_i,u_{n+1}]$ are independent, generating a subspace $T_1$. The condition (i) of Claim~\ref{cl: uS} implies that $T_0 \cap T_1 =0$. This concludes the inductive step and verifies  the claim.

We now obtain our contradiction. Given $n$, by Claim~\ref{cl:subgroup}  we have \bc  $\prod_{i<k \le n} u_{i,k}^{\gamma_i,k} \in V_n$ \ec  for each double sequence $\seq{\gamma_{i,k}}_{i<k \le n}$ of exponents in $[0,p)$. By Claim~\ref{cl:indep}  all these elements are distinct. Since $V_n$ consists of strings of length $O(n)$,  we have $p^{n^2/2}$ distinct strings of length $O(n)$, which is false for large enough~$n$.
\end{proof}

\section{Nies: open questions on classes of  closed subgroups of $S_\infty$}
\label{s:OQ nonarch}
The following started in discussions with A.\  Kechris at Caltech in 2016.  It is well known that the closed subgroups of $\S$ form a Borel space, and there is  a Borel action of $\S$ by conjugation  (see~e.g.\  \cite{Kechris.Nies.etal:18}). This is because being a subgroup is a Borel property in the usual   Effros space of closed subsets of $\S$. A natural class of closed subgroups of $\S$ should be closed under conjugation, and often even  is closed under topological isomorphism (an exception being oligomorphicness). Here we ask which natural classes  of closed subgroups of $\S$ are Borel.  It turns out that even for the simplest classes this can be a hard problem.

 Once a class is known to be Borel, one can  study the  relative complexity of  the topological isomorphism problem for this class with respect to Borel reducibility $\le_B$.  For instance, Kechris et al.\  \cite{Kechris.Nies.etal:18} showed that for the Borel properties of   compactness, local compactness, and Roelcke precompactness,  the  isomorphism relation  is Borel equivalent to  $ \mathrm {GI}$, the  isomorphism of countable graphs. This worked by adapting Mekler's result described below. 

Before proceeding, we import some preliminaries from~\cite{Kechris.Nies.etal:18}.
\subsection*{Effros structure of  a Polish space} Given a Polish space $X$, let $\+ F(X)$ denote  the set of closed subsets of $X$. The \emph{Effros structure} on $X$  is the Borel space consisting of  $\+ F(X)$  together with  the $\sigma$-algebra   generated by the sets \bc $\+ C_U = \{ D \in \+ F(X) \colon D \cap U \neq \ES\}$,  \ec for open $U \sub X$. Clearly it suffices to take all the sets $U$ in a countable basis $\seq{U_i}\sN i$ of $X$.  The  inclusion relation  on $\+ F(X)$ is Borel because for $C, D \in 
\+ F(X)$ we have $C \sub D \lra \fa i \in \NN \,  [ C \cap U_i \neq \ES \to D \cap U_i \neq \ES]$.

\subsection*{Representing elements of the Effros structure of $\S$} For a Polish group $G$, we have a Borel actions $G \curvearrowright \+ F(G)$ by translation and by conjugation.  We will only consider the case that $G= \S$.
  In the following $\sss, \tau, \rho$ will denote injective maps on  initial segments of the integers, that is, on tuples of integers without repetitions. Let $[\sss]$  denote the  set of permutations extending~$\sss$:  \bc $\+ [\sss] = \{ f \in \S \colon \sigma \prec f\}$  \ec (this is often denoted  $\+ N_\sss$ in the literature).  The sets $[\sss]$ form a base for the topology of pointwise convergence of $\S$.  For $f \in \S$ let $f \uhr n$ be the initial segment of $f$ of length $n$. Note that the $[f\uhr n]$ form a basis of neighbourhoods of $f$. 
Given  $\sss, \sigma'$ let $\sigma' \circ \sigma$ be the composition as far as it is defined;  for instance, $(7,4,3,1,0 ) \circ (3,4 ,6 ) = (1,0)$. Similarly, let $\sss^{-1}$ be the inverse of $\sss$ as far as it is defined.

\begin{definition} For $n \ge 0$, let  $\tau_n$ denote  the  function $\tau$ defined on $\{0,\ldots, n\}$ such that $\tau(i) = i$ for each $i \le n$. \end{definition} 
\begin{definition} For $P \in \+ F(\S)$,  by $T_P$ we denote the tree describing $P$ as a closed set in the sense that $[T_P] \cap \S = P$. Note that  $T_P = \{ \sss \colon \, P \in \+ C_{[\sss]}\}$. \end{definition}

\begin{lemma} The closed subgroups of $\S$ form a Borel set  $\+ U(\S)$ in $\+ F(\S)$. \end{lemma}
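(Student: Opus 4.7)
The plan is to leverage the tree representation $T_P$ introduced in the preceding definition. By the very definition of the Effros Borel structure, for each partial injection $\sigma$ on an initial segment the predicate ``$\sigma \in T_P$'' is a Borel function of $P$: it holds iff $P \in \+ C_{[\sigma]}$. Since there are only countably many such $\sigma$, any statement about $P$ expressible as a countable Boolean combination of these atomic predicates is automatically Borel in $\+ F(\S)$. My goal is to reduce ``$P$ is a closed subgroup of $\S$'' to such a statement, handling the three subgroup axioms in turn.

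Containment of the identity is immediate: $\id \in P$ iff $\tau_n \in T_P$ for every $n$, a countable conjunction. Next, I claim closure of $P$ under inverses is equivalent to the tree condition $(*)$: for every $n$ and every $\sigma \in T_P$ with $\{0,\ldots,n-1\} \sub \range(\sigma)$, the partial injection $\sigma^{-1}\uhr{\{0,\ldots,n-1\}}$ lies in $T_P$. The forward direction picks $f \in P$ extending $\sigma$ and uses $f^{-1}\uhr n = \sigma^{-1}\uhr{\{0,\ldots,n-1\}}$. For the reverse, given $f \in P$ and $n$, surjectivity of $f$ produces $m$ with $\{0,\ldots,n-1\} \sub f(\{0,\ldots,m-1\})$; applying $(*)$ to $\sigma = f\uhr m$ shows $f^{-1}\uhr n \in T_P$ for every $n$, so $f^{-1} \in [T_P] \cap \S = P$.

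A parallel characterization handles products: $P$ is closed under composition iff for every pair $\sigma, \tau \in T_P$ with $\dom(\sigma) = \{0,\ldots,n-1\}$ and $\dom(\tau) \supseteq \range(\sigma)$, the (well-defined) composition $\tau \circ \sigma$ lies in $T_P$. Given $f, g \in P$ and $n$, I would set $\sigma = g\uhr n$ and take $\tau = f\uhr M$ for $M$ larger than $\max \range(\sigma)$; then $\tau \circ \sigma = (fg)\uhr n$, and ranging over $n$ places $fg$ in $[T_P] \cap \S = P$. The reverse direction is immediate from the tree description of $P$.

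Each of the three resulting clauses is a countable Boolean combination of atoms ``$\sigma \in T_P$'', hence Borel in $P$, so their conjunction cuts out $\+ U(\S)$ as a Borel subset of $\+ F(\S)$. I do not anticipate any genuine obstacle; the only subtlety is the use of bijectivity (surjectivity, really) of elements of $\S$ in the inverse and product characterizations, which is what allows one to pass back from tree-level statements to statements about actual permutations in $P$.
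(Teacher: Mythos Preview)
Your proof is correct and follows essentially the same approach as the paper: express each subgroup axiom as a countable Boolean combination of the basic Borel predicates $P \in \+ C_{[\sigma]}$ (equivalently, $\sigma \in T_P$). The paper's version is terser --- it simply writes the three conditions $D \in \+ C_{[\tau_n]}$, $D \in \+ C_{[\sigma]} \to D \in \+ C_{[\sigma^{-1}]}$, and $D \in \+ C_{[\sigma]} \cap \+ C_{[\tau]} \to D \in \+ C_{[\tau \circ \sigma]}$ without verifying the equivalences --- whereas you take the extra care to restrict $\sigma^{-1}$ and $\tau \circ \sigma$ to initial segments and to spell out why the tree-level conditions are equivalent to the actual closure properties, using surjectivity of permutations; this makes your argument cleaner in that $[\sigma^{-1}]$ and $[\tau\circ\sigma]$ are genuinely basic open sets rather than merely open.
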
 

\begin{proof} $D \in \+ F(\S)$ is a subgroup iff     the following three conditions hold:

\bi \item $D \in \+ C_{[(0, 1, \ldots, n-1 )]}$  for each $n$
\item $D \in \+ C_{[\sss]}  \to D \in \+ C_{[\sss^{-1}]}$  for all $\sss$
\item $D \in \+ C_{[\sss]}  \cap C_{[\tau]}  \to D \in \+ C_{[\tau \circ \sss]}$ for all     $\sss, \tau$. \ei
It now suffices to observe  that all three conditions are Borel. 
\end{proof}

Note that $\+ U(\S)$ is a standard Borel space.
 The statement of the  lemma  actually holds for each Polish group in place of $\S$.

So much for the preliminaries.  We now  note any known results on the complexity of isomorphism. 
Let $G$ always denote a closed subgroup of $\S$ (another, and cumbersome, but persistent, term is non-Archimedean group).
  By 	`group' we usually mean such a $G$. 
\subsubsection*{The class of all closed subgroups of $\S$} It is not hard to  verify that the isomorphism problem is analytic~\cite{Kechris.Nies.etal:18}.  It is Borel above ($\ge_B$)  graph  isomorphism $\mathrm {GI}$.  Nothing else appears to be  known on its complexity.  The following was asked in 
Kechris et al.\  \cite{Kechris.Nies.etal:18}. 
\begin{question} Is the isomorphism relation between  closed subgroups of $\S$ analytic complete? \end{question}
The isomorphism problem for abelian Polish groups is known to be analytic complete \cite{Ferenczi.etal:09}, but the groups used there are not non-Archimedean (are Archimedean?).

\subsubsection*{Discrete groups} 
Discreteness is  Borel because it is equivalent to saying that the neutral element  is isolated. Note that $G$ is discrete iff $G$ is countable. The isomorphism  relation for  discrete groups is Borel equivalent to graph isomorphism. The upper bound is fairly standard, the lower bound is obtained   by A.\  Mekler's technique \cite{Mekler:81} encoding countable graph isomorphism (for a sufficiently rich class of graphs) into isomorphism of countable nil-2 groups of exponent $p^2$, where $p$ is some fixed odd prime. 

\subsubsection*{Procountable groups} The following are equivalent (see e.g.\ Malicki~\cite[Lemma~1]{Malicki:16}): 

\bi \item[(i)] $G$ is  procountable, i.e., an   inverse limit of a chain of countable groups $G_{n+1} \to G_n$

\item[(ii)]  $G$ is a closed subgroup of a Cartesian product of discrete groups

\item[(iii)] there is nbhd base of the neutral element consisting of open normal subgroups. \ei

It  is also  equivalent to ask that 
\bi
\item[(iv)] $G$ has a compatible  bi-invariant metric (such a metric will be necessarily complete because $G$ is  a Polish group). 
\ei

This class includes  the abelian closed subgroups of $\S$ (see below), and of course the   discrete groups. So graph isomorphism GI can be reduced to its isomorphism problem.

\subsubsection*{Abelian groups} To be abelian is easily seen to be Borel because \bc $G$ is abelian iff $\fa \sss, \tau \in T_G \, [ \sss^{-1} \tau^{-1}\  \sss  \tau \prec \text{id}_\omega]$. \ec (In fact any variety of groups is Borel, by a similar argument.) %
As a nonlocally compact example,  consider $\ZZ^\omega$. Also there is a universal abelian closed subgroup of $\S$.

Each abelian closed subgroup of $\S$ has an invariant metric and hence  is pro-countable by Malicki~\cite[Lemma 2]{Malicki:14}, also  Malicki~\cite[Lemma~1]{Malicki:16}.    Su Gao (On Automorphism Groups of Countable Structures, JSL, 1998) has proved that if $\Aut(M)$ is abelian (or merely solvable)  for a countable structure $M$, then the  Scott sentence of $M$ has no uncountable model.

\subsubsection*{Topologically finitely generated groups}   This property is analytical.  The symmetric group~$\S$ is f.g.\ because the group of permutations with finite support is     dense in $\S$, and is  contained in a 2-generated  group, namely the group generated by the successor function on $\ZZ$ and the transposition~$(0, 1)$.  

\begin{question}Is being topologically finitely generated   Borel? 

\n Given $k \ge 2$, is being $k$-generated   Borel?

  \end{question}

\n We observe that among the compact groups, being $k$-generated is Borel. For in a Borel way we can represent  $G$ as  $\projlim\sN n G_n$ for discrete finite groups $G_n$ (with some unnamed projections $G_{n+1} \to G_n$).  Then  $G$ is $k$-generated iff each $G_n$ is $k$-generated: for the nontrivial implication,  for each $n$ let $\ol a_n$ be  a  $k$-tuple of generators for $G_n$. Now take a converging subsequence of a sequence of pre-images of the $\ol a_n$  in $G^n$. The limit generates $G$.  

%

Being 1-generated (monothetic) is Borel, because as it is well known,  such a group  is either discrete, or an inverse limit of cyclic  groups (e.g.\ $(\ZZ_p, +)$) and hence compact. See e.g.\ Malicki~\cite[Lemma~5]{Malicki:16}.  Hewitt and Ross in their book have a more detailed structure theorem for such groups.

Among the abelian groups, being f.g.\ is Borel because such a group is pro-countable. If it is $k$-generated, it has to be an inverse limit of $k$-generated countable abelian groups. An onto map $\ZZ^s \to \ZZ^s$ is of course a bijection. So  $G$ is $k$-generated iff $G $  is of the form $\ZZ^r \times H$ where $H$ is a product of $k-r$ procyclic groups.  This condition is  Borel.

Outside the abelian, it is easy to provide an example of a complicated pro-countable  2-generated group. In the free group $F(a,b)$ let $v_z = b^{-z} a b^z$ ($z \in \ZZ$). For $k \in \NN^+$  let $N_k \le F(a,b)$ be the normal subgroup generated by commutators $[v_0, v_r], r \ge k$. Then $N_1 > N_2 > \ldots$ and $\bigcap_k N_k = \{1\}$. Let $G$ be the inverse limit of the system $\seq{F(a,b)/N_k}\sN k$ with the natural projections. 

\subsubsection*{Compactly generated groups} One  says that $G$ is compactly generated if there is a compact subset $S$ that  {topologically} generates $G$.   Note that if $G$ is locally compact,  it has a    compact open subgroup $K$ (van Dantzig), so   the compact subset $KS$ generates $G$ algebraically. 

\begin{fact} For locally compact groups $G\le_c \S$, being compactly generated is Borel. \end{fact}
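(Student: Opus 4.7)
The approach reduces, via van Dantzig's theorem, to a concrete Borel condition. For a totally disconnected locally compact $G$ one has: $G$ is compactly generated iff there exist a compact open subgroup $K$ and a finite $F \sub G$ with $G = \la K \cup F \ra$; the nontrivial direction covers any compact generating set by finitely many left $K$-cosets to produce $F$. For $G \le_c \S$, natural candidates for $K$ are the clopen basic subgroups $K_n := G \cap [\tau_{n-1}]$, and the property ``$K_n$ is compact'' is Borel in $G$: compactness of a closed subset of $\S$ is equivalent to finite branching of its defining tree, an arithmetic condition in $T_{K_n}$, which is itself Borel in $G$. Applying Kuratowski--Ryll-Nardzewski, fix Borel selectors $d_i : \+F(\S) \to \S$ with $\{d_i(G) : i \in \NN\}$ dense in $G$. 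Then ``$G$ is compactly generated'' becomes the countable disjunction, over $n, k \in \NN$ and $(j_1, \ldots, j_k) \in \NN^k$, of the claim that $K_n$ is compact and $G = H$, where $H := \la K_n, d_{j_1}(G), \ldots, d_{j_k}(G)\ra$.

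It remains to show ``$G = H$'' is Borel in the parameters. Since $K_n \le H$ is open in $G$, the subgroup $H$ is clopen in $G$, so $G = H$ iff every $d_i(G)$ lies in $H$. Membership $d \in H$ is rephrased as reachability of the coset $dK_n$ from the identity coset $K_n$ in the countable discrete space $G/K_n$, under two types of moves: (i) $xK_n \mapsto d_{j_\ell}(G)^{\pm 1} xK_n$, and (ii) $xK_n \mapsto yK_n$ for any $y$ in the double coset $K_n x K_n$. Enumerate $G/K_n$ by $\{d_i(G) K_n\}_{i \in \NN}$ using the Borel equality test $d_{i'}(G) K_n = d_i(G) K_n \iff d_{i'}(G)^{-1} d_i(G) \in K_n$; reachability is then a countable disjunction over finite walks, hence Borel provided both edge relations are Borel.

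The $g$-moves are evidently Borel, since $d_{i'}(G) K_n = d_{j_\ell}(G)^{\pm 1} d_i(G) K_n \iff d_i(G)^{-1} d_{j_\ell}(G)^{\mp 1} d_{i'}(G) \in K_n$. The main obstacle is the double-coset relation $y \in K_n x K_n$, which naively involves an existential quantifier over the uncountable compact group $K_n$. This is resolved by an explicit unwinding: $y \in K_n x K_n$ iff there exists $z \in G$ with $z \in K_n$ and $yzx^{-1} \in K_n$, equivalently, iff a certain finite partial bijection on $\{0, \ldots, n-1\} \cup x^{-1}\{0, \ldots, n-1\}$ (fixing the former and sending $j \mapsto y^{-1}(x(j))$ on the latter), determined entirely by $x, y, n$, belongs to $T_G$ --- a manifestly Borel condition on $G$. (Compactness of $K_n$ together with openness of $xK_nx^{-1}$ additionally imply that each $K_n$-double coset splits into only finitely many $K_n$-cosets, so the reachability graph is finitely branching; this is pleasant but inessential for the Borel argument.) Collecting the pieces, ``$G$ is compactly generated'' emerges as a Borel combination of Borel conditions.
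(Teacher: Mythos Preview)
Your argument is correct, with one small imprecision: $T_G$ consists of injective maps defined on \emph{initial segments} of $\omega$, so the partial bijection $\pi$ you build (whose domain is $\{0,\ldots,n-1\}\cup x^{-1}\{0,\ldots,n-1\}$, generally not an initial segment) cannot literally ``belong to $T_G$''. What you want is that some extension of $\pi$ to an initial segment lies in $T_G$; equivalently, that $G$ meets the open set of permutations extending $\pi$. This is still Borel: it is a countable disjunction over strings $\sigma$ of the conjunction ``$G \in \+ C_{[\sigma]}$'' and ``$\sigma$ agrees with $\pi$ on $\dom(\pi)$'', the second condition being Borel in $G$ because $\pi$ is a Borel function of $G$ via the selectors $d_i,d_{i'}$. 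With this reading your double-coset step goes through.

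The paper's proof takes a more direct, purely syntactic route that avoids both selectors and the coset graph. It notes that a compact open generating set $C$ is a finite union of pieces $G\cap[\rho_j]$ (with ``$G\cap[\rho]$ is compact'' arithmetic in $T_G$), and that $C$ topologically generates $G$ iff for every $\eta\in T_G$ one can find a group word $t$ and strings $\sigma_i$ with $G\cap[\sigma_i]\subseteq C$ such that the partial composition $t(\sigma_1,\ldots,\sigma_m)$ (using the string operations defined earlier in the paper) extends $\eta$. Everything is then a countable Boolean combination of conditions on $T_G$.

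So the paper stays entirely at the level of strings and their composition, while you pass through the discrete quotient $G/K_n$ and a Schreier-type reachability argument, unwinding the one nontrivial edge relation (double cosets) back to a tree condition. Your approach makes the role of van Dantzig and the coset structure explicit and is perhaps more conceptual; the paper's is shorter and never leaves $T_G$. Both land on arithmetic conditions in the tree, just packaged differently.
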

To see this, note that if $G$ is c.g.\ iff it is topologically generated by a compact open set $C$. Such a set $C$ is given as a finite union of sets $[T_G] \cap [\sss]$ for strings on $T_G$, and we can describe arithmetically  whether  a set   $[T_G] \cap [\sss]$ is compact. So we have to express that there is $C$ such that for each $\eta$ on $T_G$, there is a term $t$ and finitely many $\sss_i$ with $[\sss_i] \cap T_G \sub C$ so that $t$ applied to the $\sss_i$ yields an extension of $\eta$. This is arithmetical. 
\begin{question} Is being (topologically) compactly  generated   Borel? \end{question}
\subsubsection*{Oligomorphic groups} To be oligomorphic  means that for each  $n$ there are only finitely many $n$-orbits. Equivalently the orbit structure $M_G$ is $\omega$-categorical. By Coquand's work (elaborated in  Ahlbrandt and Ziegler \cite{Ahlbrandt.Ziegler:86})  oligomorphic groups $G, H$ are isomorphic iff  $M_G$ and $M_H$ are bi-interpretable. 
Nies, Schlicht and Tent~\cite{Nies.Schlicht.etal:19}  have proved that isomorphism of oligomorphic groups is $\le_B E_\infty$, the universal countable Borel equivalence relation. So it is way below graph isomorphism.  In fact it is unknown to be nonsmooth. By Harrington-Kechris-Louveau, nonsmoothness is equivalent to an affirmative answer to the following.

 \begin{question} Is  $E_0 \le_B $  isomorphism of oligomorphic groups? \end{question}

%
%
%
%

\subsubsection*{Amenable   groups (with A.\ Iwanow and B.\ Majcher)}  
Recall that a Polish group $G$ is amenable if   each compact space it acts on has an invariant probability measure. $G$ is extremely amenable if each such action has a fixed point (so the point mass on it is the required probability measure). For discrete groups, this is equivalent to the usual Folner condition.

  Discrete amenable group form
a Borel subset. For,  applying Kuratowski-Ryll-Nardzewski selectors
the Folner condition can be   presented in a Borel form.

Nilpotent groups are amenable. Thus, Mekler's result can be also
applied to the isomorphism relation of discrete amenable groups, making it equivalent to GI. 

Amenability (without the assumption of discreteness) is Borel by   its  characterisation due to Schneider and Thom~\cite{Schneider.Thom:17}.
The description is more less
in the style of Folner.

For more detail, including  extreme amenability, see Section~\ref{s:IvanovMajcher}.

 
 \subsubsection*{Maximal-closed groups}      Fixing  some bijection $\QQ^n \leftrightarrow \omega$, the group AGL($\QQ^n)$ of affine linear transformations can be seen as a closed subgroup of $\S$.    Kaplan and   Simon~\cite{Kaplan.Simon:16} showed that  it is  a maximal closed subgroup (that is also countable).    Agarwal   and Kompatscher~\cite{Agarwal.Kompatscher:18}  have provided   continuum many maximal-closed groups that are not even algebraically isomorphic, using   ``Henson digraphs" that were introduced in  a paper of Henson. 
 
 Clearly being maximal-closed is  $\Pi^1_1$. It is not known to be Borel.
%

 \subsection*{Recursion theoretic view} \label{s:RT} 
The Effros space is insufficient here. We need a more concise way to represent closed subgroups of $\S$.  They are given by trees without dead ends satisfying a certain $\PPI$ condition.

Let $\mathbb T $ be the  tree of all pairs $\la \sss, \sss' \ra$ of the same length $n$ such that
$\sss(i) = k \lra \sss'(k) = i$ for each $i,k < n$. In other words, there is $f \in \S$ such that $\sss \prec f$ and $\sss' \prec f^{-1}$.

If $B $ is a subtree of $\mathbb T$ without dead ends, then for each $\la f , f' \ra \in [B]$, $f$  is a permutation of $\omega$ with inverse $f'$. We can formulate as a $\PPI$ condition on $B$ that $\{f \colon \la f, f^{-1} \ra \in [B]\}$ is closed under inverses and product. 

If $B$ is a computable tree we say that the group given by $[B]$ is computable. 

\begin{question}  Are there   two   compact, computably isomorphic computable   subgroups of $\S$ such that no computable copies are conjugate via a computable permutation of $\S$?. \end{question}

%
%
%
%



\section{Ivanov and Majcher: amenable subgroups of $S(\omega )$} 
\label{s:IvanovMajcher}
%
%


In this post we show that the properties of being 
  amenable and extremely amenable for Polish groups are Borel.

%
\label{intro}

Given a Polish space ${\bf Y}$ let $\mathcal{F} ({\bf Y})$ 
denote the set of closed subsets of~${\bf Y}$. 
The Effros structure on $\mathcal{F} ({\bf Y})$ 
is the Borel space with respect to the $\sigma$-algebra generated by 
the sets 
$$ 
\mathcal{C}_U = \{ D\in \mathcal{F} ({\bf Y}): D\cap U \not=\emptyset \}, 
$$ 
for open $U \subseteq {\bf Y}$. 
For various ${\bf Y}$ this space serves for analysis 
of Borel complexity of families of closed subsets 
(see \cite{Kechris.Nies.etal:18} for some recent results). 
It is convenient to use the fact that there is a sequence 
of Kuratowski-Ryll-Nardzewski selectors  (selectors, in brief)
$s_n : \mathcal{F} ({\bf Y}) \rightarrow {\bf Y}$, $n\in \omega$, 
which are Borel functions such that for every non-empty 
$F\in \mathcal{F} ({\bf Y})$ the set $\{ s_n (F) ; n\in \omega \}$ 
is dense in $F$. 

\bigskip

We consider $S(\omega)$ as a
complete metric space by defining
\[
d(g,h) = \sum \{ 2^{-n} \mid  g(n) \not= h(n)
\mbox { or }g^{-1}(n) \not= h^{-1}(n) \} .
\]
 
Let $S_{<\infty}$ denote the set of all bijections between
finite substes of $\omega$. 
Let 
\[
S^+_{<\infty} = \{ \sigma \in S_{<\infty} \mid \mathsf{dom} [\sigma ]  
\mbox{ is an initial segment of } \omega \} . 
\]
The family  
\[
\{ {\mathcal N}_{\sigma} \mid  \sigma \in S^+_{<\infty} \} 
\]
is a basis of the Polish topology of $S(\omega )$. 

We mention here that the set ${\mathcal U} (S(\omega ))$ 
of all closed subgroups of $S(\omega )$
is a Borel subset of ${\mathcal F} (S(\omega ))$ (see Lemma 2.5 of \cite{Kechris.Nies.etal:18}). 

\bigskip 

Since $S(\omega )$ is a Polish group (in particular the multiplication is continuous) we may extend the set of   selectors $s_n$, $n\in \omega$, by group words of the form $w(\bar{s})$
which define Borel maps ${\mathcal F} (S(\omega )) \rightarrow S(\omega )$
and respectively ${\mathcal U} (S(\omega )) \rightarrow S(\omega )$. 
In particular for any closed $G\le S(\omega )$ all $w(\bar{s}) (G)$ form 
a dense subgroup. 
Below for simplicity we will always assume that already all $s_n (G)$, 
$n\in \omega$, form a dense subgroup of $G$.  

\subsection{Closed subgroups and amenability}  \label{s:closed amenable} 

In this section we apply the description of amenable topological groups
found by F.M. Schneider and A. Thom in \cite{Schneider.Thom:18}  in order to analyse 
amenability for closed subgroups of $S(\omega )$. 

Let $G$ be a topological group, $F_1 ,F_2\subset G$ are finite and $U$ be an identity neighbourhood. 
Let $R_U$ be a binary relation defined as follows: 
$$
R_U=\{ (x,y) \in F_1\times F_2 : yx^{-1} \in U \} . 
$$ 
This relation defines a bipartite graph on $(F_1 , F_2 )$. 
Let 
$$
\mu (F_1 ,F_2 ,U) = |F_1 | - \mathsf{sup} \{ |S| - |N_R (S)| : S\subseteq F_1 \} , 
 $$ 
where $N_R (S) = \{ y\in F_2 : (\exists x\in S) (x,y)\in R_U \}$. 
By Hall's matching theorem this value is the {\em  matching number} of the graph $(F_1 , F_2 , R_U )$. 
Theorem 4.5 of \cite{Schneider.Thom:18} 
gives the following description of amenable topological groups. 

\

{  Let $G$ be a Hausdorff topological group. The following are equivalent. \\ 
(1) $G$ is amenable. \\
(2) For every $\theta \in (0,1)$, every finite subset $E\subseteq G$, and every identity neighbourhood $U$, 
there is a finite non-empty subset $F\subseteq G$ such that 
$$
\forall g\in E (\mu (F,gF,U) \ge \theta |F|). 
$$ 
(3) There exists $\theta \in (0,1)$ such that for every finite subset $E\subseteq G$, and every identity neighbourhood $U$, 
there is a finite non-empty subset $F\subseteq G$ such that 
$$
\forall g\in E (\mu (F,gF,U) \ge \theta |F|). 
$$
}
 
It is worth noting here that when an open neighbourhood $V$ contains $U$ 
the number  $\mu (F,gF,U)$ does not exceed $\mu (F,gF,V)$. 
In particular in the formulation above we may consider neighbourhoods $U$ 
from a fixed base of identity neighbourhoods. 
For example in the case of a closed $G \le S(\omega )$ 
we may take all $U$ in the form of stabilizers  
$V_{[n]} = \{ f\in G : f(i) = i$ for $i<n \}$.  
It is also clear that we can restrict all $\theta$ by rational numbers. 
From now on we work in this case.

\begin{theorem} \label{thmamen} 
The class of all amenable closed subgroups of $S( \omega )$ 
is Borel.  
\end{theorem}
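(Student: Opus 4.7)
The plan is to rewrite the Schneider--Thom condition (2) as a Borel predicate on $G \in \mathcal{U}(S(\omega))$. The obstruction to a direct reading is the quantification over arbitrary finite subsets $E, F \subseteq G$; to overcome it, I will restrict $E$ and $F$ to the dense subgroup $H(G) := \{s_n(G) : n \in \omega\}$ supplied by the selectors, using the standard fact that a Polish group is amenable iff every (equivalently, some) dense subgroup is amenable in the induced topology: any $G$-action on a compact space is also an $H$-action, and an $H$-invariant probability measure is automatically $G$-invariant by density and joint continuity, while the converse is immediate.

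Applying Schneider--Thom (2) to $H$, whose identity neighbourhood base is $\{V_{[n]} \cap H : n \in \omega\}$, and observing that for $x, y \in H$ the condition $yx^{-1} \in V_{[n]} \cap H$ coincides with $yx^{-1} \in V_{[n]}$ (so the matching numbers computed in $H$ equal those computed in $G$), I arrive at the reformulation
\[
G \text{ is amenable} \iff \forall \theta \in (0,1) \cap \QQ \; \forall n \in \omega \; \forall I \in [\omega]^{<\omega} \; \exists \emptyset \neq J \in [\omega]^{<\omega} \; \forall i \in I : \mu(F_J, s_i(G) F_J, V_{[n]}) \geq \theta \cdot |F_J|,
\]
where $F_J := \{s_j(G) : j \in J\}$. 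All outer quantifiers now range over countable index sets. For fixed $\theta, n, I, J$, the inner predicate is determined by finitely many atomic conditions of the form $[s_i(G) \cdot s_k(G) \cdot s_j(G)^{-1}](l) = l$ with $l < n$, $j, k \in J$, $i \in I$, each of which is Borel in $G$ since every $s_m : \mathcal{U}(S(\omega)) \to S(\omega)$ is Borel and the group operations on $S(\omega)$ are continuous. The cardinality $|F_J|$ is determined by the Borel equalities $s_j(G) = s_{j'}(G)$, and Hall's matching number is a finite computable function of the resulting incidence bits. Thus the displayed condition is a countable Boolean combination of Borel sets, yielding the theorem.

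The main obstacle is the justification of the dense-subgroup reduction. If one prefers to avoid invoking the general fact, a direct perturbation argument works: given a witness $F \subseteq G$ for condition (2) with parameters $\theta, E, V_{[n]}$, use density to pick $F' \subseteq H$ of the same cardinality with each element $V_{[n']}$-close to the corresponding element of $F$ for sufficiently large $n'$; the bipartite incidence relation defining $\mu(F', s_i(G) F', V_{[n]})$ is then at least as dense as the original, so $F'$ remains a witness, possibly after an infinitesimal shrinkage of $\theta$ which is harmless because $\theta$ is universally quantified. The symmetric replacement of an arbitrary $E \subseteq G$ by $E' \subseteq H$ proceeds in the same fashion. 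This perturbation argument is the only non-routine analytic step; the remainder is a mechanical unfolding of Borel quantifier arithmetic.
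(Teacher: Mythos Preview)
Your proof is correct and follows essentially the same route as the paper: both arguments invoke the Schneider--Thom characterisation, replace the finite subsets $E,F$ by tuples of Kuratowski--Ryll-Nardzewski selector values (so that all quantifiers become countable), appeal to the fact that amenability passes from a dense subgroup to the ambient group, and then observe that the matching-number inequality unfolds into a finite Boolean combination of Borel atomic conditions in the selectors. The only cosmetic difference is that the paper makes explicit the extension of the selector family by group words so that $\{s_n(G)\}$ is genuinely a sub\emph{group}, a point you use implicitly; your optional perturbation argument is not needed, and the paper does not pursue it either.
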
 

\begin{proof} 
Since the family of all closed subgroups of $S(\omega )$ is Borel 
it suffices to prove the following claim. 

CLAIM.  For every basic open neighbourhood $U$ of the unity, any rational $\theta \in (0,1)$ and any pair of tuples $\bar{s}$ 
and $\bar{s}'$ of   selectors 
the family of all closed $Z \subseteq S(\omega )$ with the condition 
$$
\forall g\in \bar{s}(Z) (\mu (\bar{s}'(Z),g\bar{s}'(Z),U) \ge \theta |\bar{s}'(Z)|)  
$$
is Borel. 
 
Indeed, let us denote the condition of the claim by 
$F\o (U, \theta ,\bar{s}, \bar{s}' )$. 
Then having Borelness as above we see that 
the (countable) intersection by all $U$, $\theta$ and $\bar{s}$ 
of the families 
$$
\bigcup \{ \{ G \le_c S(\omega ) : G\models F\o (U, \theta ,\bar{s}, \bar{s}' ) \} : 
\bar{s}' \mbox{ is a tuple of } 
$$ 
$$
\mbox{   selectors} \}   
$$ 
is also Borel. 
Note that this family exactly consists of closed subgroups $G$ 
having dense sabgroups satisfying  condition (2) of Schneider-Thom's theorem. 
It is well-known that groups having dense amenable subgroups are amenable. 
In particular we see that the claim above implies the theorem.  

Let us prove the claim. 
For a closed $Z\subseteq  S(\omega )$, $g\in \bar{s}(Z)$ and 
$F= \{ f_1 ,\ldots , f_k \}$ consisting of entries of $\bar{s}'(Z)$ 
to guarantee the inequality $\mu (F,gF,U ) \ge \theta |F|$    
we only need to demand that for every 
$S\subseteq F$ the following inequality holds: 
$$ 
| S| - k + \theta \cdot k \le |N_R (S)| , 
$$
where  $N_R (S)$ is defined with respect to $(F, gF)$ 
and $U$. 
To satisfy this inequality we will use the observation that 
when $S'\subseteq gF$ and $\rho$ is a function 
$S' \rightarrow S$ such that 
$g f (\rho (g f ))^{-1} \in U$ for each $g f \in S'$ 
then $|S'| \le |N_R (S ))|$. 

The following condition formalizes $\mu (F,gF,U ) \ge \theta |F|$: 
$$
\bigwedge_{S\subseteq F} \bigvee \{ 
   \bigwedge_{gf \in S'} (g f (\rho (g f ))^{-1} \in U)
: S' \subseteq gF 
\mbox{ , }
\rho :S' \rightarrow S \mbox{ , }  
$$
$$ 
| S| - k + \theta \cdot k \le |S'| \} . 
$$ 
By the choice of $g$ and $F$ we see that all closed 
$Z\subseteq S(\omega )$ satisfying it form a Borel family. 
\end{proof} 

Let ${\bf U}$ be the Urysohn space. 
By \cite{Uspenskij:90}  every Polish group is realized as a closed subgroup of 
$Iso({\bf U})$. 
Applying the proof given above to ${\+ F}(Iso ({\bf U}))$ 
we obtain the following corollary. 

\begin{corollary} 
The class of all amenable closed subgroups of $Iso({\bf U})$ 
is Borel.  
\end{corollary}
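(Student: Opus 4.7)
The plan is to transfer the proof of Theorem~\ref{thmamen} from $S(\omega)$ to $Iso(\mathbf{U})$ almost verbatim, exploiting that $Iso(\mathbf{U})$ is itself a Polish group and that the Schneider--Thom characterisation is stated for arbitrary Hausdorff topological groups.

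First, I would verify that the closed subgroups of $Iso(\mathbf{U})$ form a Borel subset of $\mathcal{F}(Iso(\mathbf{U}))$. As remarked in Section~\ref{s:OQ nonarch}, this holds for every Polish group: closure under multiplication and inverse becomes a Borel condition in the Effros space via any countable basis of open sets. Second, I would fix a sequence of Kuratowski--Ryll--Nardzewski selectors $s_n : \mathcal{F}(Iso(\mathbf{U})) \to Iso(\mathbf{U})$ and close them under group words, just as in the $S(\omega)$ case, so that for every closed $G \le Iso(\mathbf{U})$ the set $\{ s_n(G) : n \in \omega\}$ is a dense subgroup of $G$. Third---the one ingredient that needs a genuine replacement---I would fix a countable basis $\{U_k\}_{k \in \omega}$ of identity neighbourhoods of $Iso(\mathbf{U})$ in place of the clopen stabilisers $V_{[n]}$ used for $S(\omega)$; concretely one may take the open sets $\{ g : \max_{i \le m} d(g(a_i), a_i) < \varepsilon\}$, where the $a_i$ range over finite tuples from a fixed countable dense subset of $\mathbf{U}$ and $\varepsilon$ ranges over $\mathbb{Q}^+$. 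Then the membership relation $y x^{-1} \in U_k$ is Borel in $Z \in \mathcal{F}(Iso(\mathbf{U}))$ whenever $x,y$ are values of selectors or group words, because the selectors are Borel and $U_k$ is open.

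With these replacements the claim inside the proof of Theorem~\ref{thmamen} carries over unchanged: for each basic $U_k$, rational $\theta \in (0,1)$, and pair of tuples of selectors $\bar{s}, \bar{s}'$, the predicate $F_o(U_k, \theta, \bar{s}, \bar{s}')$ asserting $\mu(\bar{s}'(Z), g\bar{s}'(Z), U_k) \ge \theta |\bar{s}'(Z)|$ for every $g \in \bar{s}(Z)$ unfolds into the same finite Boolean combination of Borel atomic conditions of the form $w(\bar{s})(Z) \cdot w'(\bar{s})(Z)^{-1} \in U_k$ used there. Intersecting over the countable data $(U_k, \theta, \bar{s})$ the union over $\bar{s}'$ of these Borel families, and conjoining with the Borel condition of being a closed subgroup, yields exactly the amenable closed subgroups of $Iso(\mathbf{U})$ via Schneider--Thom's condition (2); the passage from amenability of the dense subgroup $\{s_n(G)\}$ to amenability of $G$ is again by the standard fact that dense amenable subgroups yield amenability. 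I do not anticipate a serious obstacle; the only real work is the bookkeeping around the new neighbourhood basis and a careful re-check that the combinatorial expansion of $\mu$ given at the end of the proof of Theorem~\ref{thmamen} uses nothing specific to $S(\omega)$ beyond the Borelness of the atomic conditions $\cdot \in U_k$, which is ensured by the choice of basis above.
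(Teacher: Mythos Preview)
Your proposal is correct and takes essentially the same approach as the paper: the paper simply says ``applying the proof given above to $\mathcal{F}(Iso(\mathbf{U}))$,'' and you have spelled out exactly what that entails (a countable neighbourhood basis of the identity in place of the stabilisers $V_{[n]}$, selectors closed under group words, and the Borelness of the atomic conditions). The only extra ingredient the paper mentions is Uspenskij's theorem that every Polish group embeds as a closed subgroup of $Iso(\mathbf{U})$, but that is motivational context rather than an input to the proof itself.
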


\subsection{Closed subgroups and extreme amenability}   

\label{s:extr amenable}

Let $G$ be a topological group. 
The group $G$ is said to be extremely amenable if every continuous action of $G$ on a non-empty compact Hausdorff space admits a fixed point. 

We begin by fixing a left-invariant metric $d$ inducing the topology of 
$S(\omega )$ (resp. $Iso ({\bf U})$). 
Recall from (\cite{Pestov:05}, Theorem 2.1.11) that $G \le_c S(\omega )$ 
is extremely amenable if and only if the left-translation action of 
$G$ on $(G,d)$ is finitely oscillation stable. 
From (\cite{Pestov:05}, Theorem 1.1.18) and 
(\cite{Melleray.Tsankov:13}, proof of Theorem 3.1) this is equivalent 
to the following condition:
\begin{quote} 
For any $\varepsilon > 0$ and a finite $F\subset G$ there exists a finite 
$K \subseteq G$ such that for any function $c: K \rightarrow  \{ 0,1\}$ 
there exists $i \in \{ 0,1\}$ and $g \in G$ 
such that for any $f \in F$ there exists $k \in c^{-1}(i)$ 
with  $d(gf,k) < \varepsilon$. 
\end{quote} 

We consider the case when $G$ has a countable base of the topology.  
By the definition of extreme amenability if $G$ has a dense subgroup which is extremely amenable, then $G$ is extremely amenable too. 
Now it is easy to see that when $D \subseteq G$ is a countable dense 
subgroup of $G$ then extreme amenability of $G$ is equivalent to  
condition above for the elements taken in $D$.

We now see that when $G\in {\+ F}(S(\omega ))$ 
(resp. ${\+ F}(Iso ({\bf U}))$), extreme amenability of $G$ 
is equivalent to a countable conjunction of the folowing conditions. 
\begin{quote} 
Let $\bar{s}$ be a tuple of   selectors 
and $\varepsilon \in {\bf Q}^+$. 
Then there is a   selectors $\bar{t}$ 
such that for any function $c: \bar{t} \rightarrow  \{ 0,1\}$ 
there exists $i \in \{ 0,1\}$ and a selector $s'$ such that 
for any $s\in \bar{s}$ there exists $k\in c^{-1}(i)$ 
with $d(s'(G)s(G), k(G))<\varepsilon$. 
\end{quote} 
We see that extreme amenability is a Borel property. 

\subsection{Comments} 

1. The argument given in Section~\ref{s:extr amenable} is adapted from 
the proof of Theorem 1.3 in \cite{Melleray.Tsankov:13}. 
Originally \cite{Melleray.Tsankov:13} considers the following situation. 
Let $G$ be a Polish group and $\Gamma$ be a countable group. 
Let us consider the Polish space $Hom(\Gamma ,G)$ of all 
homomrphisms from $\Gamma$ to $G$. 
By Theorem 3.1 in \cite{Melleray.Tsankov:13} 
the subset of all $\pi \in Hom(\Gamma ,G)$ such that  
$\overline{\pi (\Gamma)}$ 
is extremely amenable is  a $G_{\delta}$ subset of $Hom(\Gamma ,G)$.
By Corollary 18 of \cite{Kaichouh:15} the set of all representations 
from $Hom(\Gamma, G)$ whose image is an amenable subgroup of $G$ is 
also $G_{\delta}$ in $Hom(\Gamma ,G)$. \\ 
2. Let ${\+ G}_n$ be the space of all $n$-generated (discrete) 
groups with distinguished $n$-tuples of generators $(G, \bar{g})$ 
(so called {\em marked groups}). 
This is a compact space under so called {\em Grigorchuk topology}. 
In papers \cite{Benli.Kaya:19} and \cite{Wesolek.Williams:17} descriptive complexity 
in ${\+ G}_n$ of some versions of amenability is considered. 
The authors of \cite{Benli.Kaya:19} show that amenability is ${\bf \Pi}^0_2$. 
They ask if it is ${\bf \Pi}^0_2$-complete.  

A group $G$ is called {\em elementarily amenable} if it is in the smallest class of groups which contains all abelian and finite ones and is closed 
under quotients, subgroups, extensions corresponding to 
exact sequences $1 \rightarrow K \rightarrow G \rightarrow H\rightarrow 1$ and directed unions. 
It is proved in \cite{Wesolek.Williams:17} that elementary amenability is coanalytic 
and non-Borel. \\ 
3. Let us fix an indexation of all computably enumerable groups on 
$\omega$ (i.e. computably presented groups). 
Under this indexation computable groups correspond to 
groups with decidable word problem. 
It is easy to see that F\o lner's condition of amenability 
(or the Schneider-Thom's condition of Section~\ref{s:closed amenable}  in the case of 
discrete groups) define a $\Pi^0_2$ subset of indices.   
On the other hand applying Theorem 3 of \cite{Bilanovic.etal:20} it is easy to see that this property is $\Pi^0_2$-hard (it is a Markov property). 
Similarly one easily obtains that extreme amenability is $\Pi^0_2$-complete.

  E-mail:  Aleksander.Iwanow\@polsl.pl


\newcommand{\mor}[3]{{\,  }_#2 #1_#3}
\newcommand{\morn}[3]{#1\colon #2 \to #3}
\newcommand{\Op}{\text{Op}}
\newcommand{\Inv}{\text{Inv}}
\newcommand{\rdom} {\mathbf r}
\newcommand{\ldom}{\mathbf l}

\section{Nies: Stone-type duality for  totally disconnected locally compact   groups}
\n  In this post all topological groups  will be   Polish, and they all have a basis of neighborhoods of $1$ consisting of open subgroups. As is well-known, such a group is  topologically isomorphic to a closed subgroup of the symmetric group on $\NN$, denoted~$\S$. A~homeomorphic embedding into $\S$ is obtained for instance  by letting the group act by left translation on the   left  cosets of open subgroups in that basis of neighborhoods of $1$.
 
 Nies, Schlicht and Tent~\cite{Nies.Schlicht.etal:19} developed the notion of coarse groups for closed subgroups of $\S$, which    first appeared in \cite{Kechris.Nies.etal:18}. The idea is to do algebra with approximations of elements,  rather than with the elements themselves. The  approximations are all the cosets of open subgroups (left or right cosets, this makes the same class). Open cosets form the domain of the coarse group, and the structure is equipped with the ternary relation $AB \sub C$. The authors in~\cite{Nies.Schlicht.etal:19} apply  the notion primarily for  the   class of    oligomorphic groups, but also the profinite groups. (Ivanov has pointed out that in that case a closely related structure was studied much earlier by Chatzidakis~\cite{Chatzidakis:98}.) 
 
 Here we give a different  approach to  coarse groups, which is particularly intended   for  the setting of   totally disconnected locally compact (t.d.l.c.) groups. General references for t.d.l.c.\ groups include Willis~\cite{Willis:94,Willis:17}. 
 
 
  In   \cite{Nies.Schlicht.etal:19} all open cosets of a topological group $G$ were considered, but  the analysis was  restricted to classes of groups $G$ which have   only countably many open subgroups. This is e.g.\ the case for   Roelcke precompact groups (for each open subgroup $U$ there is a  finite set $F$ such that $UFU= G$). Such groups are  in a sense opposite to the t.d.l.c.\ groups: the   intersection of those two `large" classes  consists merely of  the profinite groups.  However, a superclass of both has also been studied: locally Roelcke precompact groups.
  
  The coarse group $\+ M(G)$ of a t.d.l.c.\ group $G$ consists of the \emph{compact} open cosets of $G$.

  \subsection{Inductive groupoids, and inverse semigroups}
  A category is small if the objects form a set (rather than a proper class).
  Recall that a groupoid is a small category such that each morphism $A$  has an inverse, denoted $A^{-1}$.  A partially  ordered  groupoid is  a groupoid with a partial order $\sq$ on the set of morphisms (and therefore also on the objects, which are identified with their identity morphisms)  where    the functional and the order structure are compatible.
  An \emph{inductive groupoid} is a  partially ordered groupoid  such that the partial order  $\sq$ restricted to   the set of  neutral elements is a semilattice. See  Lawson~\cite[Section 4.1]{Lawson:98}. 
    
 A semigroup is  called \emph{regular} if for  each $a$ there is $b$, called the  inverse of $a$,  such that $aba=a $ and $bab= b$.  Inductive groupoids closely correspond to  \emph{inverse semigroups}. These are regular semigroups where the idempotents (elements $e$ such that $ee= e$) commute.   In particular, the (large) categories of  inductive groupoids and of inverse semigroups are isomorphic. 
 
 For instance, given an  inductive groupoid, to define the semigroup operation, simply let $AB = (A \mid V) (V \mid B)$, where $V$ is the meet of the right domain of $A$  and the left domain of $B$, and $\mid$ denotes restriction, given by axiom $A2(a)$ below.  See  Lawson~\cite{Lawson:98} for detail.
 
  The  representation theorem     due to Wagner and Preston (both 1954, independently) realizes  every inverse semigroup $S$ as an inverse semigroup of partial bijections on $S$. An element $a \in S$ becomes the  partial bijection $\tau_a \colon a^{-1} S \to S$ given by $t \mapsto at$. Clearly $\tau_b \circ \tau_a= \tau_{ba}$. If $S$ is a group this is just the   left Cayley representation. See  Lawson~\cite[Section 4.1 and 4.5]{Lawson:98}.

 \subsection{The coarse groupoid  of a topological group} Given a topological group for which the open subgroups form a nbhd basis of $1$,  an inductive groupoid is obtained  as follows. 
   \bi \item  Objects correspond to open subgroups. In the abstract setting  they will be  called $^*$subgroups. We use letters $U,V,W$ for them.  
  \item Morphisms correspond to open cosets.   Abstractly they are called $^*$cosets. We use letters $A,\ldots, E$ to denote them. 
    $A\colon U \to V$ means that $A$ is a right coset of $U$ and a left coset of $V$. In brief we often write ${}_U A_V$ for this. The usual  notation in the theory of groupoids is $U= \mathbf d(A)$ (for domain) and  $V = \mathbf r(A)$ (for range).
  \ei
  We will  treat   coarse groupoids axiomatically. We begin with the following.

  \n {\bf Notation and conventions.}    An object  $U$ will be  identified with the neutral  morphism $1_U$. So there are only morphisms, and  objects merely form  a convenient manner of speaking.     We write $RC(U)$ and $LC(U)$ for the sets of    right, resp.\  left $^*$cosets of $U$.   In formulas we also write $_U A$ to mean  that $A \in RC(A)$, and $A_U$ to mean that $A\in LC(U)$.     
  \medskip 
  
  \

 We axiomatically require  the usual properties defining  groupoids and partial orders. For ease of language we adjoin a least element $0$ to the partial order. We  require that in  the partial order $\sq$ on the objects (i.e., the $^*$subgroups),  any two elements $U,V$ have an infimum, denoted $U \wedge V$. By $A\perp B$ denote that $A \wedge B=0$  are incompatible.  If $M= \+ M(G)$ then $0$ is interpreted as the empty set.

 We have the following axioms connecting  the  groupoid  and partial order. (Keep in mind that we identify $U$ and $1_U$.)
  
  \medskip
  
 \begin{axiom}
  \begin{enumerate}
  
    \item[(A1)] If $A \sq B$ then $A^{-1} \sq B^{-1}$.
    
  \item[(A2)] Let $U \sq V$.

    \n ($\DA$)  If $_V B$ then $A \sq B$ for some $_U A$. 

\n   ($\uparrow$) If ${}_U A$ then $A \sq B$ for some $_V B$.

 \item[(A3)] if $A  B$ and $A'   B'$ are defined and 
  $A \sq A', B \sq B'$, then $A  B \sq A'   B'$.

  \item[(A4)] If $_UA$ and $_VB$ and $U \sq V$, then either $A\sq B$ or $A\perp B$.

  \item[(A5)] If $A \not \sq B$ then there is $C \sq A$ such that $C \perp  B$.
  
  \end{enumerate}
  
  \end{axiom}
\n   Remarks: 
  
  \n Note that Axioms (A1), (A2$\DA$) and (A3) are the usual axioms of ordered groupoids, OG1, OG3 and OG2 respectively  in Lawson~\cite[Section 4.1]{Lawson:98}, only the notation there is a bit different. 
  
  We have  $A_U$ iff $_U A^{-1}$ by the definitions, which implies that  the axioms mentioning right $^*$cosets  also holds for left $^*$cosets. See e.g.\  \cite[Section 4.1, Prop 3(6)]{Lawson:98} for a proof of the left coset version of (A2$\DA$) which Lawson calls (OG3$^*$).
  
 Axiom  (A2$\UA$) doesn't seem to occur   in the ordered groupoids literature. 
 Axiom  (A4) is special to the applications to topological groups we have in mind here.  It implies that different right  $^*$cosets of the same $^*$subgroup are disjoint.
  Axiom  (A5)   essentially says that the topology  is Hausdorff.  
  
  \subsection*{The axioms are satisfied for structures of  suitable open cosets}
  
  In the following,  $G$ is a topological group as above with countably many open subgroups, or $G$ is a t.d.l.c.\ group.
  Let $\+ M(G)$ denote the coarse groupoid:  the $^*$subgroups are the open subgroups in the former case, and the compact open subgroups in the t.d.l.c.\ case. The morphisms are the (compact) open cosets. We have $\morn A U V$ if $A$ is a right coset of $U$ and a left coset of $V$. Recall that in brief we write $ _U A_V$ for this.   It is easily seen that the axioms above hold.  To show that $\+M(G)$  (with 0 interpreted as the empty set)  is a lower semilattice,  suppose that   $x \in aU \cap bV$ for subgroups $U,V$, then $xU = aU$ and $xV = bV$. Let $W = U \cap V$. Then $aU \cap bV= x W$.   Claim~\ref{cl: cap_coarse} below shows that this argument works in the  general axiomatic setting.
  
  Note that  $\ex \morn AUV$ iff $U$ and $V$ are conjugate in $G$. In this case, there is $a\in G$ such that $Ua= A= aV$.

 \subsection*{Some   consequences of the axioms} \
 
 \n
First we check that the ordering relation  of morphisms carries over to   their left and right domains.
\begin{claim} \label{cl:incl subgr} \ 

\n Suppose $\morn  A  {U_0}  {U_1}$, $\morn  B  {V_0}  {V_1}$ and $A \sq B$. Then $U_i \sq V_i$ for $i= 0,1$. \end{claim}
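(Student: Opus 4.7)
The plan is to express each of the four ${}^*$subgroups $U_0, U_1, V_0, V_1$ as a composition of a morphism with its inverse, and then apply the order-compatibility axioms (A1) and (A3) in both directions. Under the identification $U = 1_U$, the standard groupoid identities give $A^{-1} A = 1_{\mathbf d(A)} = U_0$ and $A A^{-1} = 1_{\mathbf r(A)} = U_1$, and analogously $B^{-1} B = V_0$ and $B B^{-1} = V_1$. So the two inclusions we want are really just statements about the compositions $A^{-1}A, B^{-1}B$ and $AA^{-1}, BB^{-1}$.

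First, I would apply (A1) to $A \sq B$ to obtain $A^{-1} \sq B^{-1}$. Next, the compositions $A^{-1} A$ and $B^{-1} B$ are both defined in the groupoid, since $\mathbf r(A^{-1}) = U_0 = \mathbf d(A)$ and analogously $\mathbf r(B^{-1}) = V_0 = \mathbf d(B)$. Axiom (A3) applied to $A^{-1} \sq B^{-1}$ together with $A \sq B$ then yields $A^{-1} A \sq B^{-1} B$, which under the identification $U = 1_U$ reads $U_0 \sq V_0$. Symmetrically, $A A^{-1}$ and $B B^{-1}$ are defined, and a second application of (A3), this time with $A \sq B$ on the left and $A^{-1} \sq B^{-1}$ on the right, gives $A A^{-1} \sq B B^{-1}$, i.e., $U_1 \sq V_1$.

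The only point that requires care is the bookkeeping of the convention $_U A_V$ (so that $\mathbf d(A) = U$ and $\mathbf r(A) = V$), to make sure the two products land in the intended identity morphisms and that composability is verified on the correct side. Beyond that there is no genuine obstacle: the claim reduces to a one-line consequence of (A3) in each direction once the groupoid identities are unpacked, and in particular the more topologically flavoured axioms (A2), (A4) and (A5) play no role here.
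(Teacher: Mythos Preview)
Your argument is correct and matches the paper's proof exactly: apply (A1) to get $A^{-1}\sq B^{-1}$, then (A3) twice to obtain $AA^{-1}\sq BB^{-1}$ and $A^{-1}A\sq B^{-1}B$. The only slip is a convention mismatch: the paper composes in diagrammatic order (so that $AB$ is defined when $\mathbf r(A)=\mathbf d(B)$), which gives $AA^{-1}=U_0$ and $A^{-1}A=U_1$, the reverse of what you wrote; since you treat both cases symmetrically and explicitly flag the bookkeeping, this does not affect the conclusion.
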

\n To verify this: by (A1) we have $A^{-1}\sq B^{-1}$. Then by (A3) and identifying $U$ with $1_U$, we have $U_0= A A^{-1} \sq B B^{-1}=V_0$. Similarly, we've got $U_1 \sq V_1$.

\begin{claim} \label{axiom existence of full filters} 
\

\n For each   $A\in M$ and each $^*$subgroup $U$, there are  a $^*$subgroup $V\sqsubseteq U$ 
and a left $^*$coset $B$ of $V$ 
such that $B\sqsubseteq A$. A similar fact holds for right $^*$cosets. 
\end{claim}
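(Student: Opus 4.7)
The plan is to obtain $B$ by first picking the right $^*$subgroup $V$ using the semilattice meet on objects, and then producing $B$ via axiom (A2$\DA$).

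Write $\morn{A}{U_0}{U_1}$, so $A$ is a right $^*$coset of $U_0$ and a left $^*$coset of $U_1$, i.e.\ $A_{U_1}$. Since the objects form a lower semilattice, I would set $V := U \wedge U_1$, so that $V \sq U$ and $V \sq U_1$ automatically. Now I want to pull $A$ down to a left $^*$coset of this smaller $V$.

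The key tool is (A2$\DA$) applied in its left-coset form: given $V \sq U_1$ and $A_{U_1}$, there exists $B_V$ with $B \sq A$. The paper already observes that because $A_U \iff {}_U A^{-1}$, one obtains the left-coset versions of all axioms from the right-coset versions by conjugating with (A1); so no extra work is needed there (compare \cite[Section 4.1, Prop 3(6)]{Lawson:98}, which handles exactly the analogous OG3$^*$). Combining the two inputs yields a left $^*$coset $B$ of $V$ with $B \sq A$ and $V \sq U$, as required. The final sentence of the claim, asking for a right $^*$coset $B'$ of some $V' \sq U$ with $B' \sq A$, is handled symmetrically by taking $V' := U \wedge U_0$ and applying (A2$\DA$) directly, since $A$ is a right $^*$coset of $U_0$.

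The only real content to check is that (A2$\DA$) indeed has a left-coset version under the stated axioms, and this is routine once one uses (A1) to pass between $A$ and $A^{-1}$. I do not expect any obstacle; the claim is a straightforward consequence of the semilattice assumption on $^*$subgroups together with the restriction axiom (A2$\DA$), and it is parallel in spirit to the observation used just above in \Cref{cl:incl subgr} that the order on morphisms descends to the order on their domains.
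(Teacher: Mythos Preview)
Your proof is correct and follows the same approach as the paper: one takes the meet of $U$ with the relevant domain of $A$ (the paper calls it $W$, you call it $U_1$) and then applies the left-coset form of (A2$\DA$). Your treatment is slightly more explicit in separating out the right-coset case, but the argument is identical.
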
 
 \n To see this, suppose that   $A $ is left $^*$coset of $W$. Let $V= W \wedge U$. By Axiom~(A2$\DA$) there is $B\sqsubseteq A$ such that $B$ is left $^*$coset of $V$, as required.

 The following  holds more generally in ordered  groupoids.   
 \begin{claim}[\cite{Lawson:98}, Section 4.1, Prop 3(5)] \
 
 \n  If $C \sq AB$ then there are $A' \sq A$ and $B' \sq B$ such that $C= A'B'$. \end{claim}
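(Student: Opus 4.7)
The plan is to construct $A'$ and $B'$ explicitly by first restricting $B$ on its right so that its range matches $\mathbf{r}(C)$, then ``dividing'' $C$ by this restriction via the groupoid operation. Let $\morn A U V$ and $\morn B V W$, so $\morn {AB} U W$. By Claim~\ref{cl:incl subgr}, the hypothesis $C \sq AB$ forces $\mathbf{r}(C) \sq W$; write $W_0 = \mathbf{r}(C)$.

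The first step is to produce $B' \sq B$ whose range is exactly $W_0$. Since $B$ is a left $^*$coset of $W$ and $W_0 \sq W$, the left-coset analogue of (A2$\DA$), obtained by applying (A2$\DA$) to $B^{-1}$ and using (A1), and noted in the excerpt as \cite[Section 4.1, Prop 3(6)]{Lawson:98}, furnishes such a $B'$. Now set $A' := C B'^{-1}$. This product is defined because $\mathbf{r}(C) = W_0 = \mathbf{r}(B') = \mathbf{d}(B'^{-1})$.

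It remains to verify $A' \sq A$ and $A'B' = C$. For the first, (A1) gives $B'^{-1} \sq B^{-1}$, and both $CB'^{-1}$ and $(AB)B^{-1}$ are defined; applying (A3) to $C \sq AB$ and $B'^{-1} \sq B^{-1}$ yields $A' = CB'^{-1} \sq (AB)B^{-1}$. Now $(AB)B^{-1} = A(BB^{-1}) = A \cdot V = A$ by associativity in the groupoid and $\mathbf{r}(A) = V$. For the second, $A'B' = (CB'^{-1})B' = C(B'^{-1}B') = C \cdot W_0 = C$, since $W_0 = \mathbf{r}(C)$ acts as a right identity on $C$. No real obstacle is anticipated; axioms (A4) and (A5) play no role here, and the argument parallels the standard ordered-groupoid proof in \cite{Lawson:98}.
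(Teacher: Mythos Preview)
Your argument is correct. The paper itself does not give a proof of this claim; it merely cites Lawson~\cite[Section 4.1, Prop 3(5)]{Lawson:98} and notes that the fact holds in general ordered groupoids. Your construction --- corestricting $B$ to $W_0=\mathbf r(C)$ and then setting $A'=CB'^{-1}$ --- is precisely the standard ordered-groupoid argument, relying only on (A1), (A2$\DA$) (in its left-coset form) and (A3), so it matches the intended reference.
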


 
 Next we show that each left $^*$coset of a $^*$subgroup $V$ is given by the left $^*$cosets of a   $^*$subgroup $U$  it contains. (In a sense it is the ``union" of these cosets.)
 \begin{claim} \
 
 \n  Suppose $U \sq V$. If $B_V \neq C_V$ then there is $A_U \sq B$ such that $A\perp C$. \end{claim}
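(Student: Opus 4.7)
My plan is to show first that $B$ and $C$ are themselves perpendicular, and then to descend to a left $^*$coset $A$ of $U$ below $B$ using Axiom (A2$\DA$). Throughout I will freely use the left-coset versions of Axioms (A2$\DA$) and (A4), which follow from their right-coset counterparts together with (A1), exactly as in the discussion just after the axiom list and as already exploited in the proof of Claim~\ref{axiom existence of full filters}.

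The core step is the observation that $B \perp C$. Applying the left-coset form of (A4) with the trivial inclusion $V \sq V$ to the pair $(B,C)$ gives $B \sq C$ or $B \perp C$, and symmetrically applied to $(C,B)$ gives $C \sq B$ or $C \perp B$. If $B \sq C$ and $C \sq B$, antisymmetry yields $B = C$, contradicting the hypothesis. If $B \sq C$ and $C \perp B$, then $B$ is itself a common lower bound of $B$ and $C$, forcing $B \sq B \wedge C = 0$ and hence $B = 0$, contradicting the fact that a $^*$coset is a non-zero morphism. The symmetric combination $C \sq B$ with $B \perp C$ is excluded for the same reason. The only surviving case is $B \perp C$.

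To finish, apply the left-coset form of (A2$\DA$) to $U \sq V$ and the left $^*$coset $B$ of $V$ to obtain a left $^*$coset $A$ of $U$ with $A \sq B$. Since any lower bound of $A$ and $C$ is then also a lower bound of $B$ and $C$, we get $A \wedge C \sq B \wedge C = 0$, i.e., $A \perp C$, as required.

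The only step requiring any genuine argument is the derivation of $B \perp C$, which is a short four-case analysis using antisymmetry of $\sq$ and the fact that a non-zero morphism cannot lie below $0$. Everything else amounts to transporting the axioms to the left-coset setting and unpacking the meet.
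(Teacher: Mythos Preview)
Your proof is correct and in fact cleaner than the paper's. The paper's argument invokes Axiom~(A5): from $B\not\sq C$ it produces some $D_W\sq B$ with $D\perp C$, then descends via (A2$\DA$) to a left $^*$coset of $W\wedge U$, ascends again via (A2$\UA$) to a left $^*$coset $A$ of $U$, and finally uses (A4) twice to check $A\sq B$ and $A\perp C$. You bypass all of this by noting (from the left-coset form of (A4) with $V\sq V$, exactly the ``distinct cosets of the same subgroup are disjoint'' remark made right after the axiom list) that already $B\perp C$, after which a single application of (A2$\DA$) gives $A_U\sq B$, and $A\perp C$ is immediate. So your route uses only (A4) and (A2$\DA$), avoiding both (A5) and (A2$\UA$).

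One cosmetic remark: in your last line you write $A\wedge C\sq B\wedge C=0$. The equality $B\wedge C=0$ is just the definition of $B\perp C$, but the existence of $A\wedge C$ as an element of the poset is not yet established at this point in the paper (general meets of $^*$cosets are only handled in the later Claim on $A_U\wedge B_V$). This is harmless, since your actual argument only needs that every common lower bound of $A$ and $C$ is a common lower bound of $B$ and $C$; you might phrase it that way to avoid appealing to a meet that has not been constructed.
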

 \n To verify this, we may suppose that $B \not \sq C$. By Axiom (A5) there a $^*$subgroup $W$ and   $D_W \sq B$ such that $D \perp C$. Let $U'= W \wedge U$. Let $E_{U'} \sq D$ by (A2$\DA$). There is $A_U \sqsupseteq E$ by (A2$\UA$). Since $A\perp B$ fails (because of $E$) we have $A \sq B$ by (A4). However,   $A \sq C$ would imply $E \sq C $ and hence contradict $D \perp C$. So $A \perp C$ by (A4) again.
 
 \begin{claim} \label{cl: cap_coarse} Suppose $A_U   \wedge B_V\neq  0$. Let $W= U \cap V$. Then $A \wedge B$ is the unique left $^*$coset of $W$ contained in $A$ and $B$. \end{claim}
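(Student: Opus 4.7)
The plan is to construct a left $^*$coset $C$ of $W$ with $C\sq A$ and $C\sq B$, verify that $C$ equals the semilattice meet $A\wedge B$, and derive uniqueness from axiom (A4) together with antisymmetry of $\sq$.

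First I would produce $C$. Since $A\wedge B\neq 0$, pick a nonzero $D$ with $D\sq A$ and $D\sq B$. Claim~\ref{axiom existence of full filters} (left version) applied to $D$ and $W$ yields a left $^*$coset $C_0$ of some $^*$subgroup $W_0\sq W$ with $C_0\sq D$. The left-coset analogue of axiom (A2$\UA$) then lifts $C_0$ to a left $^*$coset $C$ of $W$ with $C_0\sq C$. Axiom (A4), applied to $C$ and $A$ with $W\sq U$, gives $C\sq A$ or $C\perp A$; the second alternative fails because $C_0$ is a nonzero common lower bound of $C$ and $A$. Hence $C\sq A$, and symmetrically $C\sq B$.

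Next I would show $C=A\wedge B$. Let $E\neq 0$ be any lower bound of $\{A,B\}$. By Claim~\ref{cl:incl subgr}, the left-domain of $E$ is $\sq U$ and $\sq V$, hence $\sq W$. Axiom (A4) applied to $E$ and $C$ gives $E\sq C$ or $E\perp C$. To rule out the latter, repeat the construction of the first step with $E$ in place of $D$, obtaining a left $^*$coset $C'$ of $W$ with $C'\sq A$, $C'\sq B$, and a nonzero common lower bound with $E$. Both $C$ and $C'$ are nonzero left $^*$cosets of $W$, so axiom (A4) applied symmetrically (with $U=V=W$) together with the antisymmetry of $\sq$ forces $C=C'$; hence $C$ and $E$ share a nonzero lower bound, contradicting $E\perp C$. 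So $E\sq C$, which gives $C=A\wedge B$. Uniqueness is then immediate: any left $^*$coset of $W$ below both $A$ and $B$ is $\sq C$, and by the same application of (A4) and antisymmetry must equal $C$.

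The main subtlety I anticipate is the cancellation-type uniqueness for left $^*$cosets of $W$: a nonzero left $^*$coset of $W$ cannot lie strictly below another one. This needs no new axiom, but must be deduced carefully. If $C_1\sq C_2$ are both nonzero left $^*$cosets of $W$, then (A4) applied in the reverse direction yields $C_2\sq C_1$ or $C_2\perp C_1$; the second alternative fails because $C_1$ is itself a nonzero lower bound of the pair $\{C_1,C_2\}$, so antisymmetry of $\sq$ delivers $C_1=C_2$. Once this cancellation is in hand, the three steps above fit together cleanly.
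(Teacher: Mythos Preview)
Your Step~1 is fine, but Step~2 has a real gap. You assert that since $C,C'\in LC(W)$, ``axiom~(A4) applied symmetrically (with $U=V=W$) together with antisymmetry of $\sq$ forces $C=C'$''. That is not what (A4) gives you: with $U=V=W$ it yields only the dichotomy $C\sq C'$ or $C\perp C'$ (and symmetrically), so the conclusion is $C=C'$ \emph{or} $C\perp C'$. Distinct left $^*$cosets of the same $^*$subgroup certainly exist, and by~(A4) they are disjoint; nothing you have written rules out $C\perp C'$. Your cancellation paragraph at the end is correct but irrelevant here, since it assumes $C_1\sq C_2$ as a hypothesis, which you have not established for $C$ and $C'$.

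The repair is to bypass the detour through $C'$ entirely and argue directly with the meet, as the paper does. Your Step~1 already produces $C\in LC(W)$ with $C\sq A$ and $C\sq B$, hence $C\sq A\wedge B$. Now $A\wedge B$ is a left $^*$coset of some $W''$; from $A\wedge B\sq A$ and $A\wedge B\sq B$ and Claim~\ref{cl:incl subgr} you get $W''\sq U$ and $W''\sq V$, so $W''\sq W$. But from $C\sq A\wedge B$ the same claim gives $W\sq W''$. Hence $W''=W$, so $A\wedge B\in LC(W)$, and your own cancellation lemma then yields $C=A\wedge B$. Uniqueness follows immediately. The paper's proof is essentially this argument with the roles reversed: it starts from $A\wedge B$, lifts it via (A2$\uparrow$) to some $D\in LC(W)$, and (implicitly, via~(A4)) concludes $D=A\wedge B$. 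Either way, once you exploit that $A\wedge B$ is available as an element of the semilattice, the construction of a second witness $C'$ and the attempt to match it with $C$ are unnecessary.
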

 
 \n If  $C_{W'}\sq A,B$, then $W' \sq W$ by Claim~\ref{cl:incl subgr} and definition of $W$. So by (A2$\UA$) there is $D_W$ such that $C \sq D$. Letting $C= A \wedge B$, we see that $A\wedge B$ is a left $^*$coset of $W$. If any left $^*$coset of $W$ is contained in $A,B$ it equals $C$ by (A4).
 
\subsection*{Normal $^*$subgroups}

Recall that we  write $LC(U)$ and $RC(U)$ for the sets of   left, resp.\ right,  $^*$cosets of $U$.
We say that $^*$subgroups $U,V$ are \emph{conjugate} if $\ex A  \colon U \to V$, or in other words, $RC(U) \cap LC(V) \neq \ES$. In $\+M(G)$ this replicates the usual meaning of conjugacy. For the slightly nontrivial direction,  if $A = Ua = bV$ then $a^{-1} Ua= a^{-1} b V$. This is a subgroup, so $a^{-1} b \in V$, and hence $U^a= V$. 
The axioms   of   groupoids   imply  that conjugacy is an equivalence relation. 

Normal $^*$subgroups $V$ are the ones only conjugate to themselves: for each $B \in RC(V)$ we have $B^{-1}V B= V$. This is equivalent to $VB = BV$ for each such $B$, or equivalently  defined   by the condition $LC(V)= RC(V)$. In category language,  all morphisms with left domain $V$ also have right domain $V$, and vice versa. So there is a natural group operation on $RC(V)$. 

A rather trivial fact from group theory becomes more demanding in the axiomatic setting of coarse groupoids.  
 \begin{prop} \label{prop:normal subgroup}  Suppose that $U$ is a $^*$subgroup such that $RC(U)$,   or equivalently  $LC(U)$,  is finite. Then there is a normal $^*$subgroup $N \sq U$.  \end{prop}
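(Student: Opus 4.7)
The plan is to mimic the classical construction of the core of $U$: take $N$ to be the meet of all conjugates of $U$. Concretely, let $X := \{\mathbf{r}(A) : A \in RC(U)\}$, the set of $^*$subgroups conjugate to $U$. Since $RC(U)$ is finite so is $X$, and $U = \mathbf{r}(1_U) \in X$. Set $N := \bigwedge_{V \in X} V$, so $N \sq U$. To prove normality, I would fix a morphism $B \in RC(N)$ with $W := \mathbf{r}(B)$, and show $W = N$.

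The main device is a ``conjugation by $B$'' map $\phi_B$. For each $^*$subgroup $V \sqsupseteq N$, axiom (A2$\UA$) yields an extension $B_V \sqsupseteq B$ with $\mathbf{d}(B_V) = V$, and this $B_V$ is unique because distinct right $^*$cosets of $V$ are incompatible (from two applications of (A4) combined with antisymmetry). Set $\phi_B(V) := \mathbf{r}(B_V)$. For $V \in X$, composing a witness $A \colon U \to V$ with $B_V$ produces a morphism $U \to \phi_B(V)$, so $\phi_B(V) \in X$; thus $\phi_B$ restricts to a map $X \to X$.

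Two properties of $\phi_B$ finish the argument. First, $\phi_B|_X$ is injective: if $\phi_B(V) = \phi_B(V')$, then $B_V, B_{V'}$ are both left $^*$cosets of the same $^*$subgroup $W'$, both $\sqsupseteq B \neq 0$; since distinct left $^*$cosets of the same $^*$subgroup are incompatible (via the left-coset analogue of (A4), derived from (A4) through (A1) and the fact that $C \perp D \Leftrightarrow C^{-1} \perp D^{-1}$), we get $B_V = B_{V'}$, whence $V = \mathbf{d}(B_V) = \mathbf{d}(B_{V'}) = V'$. As $X$ is finite, $\phi_B|_X$ is a bijection. Second, $\phi_B$ preserves binary (hence finite) meets on $\{V : V \sqsupseteq N\}$: for $V_1, V_2 \sqsupseteq N$, the meet $B_{V_1} \wedge B_{V_2}$ is non-zero and belongs to $RC(V_1 \wedge V_2)$ by the right-coset form of \Cref{cl: cap_coarse}, so by uniqueness of extensions $B_{V_1 \wedge V_2} = B_{V_1} \wedge B_{V_2}$; the left-coset form of \Cref{cl: cap_coarse} then reads off its right domain as $\mathbf{r}(B_{V_1}) \wedge \mathbf{r}(B_{V_2}) = \phi_B(V_1) \wedge \phi_B(V_2)$.

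Combining these: $B$ itself is an extension of $B$ in $RC(N)$, so by uniqueness $B_N = B$ and $\phi_B(N) = W$. Meet-preservation together with the bijectivity of $\phi_B|_X$ then give
\[ W = \phi_B(N) = \phi_B\!\Big(\bigwedge_{V \in X} V\Big) = \bigwedge_{V \in X} \phi_B(V) = \bigwedge_{V \in X} V = N. \]
The chief obstacle is the meet-preservation step, which needs both forms of \Cref{cl: cap_coarse} simultaneously: the right-coset form to identify $B_{V_1} \wedge B_{V_2}$ as the unique extension $B_{V_1 \wedge V_2}$, and the left-coset form to compute its right domain. This distributivity of $\phi_B$ over meets is the axiomatic shadow of the classical identity $g^{-1}\!\big(\bigcap_i H_i\big) g = \bigcap_i g^{-1} H_i g$, which is exactly what makes the classical core normal.
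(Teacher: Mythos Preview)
Your proof is correct and shares the paper's core strategy: define $N$ as the meet of the conjugacy class $X$ (the paper's $\mathcal D$) of $U$, and show that conjugation by a fixed right $^*$coset of $N$ permutes $X$ (your $\phi_B$ is exactly the paper's $f_C$, and the injectivity arguments coincide). The difference is only in the endgame. From the bijection the paper extracts merely that $N':=C^{-1}NC=\mathbf r(C)$ lies below each $f_C(W)$ and hence below their meet $N$; it then needs a separate back-and-forth via (A2$\uparrow$) and (A1) to upgrade $\mathbf r(C)\sqsubseteq N$ to $C\in LC(N)$. Your observation that $\phi_B$ preserves finite meets---obtained by applying the right- and left-coset forms of Claim~\ref{cl: cap_coarse} to the same element $B_{V_1}\wedge B_{V_2}$---yields $\mathbf r(B)=\phi_B(N)=N$ in one stroke and is the cleaner finish; it is precisely the axiomatic version of $g^{-1}(\bigcap_i H_i)g=\bigcap_i g^{-1}H_ig$ that the paper's argument circumvents.
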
 
 
In usual topological group theory, the argument is as follows.  Since $U$ is a subgroup of $G$ of finite index, the conjugacy class of $U$ is finite. Let $N = \bigcap_{g\in G}  U^g$. This is a finite intersection and hence defines an open subgroup, and   $N^h = \bigcup_g U^{gh}= N$ for each $N$. So $N$ is normal.

 \begin{proof} Let $\+ D$ be the ``conjugacy class" of $U$, namely
 
  \bc $\+ D= \{ B^{-1}U B \colon B \in RC(U)\}$. \ec
  The  hypothesis implies that  $\+ D$ is finite, so let $N$ be the meet of all its members. Then $N \sq U$. We show that $N$ is normal as required. 
  
  Given  $C\in RC(N)$, we will   show that $C \in LC(N)$.  
  We define a bijection $f_C \colon \+ D \to \+ D$ by 
  \bc $f_C(W)= D^{-1} W D$ where $D \in RC(W)$ and $C \sq D$; \ec
  note that $D$ exists and is unique because $N \sq W$, so $f_C$ is well-defined. 
  
 To verify that  $f_C$ is bijection,  since $\+ D$ is finite it  suffices to show that $f_C$ is 1-1. Suppose $f_C(W_0)= f_C(W_1)=: V$. Then $D_0^{-1} W_0D_0= D_1^{-1} W_1D_1 = V$ where $C \sq D_0, D_1$ and $D_i \in RC(W_i)$. Since   $D_0, D_1  \in LC(V)$, and $D_0, D_1$ are not disjoint, this implies $D_0 = D_1$ and hence $W_0= W_1$. 
  
We have $N' := C^{-1} N C \sq f_C(W)$ for each $W \in \+ D$ using (A1) and (A3). So,  since $f_C$ is onto, $N' \sq N$. 
  
 Since  $C \in LC(N')$ and $N' \sq N$,  there is $D \in LC(N) $ such that $C \sq D$ by (A2$\UA$). 
  Then $C':= D^{-1} \in RC(N)$, so $C' \sq D' $ for some $D' \in LC(N)$ by the argument   above. Then $D \sq E:= (D')^{-1} \in RC(N)$ by (A1). So $C\sq E$ and both are in $RC(N)$. Hence $C=D= E$ by (A4).  This shows that  $LC(N)\sub RC(N)$, hence also $RC(N) \sub LC(N)$ by taking inverses. So $RC(N)= LC(N)$   as required. 
  \end{proof} 
     \subsection*{The   filter group associated with a coarse groupoid}
   We've seen how to turn a topological group into  a coarse groupoid. Now we go the opposite way. This is adapted from~\cite{Nies.Schlicht.etal:19}. An alternative, possibly easier way to do this is to take the topological group of ``left automorphisms" of $M$, as detailed in Prop.\ \ref{prop: left autom} below.

   Let $M$ be a coarse groupoid. 
   A filter on a p.o.\ is a proper subset that is downward directed and upward closed. For  $(M, \sq)$, a filter is thus a subset $x$ that is upward closed, and $A\wedge B$ exists and is in $x$,  for any $A,B \in x$.
   
   \begin{definition} \label{df:fullfilter} A~\emph{full filter} is a  filter $x$ on the partial order $(M, \sq)$ such that for each $^*$subgroup $U\in M$, there is a left $^*$coset and a right $^*$coset in~$x$. Note that these  $^*$coset are unique by (A4).
 $\+ F(M)$ denotes the set of full filters. We use variables  $x,y,z$ for full filters.  \end{definition}
 
 \begin{claim} \label{cl:exist filter}For each $A$ there is a full filter $x$ such that $A \in x$. \end{claim}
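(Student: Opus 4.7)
The plan is to build $x$ as the upward closure of a descending chain $A = A_0 \sqsupseteq A_1 \sqsupseteq \cdots$ of $^*$cosets whose left and right domains eventually shrink below every prescribed $^*$subgroup. Fix an enumeration $(W_n)_{n\in\NN}$ of the $^*$subgroups (assuming countability; otherwise one uses Zorn's lemma on the poset of descending chains of cosets below $A$, ordered by end-extension).

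The inductive step, given $A_{n-1}$ with $A_{n-1}\in RC(U)\cap LC(V)$, is to perform two successive restrictions. First, applying the left-coset version of (A2$\DA$) with $V\wedge W_n\sq V$, pick $A'\in LC(V\wedge W_n)$ with $A'\sq A_{n-1}$; by Claim~\ref{cl:incl subgr} the right domain $U'$ of $A'$ satisfies $U'\sq U$. Second, applying (A2$\DA$) with $U'\wedge W_n\sq U'$, pick $A_n\in RC(U'\wedge W_n)$ with $A_n\sq A'$; by Claim~\ref{cl:incl subgr} again the left domain $V''$ of $A_n$ satisfies $V''\sq V\wedge W_n$. Thus both domains of $A_n$ are $\sq W_n$.

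Let $x=\{C\in M : A_n \sq C \text{ for some } n\}$. Upward closure is immediate. For the meet condition, any $C,D\in x$ lie above some common $A_n$, so $C$ and $D$ are compatible; by Claim~\ref{cl: cap_coarse} the meet $C\wedge D$ exists and still contains $A_n$, so $C\wedge D\in x$. Since every $A_n\neq 0$, the filter $x$ is proper, and $A=A_0\in x$. Fullness follows from (A2$\UA$) applied to $A_n$: the left domain of $A_n$ is $\sq W_n$, so there is $B_n\in LC(W_n)$ with $A_n\sq B_n$, whence $B_n\in x$; similarly the right-coset version of (A2$\UA$) yields a right $^*$coset of $W_n$ in $x$.

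The main obstacle is the two-sided nature of fullness: restricting a coset on the right-coset side generally changes its left domain as well, so a single application of (A2$\DA$) is not enough to control both domains. The two-step refinement above handles this precisely because restriction is monotone on both domains (Claim~\ref{cl:incl subgr}), so the first restriction on the range is not undone by the second restriction on the domain. Everything else is a bookkeeping verification that the axioms give us exactly what the analogous computation in $\+M(G)$ would give.
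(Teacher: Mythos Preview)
Your proof is correct and is essentially the same as the paper's. The paper's proof is the single line ``This follows by iterated applications of Claim~\ref{axiom existence of full filters}'', and you have spelled out exactly this iteration: at stage $n$ you apply (A2$\DA$) twice (once on each side, which is precisely two invocations of Claim~\ref{axiom existence of full filters}) to push both domains of $A_{n-1}$ below $W_n$, then take the upward closure and use (A2$\UA$) to witness fullness.

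One small caveat: your parenthetical Zorn's-lemma remark for the uncountable case does not work as stated. A maximal descending chain in your poset need not have a least element, so given a $^*$subgroup $U$ that the chain has missed you have no single coset to which you can apply Claim~\ref{axiom existence of full filters} to extend the chain. The paper's one-liner tacitly assumes countability as well (and the explicit hypothesis ``Suppose $M$ is countable'' appears shortly afterward), so this is not a gap relative to the paper, but you should not claim the uncountable case is handled.
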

\n  This follows by iterated applications of  Claim~\ref{axiom existence of full filters}. 
 
 We begin with the topology on $\+ F(M)$.
\begin{definition}[Topology on the set of  full filters]  \label{def:topo}  \  \\  As in  \cite{Nies.Schlicht.etal:19} we define a  topology on $\+ F(M)$ by declaring  as subbasic        the  open sets 
 
\bc $\hat{A}=\{x\in \+ F(M)\colon A\in x\}$ \ec where  $A\in M$.  These sets    form a base since  filters are directed.  \end{definition}
Suppose $M$ is countable. The following improves  \cite[Prop 2.5]{Nies.Schlicht.etal:19} that $\+ F(M)$ a totally disconnected Polish space.   
\begin{prop}  There is a homeomorphic  embedding taking $\+ F(M)$ to a closed subset of Baire space. \end{prop}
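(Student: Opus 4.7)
My plan is to embed $\+F(M)$ into a countable product of countable discrete spaces by reading off, for each $^*$subgroup, the unique left and right $^*$coset present in each full filter. Enumerate the $^*$subgroups of $M$ as $U_0, U_1, \ldots$ and set $X = \prod_n LC(U_n) \times \prod_n RC(U_n)$ with discrete factor topologies. Since all factors are countable, $X$ is canonically homeomorphic to a closed subspace of Baire space. Define $\phi \colon \+F(M) \to X$ by $\phi(x) = (L_x, R_x)$, where $L_x(U_n)$ is the unique element of $x \cap LC(U_n)$ and $R_x(U_n)$ is the unique element of $x \cap RC(U_n)$. Existence is built into Definition~\ref{df:fullfilter}; uniqueness follows from two applications of axiom (A4) with $U_n \sq U_n$ together with antisymmetry of $\sq$: if $A, B \in x \cap LC(U_n)$ then $A \wedge B \in x$ is nonzero, so $A^{-1} \wedge B^{-1}$ is a nonzero element of $RC(U_n)$, and the two directions of (A4) force $A^{-1} = B^{-1}$.

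Next I would verify that $\phi$ is a homeomorphism onto its image. The preimage under $\phi$ of the subbasic clopen set $\{(L, R) : L(U_n) = A\}$ is $\hat A \cap \+F(M)$, open by Definition~\ref{def:topo}, and the analogous statement holds for right-coset coordinates; so $\phi$ is continuous. Injectivity holds because every $A \in x$ lies in $LC(\rdom(A))$ and hence equals $L_x(\rdom(A))$, so $x$ is already recoverable from $L_x$. The same correspondence between subbasic opens shows $\phi$ is open onto its image.

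The heart of the argument is to identify $\phi(\+F(M))$ as the closed subset $Y \subseteq X$ consisting of the pairs $(L,R)$ satisfying (i) $U_m \sq U_n \Rightarrow L(U_m) \sq L(U_n)$ and $R(U_m) \sq R(U_n)$, and (ii) $L(U_m) \wedge R(U_n) \neq 0$ for all $m,n$. Each condition is a countable conjunction of clopen conditions on $X$, so $Y$ is closed. The containment $\phi(\+F(M)) \subseteq Y$ uses axiom (A2$\UA$): given $U_m \sq U_n$, $L_x(U_m)$ lies below some element of $LC(U_n)$ which must be in $x$ by upward closure, hence equals $L_x(U_n)$; the symmetric statement handles $R_x$, while directedness of $x$ yields (ii). For the reverse containment I would take $x$ to be the upward closure in $(M, \sq)$ of the generating set $\{L(U_n), R(U_n) : n \in \NN\}$ and verify that $x$ is a full filter with $\phi(x) = (L,R)$.

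The main technical obstacle is directedness of $x$, which reduces to checking that any two generators have a nonzero meet. Two left (resp.\ right) generators are handled by monotonicity, since $L(U_m \wedge U_n)$ (resp.\ $R(U_m \wedge U_n)$) is a common lower bound by (i); the mixed case $L(U_m), R(U_n)$ is exactly what condition (ii) provides. This is the crucial role of (ii): if one tried to embed instead into $\prod_n LC(U_n)$ using only monotone $L$, the resulting filter's right-coset content would not be controlled, and the image would only be $F_\sigma$ rather than closed. Once directedness is established, properness and upward closure are immediate, fullness follows from $L(U_n) \in x \cap LC(U_n)$ and $R(U_n) \in x \cap RC(U_n)$, and $\phi(x) = (L,R)$ follows from the uniqueness already proved.
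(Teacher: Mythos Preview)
Your approach is sound and different from the paper's, but the directedness step contains a genuine gap. With $x$ the upward closure of $S=\{L(U_n),R(U_n):n\in\NN\}$, directedness of $x$ does \emph{not} reduce to ``any two generators have a nonzero meet'': the upward closure of a set $S$ in a meet-semilattice is a filter only when $S$ is itself downward directed, i.e., any two elements of $S$ have a common lower bound \emph{in~$S$}. Your left--left and right--right cases do exhibit such a lower bound, but in the mixed case condition~(ii) only asserts $L(U_m)\wedge R(U_n)\ne 0$, not that some generator lies below both. Here is the missing argument. Set $W=U_m\wedge U_n$ and $C=L(W)\wedge R(W)\ne 0$ by~(ii); let $V=\rdom(C)$, so $C\in LC(V)$ and $V\sq W$ by Claim~\ref{cl:incl subgr}. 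By~(i), $L(V)\sq L(W)$, and by~(ii) the meet $D:=L(V)\wedge R(W)$ is nonzero; since $D\sq L(W)\wedge R(W)=C$, the elements $L(V)$ and $C$ of $LC(V)$ are not disjoint, whence (A4) forces $L(V)=C$. Thus $L(V)\sq L(U_m)$ and $L(V)\sq R(U_n)$, supplying the required generator. With this repair your proof goes through.

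The paper's route is quite different: it fixes a single cofinal descending chain $(U_n)$ of $^*$subgroups and sends a full filter to a point of Baire space by alternately recording its left and right $^*$cosets along that chain; closedness then comes for free because the image is the set of branches of a tree. Your version trades the choice of a cofinal chain for explicit clopen conditions cutting out the image, at the cost of the extra work above in the reverse inclusion.
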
     
\begin{proof}
Let $\seq{U_n} \sN n$ be a descending sequence of $^*$subgroups that is cofinal (every $^*$subgroups contains an $U_n$). Fix a bijection $ f\colon \NN \to M$.   
We define an  injection $\Delta$ from $\+ F(M)$ into   Baire space ${}^\omega \omega$.

  Suppose that $x\in \+ F(M)$. Let $\Delta(x)(0)$ be the  left $^*$coset of $U_0$ in $x$. 
  
  Suppose  $\Delta(x)(2n)$  is defined  and a left $^*$coset of some $U_r$.
  Let $\Delta(x)(2n+1)$ be the $k$ such that $A=f(k) \in x$, and $A $ is a right $^*$coset of $U_m$ where $m > r$ is chosen least possible.  This exists by Claim~\ref{axiom existence of full filters}, since the sequence  $\seq{U_n}$ is cofinal, and (A2).

Similarly,   suppose  $\Delta(x)(2n+1)$  is defined  and a right  $^*$coset of some $U_r$.
  Let $\Delta(x)(2n+2)$ be the $k$ such that $A=f(k) \in x$, and $A $ is a left  $^*$coset of $U_m$ where $m > r$ is chosen least possible.  
  
  By the axioms,  $\Delta$ is injective because $x$ is the filter generated by $\Delta(x)$. One checks that it is a homeomorphism because full filters correspond to the paths on the subtree of the strings given by the  possible next choices at each step.
\end{proof}

Next we define the group operations on    $\+ F(M)$. For filters $x,y$ we let 
 \begin{eqnarray*} x^{-1}&=& \{A^{-1} \colon \, A\in x\}\\
  x  y &= &\{ C \colon \ex A, B  [ A   B\sq C]\}\end{eqnarray*}
  Here   writing $A   B$   implies  that it  is defined, i.e. $\ex U A \ _U B$.
  \begin{claim} If $x,y$ are full filters, then $ x^{-1}$ and $z=x  y$   are full filters. \end{claim}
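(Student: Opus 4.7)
The plan is to verify, for each of $x^{-1}$ and $z = xy$, the three requirements of a full filter per Definition \ref{df:fullfilter}: upward closure, closure under meets in $(M,\sq)$, and the presence of both a left and a right $^*$coset of every $^*$subgroup. Properness is automatic since products of morphisms are morphisms, hence nonzero, and inverses of morphisms are nonzero.

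For $x^{-1}$ the verification is essentially formal and rests on axiom (A1). Upward closure of $x^{-1}$ follows directly from the order-reversing property of inversion. For closure under meets, given $A_1, A_2 \in x^{-1}$ set $D := A_1^{-1} \wedge A_2^{-1} \in x$; by (A1) one has $D^{-1} \sq A_1, A_2$ and $D^{-1} \in x^{-1}$, and a straightforward check shows $D^{-1}$ is the meet of $A_1, A_2$. Fullness is bookkeeping: a morphism $A : U \to V$ in $x$ inverts to $A^{-1} : V \to U$ in $x^{-1}$, so right $^*$cosets of $U$ in $x$ become left $^*$cosets of $U$ in $x^{-1}$ and vice versa.

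For $z = xy$, upward closure is immediate: if $AB \sq C \sq C'$ then $AB \sq C'$. The main obstacle is closure under meets. Given $C_1, C_2 \in z$ witnessed by $A_i B_i \sq C_i$ with $A_i \in x$, $B_i \in y$, and $A_i B_i$ defined (so $A_i \in LC(V_i)$ and $B_i \in RC(V_i)$ for some $^*$subgroups $V_i$), the naive candidates $A := A_1 \wedge A_2 \in x$ and $B := B_1 \wedge B_2 \in y$ have right-, respectively left-$^*$subgroups $V, V'$, each $\sq V_1 \wedge V_2$ by Claim \ref{cl:incl subgr}; but $V = V'$ need not hold, so $AB$ can fail to be defined. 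My fix is to pass to $W := V \wedge V'$: using fullness of $x$, pick $A^* \in x$ a left $^*$coset of $W$ and replace $A$ by $A \wedge A^* \in x$, which is a left $^*$coset of $W$ by Claim \ref{cl: cap_coarse}; replace $B$ dually via a right $^*$coset of $W$ in $y$, invoking the right-coset analogue of Claim \ref{cl: cap_coarse} (obtained by inversion). After these shrinkings $AB$ is defined, and (A3) with $A \sq A_i$, $B \sq B_i$ gives $AB \sq A_i B_i \sq C_i$. Hence $AB \in z$ is a common lower bound of $C_1, C_2$, which are therefore compatible; the meet $C_1 \wedge C_2$ then exists above $AB$ and lies in $z$ by upward closure.

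Fullness of $z$ is obtained by composing suitable witnesses. Given a $^*$subgroup $U$, use fullness of $x$ to pick $A \in x$ with $A : U \to V$, then fullness of $y$ to pick $B \in y$ a right $^*$coset of $V$, say $B : V \to V''$. Then $AB : U \to V''$ is defined and is a right $^*$coset of $U$ lying in $z$. The dual construction, starting from a left $^*$coset of $U$ in $y$ and working back through $x$, supplies the required left $^*$coset of $U$ in $z$.
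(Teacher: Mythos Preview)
Your proof is correct, but the argument for downward directedness of $z = xy$ is more laborious than needed, and your ``fix'' addresses a non-problem. By Claim~\ref{cl: cap_coarse}, the meet $A_1 \wedge A_2$ of left $^*$cosets of $V_1, V_2$ is already a left $^*$coset of $V_1 \wedge V_2$, and by the right-coset version the same holds for $B_1 \wedge B_2$; so in fact $V = V' = V_1 \wedge V_2$ and $(A_1 \wedge A_2)(B_1 \wedge B_2)$ is defined outright. Your shrink to $W = V \wedge V'$ is therefore unnecessary (though harmless).

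The paper's proof is shorter still: instead of taking meets of the witnesses, it directly invokes fullness to choose $A \in x$ a left $^*$coset of $U := V_1 \wedge V_2$ and $B \in y$ a right $^*$coset of $U$; then $AB$ is automatically defined, and Axiom~(A4) together with directedness of $x$ (resp.\ $y$) forces $A \sq A_i$ and $B \sq B_i$, whence $AB \sq C_i$ by (A3). This bypasses all the meet-of-witnesses bookkeeping. On the other hand, you verify fullness of $z$ explicitly (composing a right $^*$coset of $U$ from $x$ with a right $^*$coset of the resulting $V$ from $y$, and dually), whereas the paper passes over this point in silence; your treatment there is more complete.
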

That $x^{-1}$ is a full filter  is straightforward. 
  For the second statement, clearly $z$ is upwards closed. We verify  that $z$ is downwards directed. 
  
  Always let $i=0,1$.  Suppose $C_i \in z$. Then there are $A_i \in x$, $B_i\in y$ and $U_i$ such that  $ {A_i \ }_{U_i} B_i$   and $A_i   B_i \sq C_i$. 
  
  Let $U = U_0 \wedge U_1$.  By definition of full filters there are $A\in x$ and $B\in y $ such that $A \ _U \, B$. Then  $C=A  B \in z$. By  (A4) and since filters are downwards directed, we have $A\sq A_i$ and $B \sq B_i$. So   by  (A3) we have $C \sq C_i$   as required.
  
  The neutral element $e$  is the full filter consisting of all the $^*$subgroups. It is clear from the groupoid axioms that  $(\+ F(M), \cdot)$ is a group with this neutral element, and the inverse operation above.
  
  That leaves continuity of the group operations. First,  
  as in \cite[Claim 2.11]{Nies.Schlicht.etal:19} we need to check that the transfer from  formal to semantic concepts works between $^*$cosets $A$ and the corresponding  actual open cosets $\hat A$. Note that we are NOT claiming that these are the only open cosets; this is not true unless we require further axioms special to the particular class of groups we are interested in.

  \begin{claim} \label{subset is correct}  Let  $A,B,C \in M$.  
\bi

\item[(a)]  $A\sqsubseteq B \Longleftrightarrow \hat{A}\subseteq \hat{B}$. 
\item[(b)] $\hat {B^{-1}} = (\hat{B})^{-1}$. 

 \item[(c)] If $A\cdot B$ is defined then 
$\hat {A \cdot B} =  \hat A \hat B$.
\item[(d)]  $\hat U$ is a subgroup of $\+ F(M)$.
\item[(e)]  $A\in LC(U) \Longleftrightarrow\hat A $ is a left coset of $\hat U$. 

\n Similarly for right cosets.

 \ei
\end{claim}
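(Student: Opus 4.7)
The plan is to prove the five parts roughly in the order listed, treating (a), (b), (d) as basic and then leveraging them, together with the already-established group structure on $\+ F(M)$, to get (c) and (e).

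For (a), the forward direction is immediate from upward closure of filters. For the converse, I argue by contrapositive: if $A \not\sqsubseteq B$, axiom (A5) yields $C \sqsubseteq A$ with $C \perp B$, and Claim~\ref{cl:exist filter} produces a full filter $x$ containing $C$. Then $A \in x$ by upward closure, whereas $B \notin x$, since otherwise $C \wedge B = 0$ would lie in $x$ by downward directedness, contradicting properness. Part (b) is just unfolding the definitions of $\hat{(\cdot)}$ and of $x^{-1}$. Part (d) reduces to $U^{-1} = U$ (giving $(\hat U)^{-1} = \hat U$ via (b)) and to $U \cdot U = U$ (giving closure under the filter product, since $U \in y$ and $U \in z$ imply $U \in yz$); the neutral filter contains every $^*$subgroup and so lies in $\hat U$.

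The key item is (c). The inclusion $\hat A \hat B \subseteq \hat{A \cdot B}$ is direct from the definition of the filter product: if $y \in \hat A$ and $z \in \hat B$ then $A \cdot B \in yz$. For the reverse inclusion, given $x \in \hat{A \cdot B}$, I pick $y \in \hat A$ using Claim~\ref{cl:exist filter} and set $z = y^{-1} x$, a full filter by the earlier verification. Because $A^{-1} \in y^{-1}$, $A \cdot B \in x$, and the groupoid computation $A^{-1} \cdot (A \cdot B) = B$ holds, we obtain $B \in z$; i.e., $z \in \hat B$. Using that $\+ F(M)$ is a group, $y z = y(y^{-1} x) = x$, so $x \in \hat A \hat B$.

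Finally (e) follows from (c). Suppose $A \in LC(U)$, so the groupoid identities $A \cdot U = A$ and $A^{-1} \cdot A = U$ hold. Picking $y \in \hat A$ via Claim~\ref{cl:exist filter}, (c) gives $\hat A \hat U = \hat{A \cdot U} = \hat A$ and $\hat{A^{-1}} \hat A = \hat{A^{-1} \cdot A} = \hat U$; the first delivers $y \hat U \subseteq \hat A$, while the second lets any $x \in \hat A$ be written as $x = y \cdot (y^{-1} x)$ with $y^{-1} x \in \hat U$, so $\hat A \subseteq y \hat U$. Conversely, if $\hat A$ is a left coset of $\hat U$ and $V$ is the unique $^*$subgroup with $A \in LC(V)$, the forward direction exhibits $\hat A$ also as a left coset of $\hat V$; the general group-theoretic fact that a left coset determines its subgroup forces $\hat U = \hat V$, whence $U = V$ by (a). The main obstacle is the reverse inclusion in (c): it relies on the prior verification that $\+ F(M)$ with the defined operations is genuinely a group, and in particular on the identity $y(y^{-1} x) = x$ for filter products, which ultimately traces back to associativity and the groupoid axioms (A1)--(A3).
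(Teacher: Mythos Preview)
Your proof is correct and follows essentially the same route as the paper: parts (a), (b), (d), and the forward direction of (e) match, and your detailed argument for (c) is the natural elaboration of the paper's terse one-line reference to Claim~\ref{cl:exist filter}. For the converse of (e) you argue via the general fact that a nonempty left coset determines its subgroup (so $\hat U = \hat V$ and then $U = V$ by (a)), whereas the paper instead picks $B \in LC(U)$ lying in the full filter $x$ with $\hat A = x\hat U$ and concludes $\hat A = \hat B$, hence $A = B$; both variants are short and correct.
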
 
\begin{proof} 

(a) The implication $\RA$ is upward closure of full filters. For the implication $\LA$, suppose that $A \not \sq B$. By (A5) there is $C \sq A$ such that $C \perp B$.  By Claim~\ref{cl:exist filter} let  $x$ be a full filter such that $C \in x$. Then $A \in x$ and $B \not \in x$. 
(b) is immediate. For 
(c),  $\supseteq$ is by definition, and  $\sub$ follows from Claim~\ref{cl:exist filter}.

\n (d) is immediate using $UU=U$.

\n (e) We follow \cite{Nies.Schlicht.etal:19}:

\n $\RA$: take any $x\in \hat{A}$. 
We  show that $x\hat{U}=\hat{A}$.

For $x\hat{U}\subseteq \hat{A}$, let $y\in \hat{U}$. Since $A\in LC(U)$, we have $AU\sqsubseteq A$. So $x\cdot y\in \hat{A}\hat{U}\subseteq\hat{A}$ by (c).

For  $\hat{A}\subseteq x\hat{U}$, let $y\in \hat{A}$. To show  that $y\in x\hat{U}$, or equivalently $x^{-1} y\in \hat{U}$,  note that we have $x^{-1}y\in \hat{A}^{-1}\hat{A}=\hat{A^{-1}}\hat{A} \subseteq \hat{U}$ by (b) and (c). 

\n $\LA$: Suppose $\hat A = x \hat V$. There is $B \in x$ such that $B \in LC(V)$. By the forward implication, $\hat B$ is a left coset of $\hat V$. Also $x \in \hat A \cap \hat B$, so $A\perp B$ fails.  Since $A, B \in LC(V)$ this implies $A=B$ by (A4). 

The case of right cosets follows by taking inverses.
\end{proof}   
Another transfer  fact will be useful. 
 \begin{claim} The map $x, y \to x\cdot y^{-1}$ is continuous on $\+ F(M)$. \end{claim}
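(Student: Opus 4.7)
The plan is to verify continuity by checking that the preimage of each subbasic open set $\hat C$ under the map $\mu\colon (x,y)\mapsto x\cdot y^{-1}$ is open in $\+F(M)\times\+F(M)$. Fix $(x,y)$ with $\mu(x,y)\in\hat C$, i.e.\ $C\in x\cdot y^{-1}$. I need to produce a basic open rectangle $\hat A\times\hat B$ containing $(x,y)$ and mapping into $\hat C$.

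Unpacking the definitions of product and inverse of full filters, $C\in x\cdot y^{-1}$ means there exist $A\in x$ and $B'\in y^{-1}$ with the product $A\cdot B'$ defined and $A\cdot B'\sqsubseteq C$. Setting $B=(B')^{-1}\in y$, this reads: there exist $A\in x$ and $B\in y$ with a common $^*$subgroup $U$ such that ${A}_U$ and ${B}_U$, the product $A\cdot B^{-1}$ is defined (because $\mathbf r(A)=U=\mathbf r(B)=\mathbf d(B^{-1})$), and $A\cdot B^{-1}\sqsubseteq C$.

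I claim the neighborhood $\hat A\times\hat B$ of $(x,y)$ is mapped by $\mu$ into $\hat C$. Let $(x',y')\in\hat A\times\hat B$, so $A\in x'$ and $B\in y'$. Then $B^{-1}\in(y')^{-1}$ by definition of the inverse filter. The product $A\cdot B^{-1}$ is defined exactly as before (this property depends only on the morphisms, not on which filter contains them), so from the definition of $x'\cdot(y')^{-1}$ we have $A\cdot B^{-1}\in x'\cdot(y')^{-1}$. Since full filters are upward closed in $\sq$ and $A\cdot B^{-1}\sqsubseteq C$, this gives $C\in x'\cdot(y')^{-1}$, i.e.\ $\mu(x',y')\in\hat C$ as required.

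There is no serious obstacle here, just bookkeeping: the one point to be careful about is that the definedness of the groupoid product $A\cdot B^{-1}$, and the relation $A\cdot B^{-1}\sqsubseteq C$, are statements purely about the morphisms in $M$, so they transfer verbatim from the pair $(x,y)$ to any other pair $(x',y')$ containing the same witnesses $A$ and $B$. Continuity of inversion alone, and of multiplication separately, fall out as special cases of this argument (take $x$ or $y$ to be $e$), so the combined map is handled in one step.
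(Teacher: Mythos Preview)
Your proof is correct and follows essentially the same route as the paper: both unpack $C\in x\cdot y^{-1}$ to obtain witnesses $A\in x$, $B\in y$ with $A\cdot B^{-1}\sqsubseteq C$, and then observe that the rectangle $\hat A\times\hat B$ maps into $\hat C$. The only cosmetic difference is that the paper packages the last step via the transfer Claim~\ref{subset is correct} (writing $\hat A(\hat B)^{-1}\subseteq \hat C$), whereas you verify it directly from the definitions of product and inverse of full filters.
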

\n This follows since the sets of the form $\hat D$ form a basis. If $x  y^{-1} \in \hat D$, i.e., $D \in  x  y^{-1}$,  then by definition there are $A \in x$ and $B\in y $ such that $A B^{-1} \sq D$. Now,  by  Claim~\ref{subset is correct},   $\hat A \hat B^{-1} \sub \hat D$ as required. 
  
  \begin{claim} \label{correct form of left cosets}  \
  
  \n
For any left coset $x\hat{V}$ in $\+ F(M)$, there is $A \in LC(V)$  such that  $x\hat{V}=\hat{A}$. 
\end{claim}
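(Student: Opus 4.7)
The plan is to extract the required $A$ directly from the full-filter property of $x$ and then invoke part (e) of Claim~\ref{subset is correct} to transfer from the formal side to the semantic side.

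First I would use Definition~\ref{df:fullfilter}: since $x$ is a full filter and $V$ is a $^*$subgroup, there is some left $^*$coset $A$ of $V$ (necessarily unique by Axiom (A4)) with $A \in x$. Equivalently, $x \in \hat A$. By part~(e) of Claim~\ref{subset is correct}, $\hat A$ is then a left coset of the subgroup $\hat V \le \+ F(M)$.

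Next I would observe that $x \hat V$ is by definition also a left coset of $\hat V$ in the group $\+ F(M)$, and that $x \in x\hat V$: indeed the neutral element $e$ of $\+F(M)$ consists of all $^*$subgroups, so $V \in e$, i.e. $e \in \hat V$, and hence $x = x\cdot e \in x\hat V$. Thus $x\hat V$ and $\hat A$ are two left cosets of the subgroup $\hat V$ which both contain $x$; by elementary group theory they coincide, giving $x\hat V = \hat A$ as required.

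There is no real obstacle here since the heavy lifting has already been done in Claim~\ref{subset is correct}(e), which established the correspondence between formal left $^*$cosets $A \in LC(V)$ and honest left cosets $\hat A$ of $\hat V$. The only mild point of care is checking $x \in x \hat V$, which reduces to noting that the full filter consisting of all $^*$subgroups is the group-theoretic identity of $\+ F(M)$.
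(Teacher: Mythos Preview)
Your proof is correct and follows essentially the same idea as the paper's: pick the left $^*$coset $A\in LC(V)$ lying in the full filter $x$, note via Claim~\ref{subset is correct}(e) that $\hat A$ is a genuine left coset of $\hat V$, and conclude $x\hat V=\hat A$. The only cosmetic difference is that the paper verifies the two inclusions $x\hat V\subseteq\hat A$ and $\hat A\subseteq x\hat V$ by hand (using parts (b), (c), (e) of Claim~\ref{subset is correct}), whereas you invoke the general group-theoretic fact that two left cosets of $\hat V$ sharing the point $x$ must coincide; this is a perfectly acceptable shortcut.
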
 
\begin{proof} 
Since $x$ is a full filter, there is some left $^*$coset $A$ of $V$ in $x$. 
We claim that $x\hat{V}= \hat{A}$. We have $x\hat{V}\subseteq \hat{A}\hat{V}=\hat{V}$, since $A\in x$ and $\hat{A}$ is a left coset of $\hat{V}$ by Claim \ref{subset is correct}. To see that $\hat{A}\subseteq x\hat{V}$, let $y\in \hat{A}$. Since $x,y\in \hat{A}$, $x^{-1}y\in \hat{A}^{-1} \hat A=\hat{A^{-1}} \hat A\sqsubseteq \hat{V}$ by Claim \ref{subset is correct}. Thus $y  \in x\hat{V}$. 
\end{proof}

 By definition of the topology,  the open subgroups of $\+ F(M)$ form a nbhd base of $1$. So if $M$ is countable,  $\+ F(M)$ is a non-Archimedean Polish group. 
  
The operation $\+ F$ recovers a topological group from its coset structure when that is countable. It also works in the t.d.l.c.\ setting where $\+M (G)$ denotes the compact open cosets.
   \begin{prop}[cf.\ ~\cite{Kechris.Nies.etal:18}, after Claim 3.6, and \cite{Nies.Schlicht.etal:19},   Prop 2.13]  \label{fact:standard} \

\n Suppose that $G $ is a closed subgroup of $ \S$ such that  $\+ M(G)$ is countable. There is a natural group homeomorphism  \bc $\Phi: G \cong \+ F( \+ M(G))$ given  by   $g \mapsto \{ A \colon A \ni g\}$, \ec with inverse   given by $x \mapsto g$ where $ \bigcap x = \{g\}$.    \end{prop}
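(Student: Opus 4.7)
The plan is to verify four things in turn: (a) $\Phi(g)$ is a full filter for every $g\in G$; (b) for every full filter $x$, the intersection $\bigcap x$ is a singleton, so the map $x\mapsto g$ is well-defined; (c) $\Phi$ is a bijective homomorphism with that map as its inverse; and (d) the bijection is a homeomorphism.

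For (a), fix $g\in G$. Upward closure of $\Phi(g)=\{A\in\+M(G):g\in A\}$ is immediate since $A\sq B$ means set-theoretic inclusion in $\+M(G)$. Downward directedness follows from Claim~\ref{cl: cap_coarse}: if $g\in A\cap B$ with $A\in LC(U)$ and $B\in LC(V)$, then $A\cap B$ is the left coset of $U\cap V$ containing $g$, which equals $A\wedge B$. Fullness is exactly the statement that for every compact open subgroup $V$ both $gV$ and $Vg$ lie in $\+M(G)$ and contain $g$. For (b), each $A\in x$ is a non-empty compact open coset, and finite directedness gives the finite intersection property; compactness of any fixed $A_0\in x$ then forces $\bigcap x\neq\emptyset$. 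Uniqueness of the point uses fullness: for every compact open subgroup $V$ the filter $x$ contains some left coset $gV$ of $V$, and since the compact open subgroups form a neighbourhood basis of $1$ in a t.d.l.c.\ group, $\bigcap x$ cannot contain two distinct points.

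For (c), to show $\Phi$ is a homomorphism, one inclusion $\Phi(g)\Phi(h)\subseteq\Phi(gh)$ is immediate: if $A\ni g$, $B\ni h$ and $A\cdot B\sq C$, then $gh\in AB\subseteq C$. For the reverse, given $C\in\+M(G)$ with $gh\in C$, write $C=ghV$ for $V$ a compact open subgroup; by continuity of conjugation we can pick a compact open $U$ with $h^{-1}Uh\subseteq V$, and then $A=gU$, $B=Uh$ satisfy $\rdom(A)=U=\ldom(B)$ and $AB=gUh\subseteq ghV=C$. Injectivity of $\Phi$ follows from the Hausdorff property of $G$ (distinct $g,h$ are separated by some $gU\neq hU$), and surjectivity is verified by showing that if $\bigcap x=\{g\}$ then $\Phi(g)=x$: the inclusion $x\subseteq\Phi(g)$ is by definition of $g$, and for the reverse, if $g\in A$ with $A\in LC(U)$ then $A=gU$; choosing the unique left $^*$coset $B$ of $U$ in $x$ (which exists by fullness), both $A$ and $B$ contain $g$ and are left cosets of $U$, so $A=B\in x$.

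For (d), the equality $\Phi^{-1}(\hat A)=A$ for $A\in\+M(G)$ shows that $\Phi$ is continuous, and the equality $\Phi(A)=\hat A$ (since $\Phi(g)\in\hat A\Leftrightarrow g\in A$) shows that $\Phi$ is open on the basis of compact open cosets of $G$, which generates the topology of $G$. Hence $\Phi$ is a topological group isomorphism.

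The main obstacle is the surjectivity calculation in (c): it requires knowing that the compact open cosets really do correspond one-to-one to the elements of a full filter, and this is where fullness plus axiom (A4) (disjointness of distinct left cosets of a fixed $^*$subgroup) is essential. The t.d.l.c.\ hypothesis is used in two critical places: to guarantee that $\Phi(g)$ contains ``enough'' cosets (any compact open subgroup gives one), and to secure the compactness-based finite intersection argument that produces the point $g$ from a full filter.
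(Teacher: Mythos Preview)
Your write-up is considerably more detailed than the paper's, which gives only the one-line hint that the inverse map sends a full filter to ``the point it converges to''. Parts (a), (c), and (d) are correct and essentially the intended argument. However, part (b) has a genuine gap relative to the stated hypothesis.

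The proposition is stated for a closed subgroup $G\le_c\S$ with $\+M(G)$ countable; in this setting $\+M(G)$ consists of \emph{all} open cosets, and these are typically not compact (take $G=\S$ itself, or $G=\Aut(\mathbb Q,<)$). Your finite-intersection-property argument for $\bigcap x\neq\emptyset$ relies on compactness of some $A_0\in x$, so it only covers the t.d.l.c.\ variant mentioned \emph{after} the proposition, not the case actually asserted. You flag this yourself in the last paragraph, but the proposition as written still needs a proof.

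The replacement argument, which is what the paper's hint ``converges to'' points at, uses completeness rather than compactness. Fix a descending neighbourhood basis $(V_k)$ of open subgroups of $G$. By fullness, $x$ contains a unique left coset $A_k$ and a unique right coset $B_k$ of each $V_k$, and by directedness $A_k\cap B_k\neq\emptyset$; pick $g_k$ there. For $m\ge k$ one has $g_m\in A_m\sub A_k$, giving $g_k^{-1}g_m\in V_k$, and symmetrically $g_m g_k^{-1}\in V_k$ from the right cosets. Thus $(g_k)$ is Cauchy in the two-sided uniformity; since $G$ is Polish it is complete there, so $g_k\to g\in G$, and since each $A_k$ is closed we get $g\in\bigcap_k A_k\sub\bigcap x$. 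Uniqueness is as you wrote. Note that this is exactly where the \emph{both-sided} fullness condition is needed: with only left cosets one would get a left-Cauchy sequence whose limit could be a non-surjective injection (compare the paper's $\Aut(\mathbb Q,<)$ example). After this fix, replace ``compact open'' by ``open'' throughout and your proof goes through for the stated case.
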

The inverse map simply sends a full filter $x$ to the point it converges to. Note that $x$ isn't really a filter in the sense of topology, only on certain open sets, but that suffices for the convergence notion.

\begin{example} 
For   an instructive  example of a coarse groupoid,    consider the oligomorphic group~$G= \Aut(\mathbb Q, <)$. The open subgroups of $G$ are the stabilizers of finite sets. If $U,V$ are stabilizers of sets of the same finite cardinality, there is a unique morphism $A \colon U \to V$ in the sense above, corresponding to the order-preserving bijection between the two sets. The coarse groupoid for $\Aut(\mathbb Q, <)$ is   canonically isomorphic to the groupoid of finite order-preserving maps on $\mathbb Q$, with the partial order being   reverse extension.  For compatible maps $A,B$, the meet $A \wedge B$ is the union of those maps.

A filter $x$   corresponds  to an arbitrary  order-preserving map $\psi$ on $\mathbb Q$. The filter $x$ contains a right coset of each open subgroup iff $\psi$ is total, and a left coset of each open subgroup iff $\psi $ is onto. So the set of full filters corresponds to $\Aut (\mathbb Q)$ as expected. (Incidentally, this example shows that in Definition~\ref{df:fullfilter}  we  need both types of cosets, and that not every maximal filter is full.) \end{example}

\subsection*{The filter group as an automorphism group}
Let $M$ be a coarse groupoid. By $M_{\text{left}}$ we will denote the structure with domain $M$ and the operations $\wedge$ and $(r_B)_{B\in M}$ where  $A r_B= AB$ in  case $\rdom(A)= \ldom (B)$, and $A r_B=0$ otherwise. We show that  the left action of $\+ F(M)$  on $M$ corresponds to the automorphisms of this ``rewrite" of $M$. (This is simlar to showing that a group is isomorphic to the automorphism group of a Cayley graph given by a generating set, with edge relations labelled according to the generators.)  Note that for each automorphism $p$ of  $M_{\text{left}}$, and each $A$, we have \[\rdom (p(A))= \rdom(A).\]
This is because where $U = \rdom(A)$, we have $p(A)U=  p(AU)= p(A)$. Also, note that $p$ is determined by  its restriction  to the $^*$subgroups, because for each right $^*$coset $B$ of  a $^*$subgroup $U$ we have $p(B) = p(U) B$. 
\begin{prop} \label{prop: left autom} $\+ F(M) $ is topologically isomorphic to $\Aut(M_{\text{left}})$ via a canonical isomorphism $\Theta$.  \end{prop}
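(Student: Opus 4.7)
The plan is to define $\Theta(x)(A) := B \cdot A$, where $B$ is the unique element of $x$ lying in $LC(\ldom(A))$. Existence comes from $x$ being full; uniqueness comes from (A4) applied to two compatible elements of $LC(\ldom(A))$ both sitting in the filter $x$. I would first verify that $\Theta(x)$ is an automorphism of $M_{\text{left}}$: left multiplication by a fixed groupoid morphism preserves $\sq$ (hence $\wedge$) by (A3), it commutes with every right-translation $r_C$ by associativity in the groupoid, and $\Theta(x^{-1})$ is its inverse. Then $\Theta(xy) = \Theta(x) \circ \Theta(y)$ follows directly from the definition of $xy$: pick $D \in y \cap LC(\ldom(A))$ and $B \in x \cap LC(\ldom(D))$, so that $BD \in xy$ lies in $LC(\ldom(A))$ and witnesses both sides. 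Injectivity is immediate once one notes that $x$ is recovered from $\Theta(x)$ via $x = \{\Theta(x)(V) : V \text{ a } {}^*\text{subgroup}\}$, since $\Theta(x)(V) = B\cdot V = B$ is the unique element of $x \cap LC(V)$.

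The main obstacle is surjectivity. Given $p \in \Aut(M_{\text{left}})$, the first step is to show $p$ preserves $\rdom$: from $p(A\, r_V) = p(A)\, r_V$ with $V = \rdom(A)$, the fact that $r_V$ is defined at $p(A)$ forces $\rdom(p(A)) = V$. Consequently $p(U) \in LC(U)$ for every ${}^*$subgroup $U$, and writing $A = U\, r_A$ (where $U = \ldom(A)$) yields the key identity $p(A) = p(U) \cdot A$. Set $q(U) := \ldom(p(U))$. By $\wedge$-preservation together with Claim~\ref{cl:incl subgr}, $q$ is order-preserving on the semilattice of ${}^*$subgroups, and the analogous map $q'$ for $p^{-1}$ satisfies $q \circ q' = q' \circ q = \mathrm{id}$: applying $\ldom$ to the equation $p(p^{-1}(V)) = V$, rewritten via the key identity as $p(q'(V)) \cdot p^{-1}(V) = V$, yields $q(q'(V)) = V$. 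Hence $q$ is a bijection on ${}^*$subgroups.

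Define $g := \{A \in M : p(U) \sq A \text{ for some ${}^*$subgroup } U\}$. Upward closure is built in, and downward directedness uses $p(U) \wedge p(V) = p(U \wedge V) \in g$. For fullness, $p(V) \in g \cap LC(V)$ supplies the left ${}^*$coset; because $q$ is surjective, $U := q'(V)$ gives $p(U) \in g$ with $\ldom(p(U)) = q(q'(V)) = V$, supplying the right ${}^*$coset. This is exactly where surjectivity of $q$ is used; without it, fullness on the right side would fail, and this is the main technical point of the proof. The identity $p(A) = p(\ldom(A)) \cdot A$ then yields $\Theta(g) = p$.

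Finally, $\Theta$ is a homeomorphism. The preimage of the subbasic open set $\{p : p(A) = B\}$ in $\Aut(M_{\text{left}})$ equals $\widehat{BA^{-1}}$, because the witness $C \in x \cap LC(\ldom(A))$ with $C \cdot A = B$ must be $C = BA^{-1}$ (which is only defined when $\rdom(A) = \rdom(B)$, else the preimage is empty). Conversely, $\Theta(\hat A) = \{p : p(\rdom(A)) = A\}$, a subbasic open set in the pointwise-convergence topology. Combined with the bijective group homomorphism property, this makes $\Theta$ the desired topological group isomorphism.
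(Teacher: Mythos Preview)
Your proof is correct and follows the same overall approach as the paper: define $\Theta(x)$ via left translation by the appropriate coset in $x$, and recover the inverse by looking at what $p$ does to the $^*$subgroups. Two minor differences are worth noting.

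For surjectivity, you take a detour through the auxiliary map $q(U)=\ldom(p(U))$ and prove it is a bijection on $^*$subgroups in order to supply the right $^*$coset in $g$. The paper does this more directly: given a $^*$subgroup $U$, pick $B$ with $p(B^{-1})=U$ (using only that $p$ is a bijection on $M$); since $p$ preserves $\rdom$ one gets $\ldom(B)=U$, whence $B\in RC(U)$, and the computation $p(B^{-1}B)=UB=B$ shows $B=p(\rdom(B))\in p(s_M)$. This avoids the $q,q'$ machinery entirely. Your route is fine, just longer.

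For the topology, you check continuity on arbitrary subbasic sets, whereas the paper exploits that $\Theta$ is already a group homomorphism and so only checks continuity at the identity via the equivalence $p(U)=U \Leftrightarrow x\in\hat U$. One small imprecision in your write-up: you say $\Theta(x)$ preserves $\sqsubseteq$ because ``left multiplication by a fixed groupoid morphism'' does, but the multiplier $B$ depends on $\ldom(A)$; the actual argument needs that for $A_1\sqsubseteq A_2$ the corresponding $B_1,B_2\in x$ satisfy $B_1\sqsubseteq B_2$ (via (A4) and compatibility in the filter), after which (A3) applies.
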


\begin{proof} For $x \in \+ F(M)$, the left action $_UB \mapsto A= x\cdot B$ is given by $A = CB$ where $C_U \in x$. The isomorphism $\Theta \colon \+ F(M) \to \Aut(M_{\text{left}})$   maps $x$ to its left action: \bc $\Theta(x)(A) =  x\cdot A$. \ec Clearly $\Theta(x)\in \Aut(M_{\text{left}})$, and $\Theta$ preserves the group operations. 

Let $s_M \in \+ F(M) $ denote the full filter of $^*$subgroups, which is the neutral element of $\+ F(M)$. We claim that the inverse $\Phi$ of $\Theta$ is given by     \bc  $\Phi(p)= p(s_M)$. \ec 

Clearly  $x= \Phi(p)$ is a filter. To show that $x$ is a \emph{full} filter, let $U$ be a $^*$subgroup in $M$. Since $p$ is an automorphism, firstly,  we have $p(U)U = p(U)$, so $p(U) \in LC(U)$ and $p(U) \in x$. Secondly,  there is $B$ such that  $p(B^{-1})=U$.   Then $p(B^{-1}B)= UB=B$. So $B\in RC(U)$. Now $V=B^{-1}B= \rdom(B)$ is a $^*$subgroup. Since $p(V)=B$ we have $B \in x$. 

We verify that $\Theta, \Phi$  are inverses of each others.
\n $\Phi(\Theta(x))= x$ because

 $A_U \in x \lra xU = A \lra \Theta(x)(U)= A$. 

\n $\Theta(\Phi(p) )= p$ because 

$p(U) = A \lra A \in \Phi(p) \lra \Phi(p)U= A \lra \Theta(\Phi(p))(U)= A$.

To show $\Theta$ and $ \Phi$ are continuous at $1$, note that if  $p= \Theta(x)$, then $p(U)= U$ is equivalent to  $x \in \hat U$.  

\end{proof}

\subsection*{Profinite groups, and $^*$compact coarse groupoids} 
\
As mentioned, Chatzidakis~\cite{Chatzidakis:98} carried out the first research related to the application of coarse groupoids to profinite groups. In her version the coarse groupoid was restricted to   normal open $^*$subgroups, which suffices in that case. (Thanks to the Ivanovs for pointing this out.)

  We say that  a coarse groupoid $M$ is \emph{$^*$compact} if $\forall U \, [LC(U) \text{ is finite}]$. This is, of course,  equivalent to requiring that $\forall U \, [RC(U) \text{ is finite}]$.  Clearly $\+ M(G)$ for profinite $G$ has this property. 
By Prop.~\ref{prop:normal subgroup},  
$^*$compactness implies that   each $^*$subgroup $U$ contains a normal $^*$subgroup $V$. (This was required separately in  \cite{Nies.Schlicht.etal:19}.)
 \begin{prop}  \label{prop:profinite} Let $M$ be a coarse groupoid.  Then 

    $M$ is $^*$compact $\LR$ $\+ F(M)$ is compact. \end{prop}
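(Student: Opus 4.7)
For the backward direction, suppose $\+ F(M)$ is compact, and fix a $^*$subgroup $U$. By Claim~\ref{cl:exist filter}, every $\hat A$ (for $A\in LC(U)$) is nonempty; by axiom (A4), distinct $\hat A$'s are disjoint; and by fullness, they cover $\+ F(M)$. Compactness forces this disjoint clopen cover to be finite, so $LC(U)$ is finite, and by inversion $RC(U)$ is finite as well. Hence $M$ is $^*$compact.

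For the forward direction, suppose $M$ is $^*$compact. Let $P=\prod_U(LC(U)\times RC(U))$, with each finite factor given the discrete topology; then $P$ is compact by Tychonoff. Define $\iota\colon \+ F(M)\to P$ by $\iota(x)=((L_U(x),R_U(x)))_U$, where $L_U(x)$ and $R_U(x)$ are the unique left, respectively right, $^*$cosets of $U$ contained in $x$ (existence by fullness, uniqueness by (A4)). The map $\iota$ is injective because $A\in x$ iff $A=L_{\mathbf r(A)}(x)$, and both $\iota$ and its inverse are continuous for the same reason: the preimage under $\iota$ of the basic open set $\hat A\subseteq \+ F(M)$ is exactly the coordinate condition $L_{\mathbf r(A)}=A$ on $P$.

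It remains to show that $\iota(\+ F(M))$ is closed in $P$. I would characterize it as the set of tuples satisfying two clopen conditions: (I) coherence, $U\sq V$ implies $L_U\sq L_V$ and $R_U\sq R_V$; and (II) consistency, $R_{\mathbf l(L_U)}=L_U$ and $L_{\mathbf r(R_U)}=R_U$ for every $U$. Each involves only finitely many coordinates in finite discrete factors, so defines a clopen set; the intersection over all instances is closed. That every $\iota(x)$ satisfies (I) and (II) is immediate from directedness of $x$ and uniqueness of left/right cosets in a full filter. For the converse, given a tuple satisfying (I) and (II), set $x=\{A\in M : A=L_{\mathbf r(A)}\}$; by (II) this equals $\{A : A=R_{\mathbf l(A)}\}$, so $x$ contains every $L_U$ and every $R_U$. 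Upward closure then follows from (I) and (A4), and downward directedness reduces via (I) and Claim~\ref{cl: cap_coarse} to the computation $L_U\wedge L_V=L_{U\wedge V}$; fullness is automatic by construction. The main obstacle is isolating the right clopen consistency condition (II): it ensures that the $L$- and $R$-data jointly describe the same collection of morphisms, so that the reconstructed filter $x$ is automatically full and no unclosed existential fullness clause is needed.
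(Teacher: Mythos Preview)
Your backward direction is essentially the paper's: the $\hat A$ for $A\in LC(U)$ form a disjoint open cover of $\+ F(M)$, and compactness forces it to be finite. The paper phrases this as ``$\hat U$ has finite index'', but the content is the same.

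Your forward direction is correct but genuinely different from the paper's. The paper invokes Prop.~\ref{prop:normal subgroup} to extract a cofinal descending chain $\langle N_k\rangle$ of \emph{normal} $^*$subgroups, forms the finite groups $G_k = LC(N_k)$, and exhibits $\+ F(M)$ as the inverse limit $\varprojlim_k G_k$, hence as a profinite group. You instead embed $\+ F(M)$ directly into the Tychonoff product $\prod_U (LC(U)\times RC(U))$ and characterise the image by clopen coherence and consistency conditions. Your argument is more elementary in that it avoids Prop.~\ref{prop:normal subgroup} entirely; the paper's argument, on the other hand, delivers the explicit profinite presentation of $\+ F(M)$ as an inverse limit of finite groups, which is reused immediately afterwards in the discussion of diagrams and Axiom~CC. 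Two minor remarks on your writeup: the sentence about ``the preimage under $\iota$ of the basic open set $\hat A$'' has the direction garbled (you mean that $\iota^{-1}$ of the coordinate-cylinder $\{L_{\mathbf r(A)}=A\}$ equals $\hat A$, and conversely $\iota(\hat A)$ is that cylinder intersected with the image); and for downward directedness you do not actually need Claim~\ref{cl: cap_coarse} or the identity $L_U\wedge L_V=L_{U\wedge V}$ --- condition (I) alone already gives $L_{U\wedge V}\sq L_U$ and $L_{U\wedge V}\sq L_V$, which is all a filter requires.
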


\begin{proof}  $\LA$: Given $U \in M$, by Claim~\ref{subset is correct}(d) $\hat U$ is open in $\+ F(M)$. So it has finite index. By (e) of the same claim, this implies that $LC(U)$ is finite. 

\n  $\RA$:  Using   Prop.~\ref{prop:normal subgroup},  let $\seq{N_k}\sN  k$ be a descending chain of normal $^*$subgroups such that $\forall U \exists k \, [ N_k \sqsubseteq U]$. Let $G_k$ be the group  induced by $M$ on $LC(N_k)$. We define an onto map $p_k \colon G_{k+1} \to G_k$ as follows: given $A\in LC(N_{k+1})$,   using (A2$\UA$) let  $p_k(A)= B$ where $A \sqsubseteq B\in LC(N_k)$. Each $p_k$ is a homomorphism by Axioms (A1, A2).

Let $G$ be the inverse limit:  $ G=\projlim_k (G_k, p_k)$. Thus \bc $G = ( \{f \in \prod_k G_k \colon \forall k \, f(k)= p_k(f(k+1))\}, \cdot)$,  \ec which is closed and hence  compact group subgroup of the Cartesian product of the $G_k$. We claim that $G \cong (\+F(M), \cdot)$ via the map $\Phi$ that sends $f \in G$ to the filter in $\+ F(M)$ generated by the $^*$cosets $f(k)$, namely \bc $\Phi(f) = \{C\in M \colon \, \exists k \, f(k) \sqsubseteq C\}$.      \ec  It is clear that $\Phi$ is a monomorphism. To show $\Phi$ is onto, given a full filter $x\in \+ F(M)$, for each $k$ there is $f(k)= B_k \in LC(N_k)$ such that $B_k \in x$. Then $f \in G$, and clearly $\Phi(f)= x$.

Note the $\hat N_k$ form a base of nbhds of $1$ in $\+ F(M)$. Since $\Phi^{-1}(\hat N_k)= \{ f \colon f(k) = N_k\}$ and the letter sets form a base of nbhds of $1$ in $G$, we get that $\Phi$ is a homeomorphism. 
%
Thus $\+ F(M)$ is compact. 
\end{proof}

\subsection*{Coarse groupoids versus diagrams,  for profinite groups} \

\n 
We will characterize the coarse groupoids $\+ M(G)$ obtained from    profinite groups $G$ by adding an axiom to the $^*$compactness condition.

Consider profinite  $G=\projlim_k (G_k, p_k)$   where the $G_k$ are finite groups and  each $p_k\colon G_{k+1} \to G_k$ is an epimorphism.  We  say that $\seq {G_k, p_k}$ is a \emph{diagram} for $G$. By the   proof of Prop.\ \ref{prop:profinite}, each diagram for $G$ can be seen as  a coarse groupoid $M$ with $\+ F(M) \cong G$. So a coarse groupoid for $G$ is not unique. Intuitively, they may  be open subgroups of $\+ F(M)$ that $M$ is missing. To avoid this we need another axiom. In the axiom to follow,   ``CC" stands for ``completeness in case of  compactness". We will see that it  implies in the compact case  that each open subgroup of $\+F(M)$ has a ``name".

\medskip
 
\n {\bf Axiom CC.} {\it Let $M$ be a $^*$compact coarse groupoid. 
 Let $N$ be a  normal $^*$subgroup of $M$.

\n If a set $\+ S \sub LC(N)$ is closed under products and inverses, then there is a $^*$subgroup $U$ such that $A \sq U \lra A \in \+ S$,  for each $A \in LC(N)$.}  

\medskip

Clearly $\+ M(G)$ for profinite $G$ satisfies this axiom.  The axiom implies the dual of Prop.\ \ref{fact:standard} in the compact case:
\begin{prop} \label{prop: descr compact} Let $M$ be a $^*$compact coarse groupoid satisfying Axiom~CC. Then $M \cong \+ M( \+ F(M)) $ via the map $A \mapsto \hat A$. \end{prop}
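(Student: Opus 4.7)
The plan is to show that $\Phi\colon A \mapsto \hat A$ is a structure-preserving bijection from $M$ onto $\+ M(\+ F(M))$. Since $M$ is $^*$compact, Proposition~\ref{prop:profinite} makes $\+ F(M)$ compact, so $\+ M(\+ F(M))$ is just the set of all open cosets of $\+ F(M)$. Structure preservation (partial order, inverse, product, and the left/right domain assignment) is already packaged in Claim~\ref{subset is correct}, whose part (a) also yields injectivity of $\Phi$. The entire work is therefore surjectivity.

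I would reduce surjectivity to the subgroup case: every open coset of $\+ F(M)$ has the form $x H$ for an open subgroup $H \le \+ F(M)$, and once we know $H = \hat U$ for some $^*$subgroup $U \in M$, Claim~\ref{correct form of left cosets} produces an $A \in LC(U)$ with $\hat A = xH$. So the heart of the proof is: \emph{every open subgroup $H \le \+ F(M)$ equals $\hat U$ for some $^*$subgroup $U \in M$}. For this, I would first locate a normal $^*$subgroup $N \in M$ with $\hat N \sub H$; such $N$ exists because Proposition~\ref{prop:normal subgroup} combined with the proof of Proposition~\ref{prop:profinite} shows that the sets $\hat N$ with $N$ a normal $^*$subgroup form a neighborhood basis of $1$. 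Since $\+ F(M)$ is compact, $\hat N$ has finite index, so $H$ is a finite union of left cosets of $\hat N$, and Claim~\ref{correct form of left cosets} represents
\[ H \;=\; \bigcup \{ \hat A : A \in \+ S \}, \qquad \+ S := \{ A \in LC(N) : \hat A \sub H \}. \]
Using that $N$ is normal (so $LC(N) = RC(N)$ and products $A \cdot B$ are always defined for $A, B \in LC(N)$) together with Claim~\ref{subset is correct}(b)(c), the fact that $H$ is a subgroup makes $\+ S$ closed under products and inverses. Axiom~CC then yields a $^*$subgroup $U \in M$ such that, for $A \in LC(N)$, $A \sq U$ iff $A \in \+ S$.

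The verification that $\hat U = H$ goes through axiom (A4). If $x \in \hat U$, then by fullness there is $A \in LC(N)$ with $A \in x$; since $A, U \in x$ the filter $x$ has a common lower bound $\sq A, U$ that is nonzero, so $A \wedge U \neq 0$, and (A4) applied to $N \sq U$ forces $A \sq U$, i.e., $A \in \+ S$, whence $x \in \hat A \sub H$. Conversely, if $x \in H$, then $x \in \hat A$ for some $A \in \+ S$, and $A \sq U$ gives $U \in x$ by upward closure, i.e., $x \in \hat U$. The main obstacle is precisely this bridge step: setting up $\+ S$ so that Axiom~CC translates the a priori topological object $H$ into a combinatorial name $U \in M$. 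Once this is done, passage from subgroups to arbitrary cosets and the verification of structure preservation are direct citations of Claims~\ref{correct form of left cosets} and~\ref{subset is correct}.
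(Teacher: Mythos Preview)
Your proposal is correct and follows essentially the same route as the paper: reduce to surjectivity via Claim~\ref{subset is correct}, handle open subgroups first by finding a normal $^*$subgroup $N$ with $\hat N \sub H$, writing $H$ as a finite union $\bigcup_{A \in \+ S} \hat A$ with $\+ S \sub LC(N)$, applying Axiom~CC to obtain $U$, and then passing to arbitrary cosets with Claim~\ref{correct form of left cosets}. The only difference is that where the paper simply writes ``Clearly $\hat U = \+ U$'', you spell out the verification via axiom~(A4) and upward closure of filters; this is a welcome elaboration rather than a different argument.
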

\begin{proof} 
 By Claim~\ref{subset is correct} it suffices to show that the map is onto. 

Firstly,  let $\+ U$ be an open subgroup of $\+ F(M)$. By definition of the topology and Prop.~\ref{prop:normal subgroup},  there is a normal $^*$subgroup  $N$ in  $M$ such that $\hat N \sub \+ U$.  By Prop.\ \ref{prop:profinite},  $\+ F(M)$ is compact, so $\+ U$ is the union of finitely many cosets of $\hat N$.  By Claim~\ref{correct form of left cosets}  each such coset has the form $\hat A$ for some $A \in LC(N)$. Let $\+ S$ be the set of such~$A$ in $LC(N)$. The set $\+ S$ is closed under product and inverses since $\+ U$ is a subgroup, using Claim~\ref{subset is correct}. So there is a $^*$subgroup $U$ as in Axiom~CC. Clearly $\hat U = \+ U$. 

Secondly, given a left coset $\+ B$ of   an open subgroup $\+ U$ in $\+ F(M)$, by Claim~\ref{correct form of left cosets}  we have $\+ B = \hat B$ for some $B \in LC(U)$ as required.
\end{proof}

\subsection*{T.d.l.c.\  groups and $^*$locally compact coarse groupoids} 
\

\n We say that  a coarse groupoid $M$ is \emph{$^*$locally compact} if for each $^*$subgroup  $K\in M$   the coarse  subgroupoid induced on $\{ A \colon \,  A \sq K\}$ is $^*$compact. Note that  $\+ M(G)$ for t.d.l.c.\ $G$ has this property:  by van Dantzig's theorem (that every t.d.l.c.\ group has an open compact subgroup) $\+ M(G)$ is non-empty, and by definition $\+ M(G)$ consists of compact open cosets. 

 We call $M$ \emph{weakly $^*$locally compact} if  for \emph{some}   $^*$subgroup   $K\in M$   the inductive  subgroupoid on $\{ A \colon \,  A \sq K\}$ is $^*$compact.

\begin{prop}  \label{prop:tdlc} Let $M$ be a coarse groupoid.  Then 

  $M$ is weakly $^*$locally compact $\LR$  $\+ F(M)$ is locally compact   \end{prop}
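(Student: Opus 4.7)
My plan is to prove both implications by analyzing the coarse subgroupoid $M_K = \{A \in M \colon A \sq K\}$ and comparing it with the open subgroup $\hat K \le \+F(M)$; the main tool is Prop.~\ref{prop:profinite}.

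For the forward direction, assume a witness $K$. I first check that $M_K$ is itself a coarse groupoid, with top element $K$: it is closed under inverses by (A1) and under defined products by (A3) (using $K = K \cdot K$), and the axioms (A1)--(A5) descend from $M$. By hypothesis $M_K$ is $^*$compact, so Prop.~\ref{prop:profinite} gives compactness of $\+F(M_K)$. I would then construct a homeomorphism $\Psi \colon \+F(M_K) \to \hat K$ by $x \mapsto \tilde x = \{B \in M \colon \exists A \in x \, \, A \sq B\}$, with inverse $y \mapsto y \cap M_K$. The routine but somewhat delicate checks are: fullness of $\tilde x$ (given $V \in M$, apply fullness of $x$ to $V \wedge K \in M_K$ to obtain a left $^*$coset of $V \wedge K$ in $x$, then lift via (A2$\UA$) to a left $^*$coset of $V$); well-definedness of the restriction (for $V \sq K$, if $B$ is the left $^*$coset of $V$ in $y$, then $B \wedge K$ is a left $^*$coset of $V \wedge K = V$ by Claim~\ref{cl: cap_coarse}, and uniqueness of left $^*$cosets via (A4) forces $B = B \wedge K \sq K$); and continuity, using that $\hat B \cap \hat K$ corresponds under $\Psi$ to the basic open set of $\+F(M_K)$ indexed by $B \wedge K$. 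Compactness of $\hat K$ then gives local compactness of $\+F(M)$ at the identity, hence everywhere by translation.

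For the converse, assume $\+F(M)$ is locally compact. By Def.~\ref{def:topo} together with Claim~\ref{subset is correct}(d), the open subgroups form a base of neighborhoods of the identity, so $\+F(M)$ is totally disconnected. A compact neighborhood of $1$ therefore contains an open subgroup $\+V$, and since open subgroups are automatically closed, $\+V$ itself is compact. By definition of the topology, $\+V \supseteq \hat K$ for some $^*$subgroup $K \in M$; then $\hat K$ is a closed subgroup of $\+V$, hence compact. It remains to show that $M_K$ is $^*$compact. Fix a $^*$subgroup $U \sq K$. By Claim~\ref{subset is correct}(e) and Claim~\ref{correct form of left cosets}, the map $A \mapsto \hat A$ gives a bijection between $LC(U)$ and the set of left cosets of $\hat U$ in $\+F(M)$, with injectivity following from (A4) (in its left-coset form) combined with Claim~\ref{subset is correct}(a). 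That same claim gives $\hat A \sub \hat K \LR A \sq K$, so the bijection restricts to one between $\{A \in LC(U) \colon A \sq K\}$ and the left cosets of $\hat U$ contained in $\hat K$. Since $\hat U$ is open in the compact set $\hat K$, only finitely many such cosets exist, and similarly for $RC(U)$, so $M_K$ is $^*$compact.

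The main obstacle is the forward direction, specifically checking that $x \mapsto \tilde x$ lands in $\hat K$ with the full-filter property and is mutually inverse to restriction. The crux is the interaction between the lifting axiom (A2$\UA$) and meeting with $K$: one must see that the meet $B \wedge K$ of a left $^*$coset of $V$ with $K$ is automatically a left $^*$coset of $V \wedge K$ (Claim~\ref{cl: cap_coarse}), and that (A4) forces this meet to equal $B$ itself whenever $B$ is already $\sq K$. The converse is structurally simpler, its only subtlety being that the $LC(U)$-to-cosets-of-$\hat U$ bijection restricts cleanly to the subsets defined by ``contained in $\hat K$''.
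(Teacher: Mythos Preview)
Your proof is correct and follows essentially the same strategy as the paper: reduce to the subgroupoid $M_K$ and invoke Prop.~\ref{prop:profinite}. The paper's proof is extremely terse---it simply asserts $\+ F(M_K) \cong \hat K$ without justification and applies Prop.~\ref{prop:profinite} in both directions---whereas you supply the details of this isomorphism (the maps $x \mapsto \tilde x$ and $y \mapsto y \cap M_K$) that the paper leaves implicit. For the $\LA$ direction you diverge slightly: rather than invoking the isomorphism and then the $\LA$ half of Prop.~\ref{prop:profinite}, you argue directly that $LC_{M_K}(U)$ is finite via the bijection $A \mapsto \hat A$ onto cosets of $\hat U$ inside the compact $\hat K$; this is marginally more self-contained but amounts to unfolding the same idea. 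Your replacement of the van Dantzig citation by the observation that open subgroups form a neighbourhood base (hence some $\hat K$ sits inside a compact neighbourhood) is also fine and arguably more honest in this axiomatic setting.
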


\begin{proof}  

$\LA$:  By van Dantzig's theorem, $\+ F(M)$ has  a compact open subgroup~$\+ L$. Let $K \in M$ be a $^*$subgroup such that $\hat K \sub \+ L$. As above let $M_K$ be the coarse subgroupoid of $M$  on $\{ A \colon \,  A \sq K\}$. Then $\+ F(M_K) \cong \hat K$. So $\+ F(M_K)$ is compact. Hence $M_K$ is $^*$compact by Prop.\ \ref{prop:profinite}. 
 $\RA$. 
Let $M$ be weakly $^*$locally compact via $K$. Then $\+ F(M_K)\cong \hat K$ is compact. Since $\hat K$ is an open subgroup of $\+ F(M)$ this makes $\+ F(M)$ locally compact. 
\end{proof}

\begin{example} (a) If $G$ is countable discrete group, then $\+ M(G)$ consists of the isomorphisms between finite subgroups. 

\n (b) Let $G= ( \mathbb Q_p,+)$. The   proper open subgroups are compact, and are all of the form $U_r= p^{r}\ZZ_p$ for some  $r\in \ZZ$. In this abelian setting each morphism is an endomorphism. The group $G_r$ of endomorphisms  $A \colon \, U_r \to U_r$ has the structure of $C_{p^\infty}$ (the direct limit of the cyclic groups $C_{p^n}$ with the canonical embeddings). Let $f(x) = px$ for $x \in C_{p^\infty}$ and view $f$ as a map $  G_r \to G_{r+1}$. Then for $A \in LC(U_r), B\in LC(U_{r+1})$, the ordering relation $A \sq B$ is equivalent to  $f(A)= B$. 
We see that the coarse groupoid is a bit like a diagram for a profinite group, but goes not only to closer approximations of elements (backwards), but also to less close ones (forward). 

\n (c) $G_d= \Aut(T_d)$ for $d\ge 2$. This is the group of automorphism of the $d$-regular tree $T_d$, defined as an undirected graph without a specified root, first studied by Tits. It is known that each proper open subgroup is compact. Each compact  (open or not)  subgroup is contained in   the stabilizers of a vertex, or  the stabilizers of an edge (which are compact open). See \cite[p.\ 12]{Figa.Nebbia:91}. It would be interesting to describe more of  the structure of $\+ M(G_d)$. 
\end{example}

Recall that in the locally compact setting, the coarse groupoid  $\+ M(G)$ has as a domain the compact open cosets of $G$. We replace Axiom CC from the compact setting by a variant that works in the more general setting.
\medskip 
 
\n {\bf Axiom CLC.}  {\it  Let $M$ be a $^*$locally compact coarse groupoid.  Let $N$ be a   $^*$subgroup of a $M$.  

If a finite set $\+ S \sub LC(N)$ is closed under products and inverses, then there is a $^*$subgroup $U$ such that $A \sq U \lra A \in \+ S$ for each $A \in LC(N)$.  }

\medskip 
Clearly, if $G$ is t.d.l.c.\ then  $\+ M(G)$      satisfies this axiom. We verify  that the axiom characterizes the  $^*$locally compact coarse groupoids obtained in this way.

\begin{prop} Let $L$ be a $^*$locally compact coarse groupoid satisfying Axiom~CLC. Then $L \cong \+ M( \+ F(L)) $ via the map $A \mapsto \hat A$. \end{prop}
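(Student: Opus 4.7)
The plan is to adapt Proposition~\ref{prop: descr compact} by first producing, inside any given compact open subgroup $\+U$ of $\+F(L)$, a $^*$subgroup $V \in L$ whose $\hat V$ is normal in $\+U$, and then invoking Axiom~CLC. Claim~\ref{subset is correct} already makes the map $A \mapsto \hat A$ injective and structure-preserving, and for each $^*$subgroup $U \in L$ the coarse subgroupoid $L_U := \{A : A \sq U\}$ is $^*$compact, so $\hat U \cong \+F(L_U)$ is compact by Proposition~\ref{prop:profinite}. Hence $\hat A$ is always a compact open coset, and only surjectivity of $A \mapsto \hat A$ needs checking.

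For surjectivity, I would fix a compact open subgroup $\+U$ of $\+F(L)$ and a $^*$subgroup $V_0 \in L$ with $\hat{V_0} \sub \+U$ (possible since the $\hat V$ form a neighbourhood basis of $1$). Compactness of $\+U$ gives a finite decomposition $\+U = g_1 \hat{V_0} \sqcup \dots \sqcup g_n \hat{V_0}$ with $g_1 = 1$; by Claim~\ref{correct form of left cosets} each $g_i \hat{V_0}$ equals $\hat{A_i}$ for a unique $A_i \in LC(V_0)$, and then Claim~\ref{subset is correct}(b,c) gives $\hat{V_i} = g_i \hat{V_0} g_i^{-1}$ for $V_i := \mathbf d(A_i) = A_i A_i^{-1} \in L$. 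Setting $V := \bigwedge_{i=1}^n V_i$ and using that finite meets of $^*$subgroups match intersections of their filter sets, I obtain $\hat V = \bigcap_i \hat{V_i}$, the normal core of $\hat{V_0}$ in $\+U$; in particular $\hat V$ is an open subgroup of $\+U$, normal in $\+U$.

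I would then repeat the decomposition with $V$: $\+U = \bigsqcup_{j=1}^m h_j \hat V$ with $h_1 = 1$ and $h_j \hat V = \hat{B_j}$, $B_j \in LC(V)$. Normality of $\hat V$ in $\+U$ makes each $B_j$ also a right coset of $V$, so the groupoid products $B_j B_k$ are defined, and by Claim~\ref{subset is correct}(b,c) the finite set $\+S := \{B_1, \dots, B_m\}$ inherits the group structure of $\+U/\hat V$ and is closed under products and inverses. Axiom~CLC applied to $V$ and $\+S$ then produces a $^*$subgroup $U \in L$ with $A \sq U \LR A \in \+S$ for every $A \in LC(V)$. This $U$ satisfies $\hat U = \+U$: the inclusion $\hat U \supseteq \+U$ follows from $V = B_1 \in \+S$ and Claim~\ref{subset is correct}(a), while for $x \in \hat U$, Claim~\ref{axiom existence of full filters} provides $B \in x \cap LC(V)$, Axiom~(A4) forces $B \sq U$, and hence $B \in \+S$ and $x \in \hat B \sub \+U$. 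A general compact open coset $\+B = x \+U$ is then $\hat B$ for some $B \in LC(U)$ by Claim~\ref{correct form of left cosets}.

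The main obstacle is manufacturing the normal $^*$subgroup $V$. Axiom~CLC requires $\+S$ to be closed under the groupoid products, which needs each $B \in \+S$ to be both a left and a right coset of $V$, i.e., it needs $\hat V$ to be normal in $\+U$. Recovering such $V$ as the meet of the groupoid-conjugates $\mathbf d(A_i)$ of an arbitrary initial $V_0$ is the crucial step, and it works because $\hat{\mathbf d(A_i)} = g_i \hat{V_0} g_i^{-1}$ by Claim~\ref{subset is correct}(b,c) and because finite meets in the semilattice of $^*$subgroups commute with $\hat\cdot$. Beyond that, the argument is a routine transcription of the compact case, with $\+U$ playing the role of $\+F(M)$.
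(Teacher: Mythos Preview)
Your proof is correct and follows the same outline as the paper's: reduce to surjectivity via Claim~\ref{subset is correct}, produce inside a given compact open $\+U$ a $^*$subgroup whose hat is normal in $\+U$, and then apply Axiom~CLC to its finite set of cosets. The only procedural difference is that you construct the normal core explicitly as the meet of the finitely many conjugates $V_i = A_i A_i^{-1}$, whereas the paper forms the coarse subgroupoid $L_{\+U} = \{A \in L : \hat A \sub \+U\}$, notes that $RC(W)$ is finite there, and invokes Proposition~\ref{prop:normal subgroup} to obtain the normal $N$ abstractly.
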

\begin{proof} 
 As in Prop.\ \ref{prop: descr compact}, by Claim~\ref{subset is correct} it suffices to show that the map $A \mapsto \hat A$ is onto.  
 
 Firstly let $\+ U$ be a compact open subgroup of $\+ F(L)$. There is $W \in L$ such that $\hat W \sub \+ U$. Let 
 \bc $L_\+ U= \{ A \in L \colon \, \hat A \sq \+ U\}$. \ec
 Clearly $L_\+ U$ is a coarse  subgroupoid of $L$. In $L_\+ U$, $RC(W)$ is finite, so by Prop.\ \ref{prop:normal subgroup} there is $N \sq W$ such that $\+ S:= LC(N)= RC(N)$ in $L_\+ U$.   Clearly the hypothesis of Axiom~CLC applies to $\+ S$, so we get a $^*$subgroup $U$. Then $\hat U = \+ U$.

 Secondly, given a left coset $\+ B$ of   an open subgroup $\+ U$ in $\+ F(L)$, by Claim~\ref{correct form of left cosets}  we have $\+ B = \hat B$ for some $B \in LC(U)$ as above.
\end{proof}

\subsection*{$\+ M$ and $\+ F$ as functors}

We will view closed subgroups of $\S$   (also called non-Archimedean groups)    as  a category where the morphisms $f\colon G \to H$ are the continuous   epimorphisms. The kernel  of $f$ is a closed normal subgroup. If   $G$ is compact/locally compact then  $H$ has the same property.

\medskip

\n {\bf Covariant functor, for the   locally compact case.}  
The open mapping theorem for Hausdorff topological groups  states that a surjective continuous homomorphism of a  $\sss$-compact   group  onto a  Baire (e.g., a locally compact)  group  is an open mapping. (This is proved using Baire category.) Each (separable) t.d.l.c.\ group is $\sss$-compact again by van Dantzig.
 So if $G$ is locally compact and $f\colon G \to H$ is  onto,  then  for each compact open $A \sub G$, the image $f(A)$ is open (and of  course compact) in $H$. 

On t.d.l.c.\ groups with continuous epimorphisms we can now  view the operator $\+ M$ as a covariant functor $\+ M_+$. If $f\colon G \to H$ then $\+ M_+(f) \colon \+ M(G) \to \+  M(H)$ is given by $A \mapsto f(A)$. 
Here we view coarse groupoids as a weak category  $\+ C_{weak}$ where the morphisms $r \colon \, M \to N$ preserve the groupoid structure and the partial order in the forward direction only.  
\medskip

\n {\bf Contravariant functor.} 
If $f\colon G \to H$ is an epimorphism of non-Archimedean groups,   then we have a map $\+ M_-(f) \colon \, \+ M(H) \to \+ M(G)$, where $\+ M_- (f)(A)= f^{-1}(A)$.  This clearly preserves the inductive groupoid structure. 
 We want to identify the right type of morphisms  on coarse groupoids so that $\+M$ is a contravariant functor with inverse $\+ F$ (suitably extended to morphisms) for the classes of groups of interest.   Consider a  map $\+ M_-(f)$ above  and let  $R \sub \+M(G)$ be its range.  $R$ consists of all open [compact] cosets of subgroup of $G$ that contain  the kernel of $f$. So, 

\bi \item $R$  is  closed upwards, and 

\item $R$ is closed under conjugation of subgroups. Thus if $\ex A  [ _U A_V]$ and $U \in R$ then $V \in R$. \ei

So we have to take the strong category $\+ C_{str}$ with objects the (countable)   coarse groupoids and with morphisms   $q \colon \, N \to M$ that preserve the groupoid and the meet semilattice structure (in particular, they preserve the ordering and hence have to be $1-1$) and have   range with  the two  properties above. For such an $q \colon \, N \to M$   we can define  $\+ F(q) \colon \, \+ F(M) \to \+ F(N)$   by $x \mapsto q^{-1}(x)$.
 
\begin{claim} Suppose $q\colon N \to M$ is a morphism in $\+ C_{str}$. 

\n Then $\+ F(q) \colon \+ F(M) \to \+ F(N)$   is a continuous epimorphism.  \end{claim}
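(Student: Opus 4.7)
The plan is to verify four properties of $\+F(q)$: well-definedness as a map into $\+F(N)$, the group homomorphism property, continuity, and surjectivity.

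\textbf{Well-definedness and homomorphism.} For $x\in \+F(M)$, I need $q^{-1}(x)$ to be a full filter on $N$. Upward closure in $N$ and closure under $\wedge$ follow because $q$ preserves the meet semilattice structure (and hence the order $\sq$). For fullness, given a $^*$subgroup $U$ of $N$, its image $q(U)$ is a $^*$subgroup in the range $R$, and by fullness of $x$ there is $A\in LC(q(U))\cap x$; combining that $R$ is a sub-groupoid closed upwards and under conjugation of $^*$subgroups with injectivity of $q$, one obtains $A=q(A')$ for a unique $A'\in LC(U)$, witnessing fullness. The right coset case is symmetric. For the homomorphism property, because $q$ preserves composition and inversion in the groupoid, a direct unfolding of the definitions in $\+F(M),\+F(N)$ yields $q^{-1}(x\cdot y)=q^{-1}(x)\cdot q^{-1}(y)$ and $q^{-1}(x^{-1})=q^{-1}(x)^{-1}$; the nontrivial inclusion uses (A2) to refine witnesses so that they lie in $R$.

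\textbf{Continuity} is immediate: the topology on $\+F(N)$ has subbase $\{\hat{A'}\colon A'\in N\}$, and
\[
\+F(q)^{-1}(\hat{A'})=\{x\in \+F(M)\colon q(A')\in x\}=\widehat{q(A')},
\]
which is basic open in $\+F(M)$.

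\textbf{Surjectivity} is the main obstacle. Given $y\in\+F(N)$, I will build $x\in\+F(M)$ with $q^{-1}(x)=y$. Start with $F_0$, the upward closure in $M$ of $q(y)$; since $q$ is injective and preserves $\wedge$, $F_0$ is a proper filter. The task is to extend $F_0$ to a full filter. Enumerate the $^*$subgroups $U'_0,U'_1,\ldots$ of $M$ and inductively select a left and right $^*$coset of each $U'_n$ compatible with $F_0$ and with the previous choices. Compatibility with a single $B\in F_0$ can be arranged: let $C$ be the meet of $B$ with the finitely many previous selections; set $W=\rdom(C)\wedge U'_n$; use (A2$\DA$) to find $D\in LC(W)$ with $D\sq C$; then (A2$\UA$) lifts $D$ to $A\in LC(U'_n)$ with $D\sq A$, so $A\wedge C\sqsupseteq D\neq 0$. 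A diagonal interlacing through the countably many elements of $F_0$ yields a selection compatible with all of $F_0$ simultaneously, producing a full filter $x\supseteq F_0\supseteq q(y)$. Then $q^{-1}(x)\supseteq y$, and equality follows because two full filters on $N$ comparable under inclusion must coincide: if $y\subseteq y'$, then for any $B\in y'$ with $\rdom(B)=V$, the left $^*$coset $B$ is the unique left $^*$coset of $V$ in $y'$ by (A4), and equally the unique one in $y$, so $B\in y$. A cleaner alternative route extends $F_0$ via Zorn's lemma to a maximal filter $x$, then establishes the auxiliary lemma that every maximal filter in a coarse groupoid is full, using (A5) together with (A2) as above.
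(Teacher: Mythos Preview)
The paper states this claim without proof, so there is no argument to compare against; your four-part decomposition (well-definedness, homomorphism, continuity, surjectivity) is the natural one, and your continuity argument via $\+ F(q)^{-1}(\hat{A'})=\widehat{q(A')}$ is clean and correct.

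There are, however, two genuine gaps.

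\emph{Well-definedness.} You assert that for a $^*$subgroup $U$ of $N$ and the unique $A\in LC_M(q(U))\cap x$, one obtains $A=q(A')$ for some $A'\in LC_N(U)$. This requires that \emph{every} left $^*$coset in $M$ of a $^*$subgroup in $R$ again lies in $R$. The properties you invoke --- $R$ is a sub-groupoid, upward closed, and closed under conjugation of $^*$subgroups --- do not obviously yield this; conjugation-closure only gives you that the left domain $\ldom(A)$ is in $R$, not that $A$ itself is. This may be a deficiency in the paper's definition of $\+ C_{str}$ rather than in your argument, but you should flag it rather than glide over it.

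\emph{Surjectivity.} Your ``cleaner alternative route'' via Zorn's lemma is wrong: the paper itself notes, in the example following Proposition~\ref{fact:standard} (the $\Aut(\mathbb Q,<)$ example), that not every maximal filter is full --- a maximal order-preserving partial map on $\mathbb Q$ need not be a bijection. As for the direct construction, the phrase ``diagonal interlacing'' hides a real difficulty: a coset $A_n$ chosen at stage $n$ to be compatible with some $B_n\in F_0$ and with the previous selections may fail to meet a strictly smaller $B_{n+1}\sq B_n$ in $F_0$ encountered later. Your inductive invariant ``compatible with all of $F_0$ so far'' does not propagate, because once you step outside $R$ by choosing $A_n$, the meet $A_n\wedge B_{n+1}$ can be $0$ even though $A_n\wedge B_n\neq 0$. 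A correct argument needs to interleave the refinement along a cofinal descending chain in $q(y)$ \emph{before} committing to each new coset, and to maintain the stronger invariant that the current partial selection has nonzero meet with every element of $F_0$; you have not shown that this can be done.
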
 


\section{Nies: Closed subgroups of $\S$ generated by their permutations of finite support}
\newcommand{\FS}{SF(\omega)}
Let $\FS$ denote the groups of permutations of $\omega$ that have finite support. We note that $\FS$ plays a role in $\S$ similar to the role that $\mathbb Q$ plays in $\R$.
 Clearly $\FS$ is dense in $\S$. The inherited topology has as a basis of neighbourhoods  of $1$ the open subgroups $\FS \cap U_n $ of $\FS$, where $U_n \le_o \S$ is the pointwise stabilizer of $\{0, \ldots, n\}$. For example the subgroup $T$ algebraically generated by  $\{(01) (2n \, 2n+1) \colon n \ge 1\}$ of $\FS$  is not closed;  its closure  is   $T \cup \{(01)\}$. 

Given a closed subgroup $G$ of $\S$, it is of interest to find out how much of~$G$ can be recovered from  the closed  subgroup $G \cap \FS$ of $\FS$.  B.\ Majcher-Iwanov~\cite{Majcher:20}  called $G \cap \FS$ the \emph{finitary shadow} of $G$.
Often $G \cap \FS$  will be trivial, for instance if $G$ is torsion free. 

Note that a closed subgroup $G$ of $\S$  is compact iff each $G$-orbit is finite. 
Majcher--Iwanov~\cite{Majcher:20}  studied the distribution of finitary shadows of compact subgroups of $\S$ within the subgroup lattice  of $\FS$, and made connections to cardinal characteristics. 

One  can also start from a   subgroup $H$ of $\FS$ and study how it is related to    its closure $G= \ol H$ in $\S$.   (Closures will be taken    in $\S$ unless otherwise stated.) 

Firstly, for each open subgroup $U$ of $H$, its closure $\ol U$ is open in $G$. For, suppose $U_n \cap H \le U$. Since  $U_n$ is closed  we have  $\overline {U_n \cap H} = U_n \cap \ol H$.  Hence  $U_n \cap G \le \ol U$, so $\ol U$ is open in $G$.

Conversely, each   open subgroup of $G$ is   given as the closure of the   open  subgroup $L \cap H$ of $H$:

\begin{lemma} Let $H \le \FS$ and let $G= \ol H$ be the closure of $H$ in $\S$. Then  $L = \ol {L \cap H}$ for each open subgroup $L$ of $G$.  \end{lemma}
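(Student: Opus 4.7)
The plan is very short: essentially the statement unwinds into a standard fact about closures of subsets under open relative topologies, once one remembers that open subgroups of topological groups are automatically closed.

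First I would observe the easy inclusion $\overline{L\cap H}\subseteq L$. Since $L$ is an open subgroup of the topological group $G$, it is also closed in $G$; and $G$ is closed in $\S$, so $L$ is closed in $\S$ as well. Therefore the closure (taken in $\S$, per the paper's convention) of $L\cap H\subseteq L$ is contained in $L$.

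For the nontrivial inclusion $L\subseteq \overline{L\cap H}$, I would pick an arbitrary $g\in L$ and construct a sequence in $L\cap H$ converging to $g$. Since $g\in L\subseteq G=\overline{H}$ and $\S$ is metrizable, there is a sequence $(h_k)_{k\in\NN}$ in $H$ with $h_k\to g$ in $\S$. The key point is that the $h_k$ can be arranged to lie in $L$ from some point on: write $L=V\cap G$ for some open $V\subseteq \S$ (possible because $L$ is relatively open in $G$). Since $h_k\to g\in V$, for all sufficiently large $k$ one has $h_k\in V$; combined with $h_k\in H\subseteq G$, this gives $h_k\in V\cap G=L$, hence $h_k\in L\cap H$. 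Thus $g$ is a limit of a sequence in $L\cap H$, i.e.\ $g\in \overline{L\cap H}$.

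There is no serious obstacle here; the only subtle ingredient is recognizing that openness of $L$ in $G$ is precisely what lets the approximating sequence from $H$ be eventually pushed into $L$. In fact the same argument works for any topological group $G$ with a dense subgroup $H$ (not merely $H\le \FS\le \S$): every open subgroup $L\le G$ satisfies $L=\overline{L\cap H}$. So the lemma is really a general statement about dense subgroups, and the specific role of $\FS$ is only the context supplied by the preceding paragraph.
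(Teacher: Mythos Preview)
Your proof is correct and follows essentially the same approach as the paper: both verify the easy inclusion via closedness of $L$, and for the nontrivial inclusion approximate $g\in L$ by elements of $H$ which, by openness of $L$ in $G$, must eventually lie in $L\cap H$. The only cosmetic difference is that the paper works directly with the explicit basic neighbourhoods $U_n$ rather than invoking metrizability and an abstract open $V$ with $L=V\cap G$.
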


\begin{proof} Since $L$ is closed in $G$, we only need to verify the inclusion $\sub$.

Let $g \in L$.
Since $L$ is open in $G$, there is $k$ such that for each $v$, if $g v^{-1}\in U_k$ then $v \in L$. Now let $n \ge k$ be arbitrary.  Since $G= \ol H$, there is $r \in H$ such that $g r^{-1} \in U_n$. So $r \in L$. This shows $g \in \ol{L \cap  H}$.
\end{proof} 

\begin{remark} \label{rem:sum} To summarize, there is a  natural isomorphisms between the open subgroups $L$ of $G$ and the open subgroups $U$ of $H$,  given by $L \to L \cap H$, with inverse $U \to \ol U$. \end{remark}

\subsection*{The compact case}
   The  textbook on permutation groups by Dixon and Mortimer~\cite[Lemma 8.3D]{Dixon.Mortimer:96} contains a proof of   the following equivalences for a group $H \le \FS$:

\medskip
(i) every $H$-orbit is finite (i.e., $\ol H$ is compact)

(ii) $H$ only has finite conjugacy classes (such $H$ is called an FC-group)

(iii) $H$ is residually finite.
\medskip

The implication (i)$\to$(ii) is pretty elementary: for each $x \in H$ there is a finite $H$-invariant set $\Delta$ such that the support of $x$ is contained in $\Delta$. The number of conjugates of $x$ is then bounded by the number of conjugates of $x\mid \Delta$ (restriction to $\Delta$) in $S_\Delta$.  (ii)$\to$(iii) also easy, the remaining implication (iii)$\to$(i) is harder. It was proved in \cite[Thm.\ 2]{Neumann:76}. 

It would be interesting to determine the complexity of the topological   isomorphism relation  for closed subgroups $H$ of $\FS$ that are FC-groups. Since $H = \ol H \cap \FS$, where the closure is taken in $\S$ as usual, it is Borel reducible to topological isomorphism of profinite groups, which by Kechris et al.\ \cite{Kechris.Nies.etal:18} is Borel isomorphic to countable graph isomorphism. However,  the groups employed  there  to reduce graph isomorphism do not have a finitary  shadow satisfying the conditions  above.   \cite{Kechris.Nies.etal:18} uses an extension to the topological setting  of Mekler's construction to code  a countable graph into a countable step 2 nilpotent group of exponent a fixed odd prime. Mekler's  groups being FC would mean that the coded graph is co-locally finite (each vertex connected to cofinitely many vertices). But for technical reasons related to the definability of the vertex set, Mekler's method  only can encode    graphs that are triangle and square free, and such graphs are of course not co-finitary.

\subsection*{The oligomorphic  case} 
 
Recall that a group $H \le \S$ is called oligomorphic if for each $r$ its action on $\omega$ has only finitely many $r$-orbits. It is open whether $E_0$ can be Borel reduced to topological isomorphism of closed oligomorphic subgroups of $\S$. See Section~\ref{s:OQ nonarch}, or~\cite{Nies.Schlicht.etal:19} for background.

For $H \le \FS$, note that $H$ is oligomorphic  iff $\ol H$ is.  The hope was that  one can reduce $E_0$ to topological isomorphism of closed subgroups of $\FS$, which should be easier to control. To be useful, this actually assumed an affirmative answer to the following question: \begin{question} \label{q:olig FS} Is the following true?

Let $H_0, H_1 \le_c \FS$ be oligomorphic. Then \bc $H_0, H_1$ are homeomorphic $\LR$  $\ol H_0, \ol H_1$ are homeomorphic.  \ec
\end{question}
For the implication $\RA$, we note that by Remark \ref{rem:sum} the structures of open cosets for $\ol H_i$ and $H_i$ are isomorphic for $i= 0,1$. That is $\+ M(\ol H_i) \cong \+ M(H_i)$.  By hypothesis $\+ M(H_0) \cong \+ M(H_1)$. Since the $\ol H_i$ are closed oligomorphic subgroups of $\S$,  $\+ M(\ol H_0) \cong \+ M(\ol H_1)$ implies that they are topologically isomorphic by~\cite{Nies.Schlicht.etal:19}.

  However, the intended application  doesn't not    work, because  oligomorphic subgroups  $H$ of $\FS$ are very  restricted, and in particular there are only countably many up to isomorphism. 

Some examples of such groups  $H$   come to mind. First take  permutations of finite support preserving the  evens. This group is topologically isomorphic to $\FS\times \FS$.  More generally, take  a finite power of $\FS$. Another type of example is  letting $H$ be the automorphism group  of a countably infinite structure obtained taking a finite structure $M$, and have an equivalence relation $E$ with a ``copy" of $M$ on each class. For instance,  take one unary function symbol $f$, and let $M$   be a finite  cycle given by $f$. If $\phi$ is the permutation that is the union of all these cycles of fixed length, then $H$  is the centralizer of $\phi$ in $\FS$.

Let $R$ be the random graph and $Q$ the random linear order.  In contrast, $\Aut(R)$ and $\Aut(Q)$ have no nontrivial members of finite support at all (not hard to check). Even $G= \Aut (E)$ where $E$ is an equivalence relation with just  two infinite classes, is not the closure of   $G\cap \FS$ because an automorphism of finite support cannot leave an equivalence class. More generally, if $R$ is a definable relation and $\phi(a)= b$ where $\phi$ is  an automorphism of finite support, then $R(a) $ almost equals $R(b)$.

Towards a full classification of oligomorphic subgroups $H$ of $\FS$, we use some structure theory of subgroups of $\FS$ developed in the 1970s by Peter Neumann, e.g.\ \cite{Neumann:76}, and in two short papers by Dan Segal, independently.   Here I'm mostly using the notes on finitary permutation groups by Chris Pinnock, available at \url{chrispinnock.com/phdpublications/}. Unless otherwise stated references to theorems etc refer to those notes.

Let us begin by  assuming that $H \le \FS $ is 1-transitive.  
 The Jordan-Wielandt Theorem says for such a group $H$ that if 
  $H$ is primitive then $H= \FS$ or $H$ equals the group of alternating permutations of finite support (which has index 2 in $\FS$). So we can assume otherwise. Let $B$ be a  block of imprimitivity.   $B$ has to be finite (Lemma 2.2 in Pinnock), simply because there is  a permutation $\tau \in H$ moving $B$ to a disjoint set $\tau(B)$, so that $B$ is contained in the support of $\tau$. The equivalence relation $E$ with classes made up of the translates of $B$ is $H$-invariant, and hence a union of 2-orbits. Since there are only finitely many 2-orbits, there must be a maximal block of imprimitivity. Then $H$ acts primitively on $\Omega/E$ and each induced permutation there has finite support. So it acts highly transitively on it. This is the  second type of example above, the same  finite model in each equivalence class of $E$. Note that $H$ is a subgroup of $G_0 \wr R$ where $R= \FS $ or $R$ is the alternating group (Thm 2.3).
  
  If $H$ is not 1-transitive we are in the case of the first example above. It is known that  we get only finite products of 1-transitive groups, and  a finite group.

 For a more detailed treatment, see Iwanow~\cite{Ivanov:02} who works in the more general setting of automorphism groups of countable saturated structures. He  uses the concept of cell, a permutation group that preserves an equivalence relation with all classes finite, and induces the full symmetric group on the equivalence classes. A 2-cell is    permutation group $G\le_c \S$ such that there exists a partition $\omega $   into $G$- invariant classes $Y_i$ such that for any infinite $Y_i$  the group   induced by $G$ on$Y_i$  is a cell and is also induced by the pointwise stabilizer of the complement of $Y_i$.  Each oligomorphic 2-cell is a finite product of cells and 
 a finite algebraic closure of the empty set.
Their number is countable as already mentioned. 
Each cell is a finite cover of a pure set in the sense of Evans, Macpherson, Ivanov, Finite covers, 1997. These covers have been classified.

\section{Harrison-Trainor and Nies: $\Pi_r$-pseudofinite groups}
The authors of this post worked at the (now defunct)  Research Centre Coromandel  in July. They started from some notes of Dan Segal (2014), which in turn are based on \cite{Houcine.Point:13}.  The main concept in these notes is this. \begin{definition} A group $G$ is called \emph{pseudofinite} if each first-order sentence true in $G$ also holds in some  finite group.\end{definition} In the first two sections we present  a simplified account of   some material in  the Segal  notes. Our account  is somewhat more general than the notes, firstly as  it  takes into account the quantifier complexity of the sentences for   which  a group needs to be pseudofinite, and secondly because  a lot of this work can be carried out  in the   setting of an arbitrary finitely axiomatised  variety of algebraic structures.

 Throughout, let $G$ be an   algebraic structure  in a language $L$ with    finitely many function and constant symbols, and no relation symbols besides equality.  The $\Pi_0$ and the $ \Sigma_0$ sentences are the quantifier free ones, thought of as disjunctions of conjunctions of equalities and inequalities.  Note that satisfaction of  $\Pi_1$ sentences is closed under taking substructures.
\begin{definition} \bi \item[(i)]  For $r\in \NN$ we say that an $L$-structure $G$ is  \emph{$\Sigma_r$-pseudofinite} if  $G$ is a model of the $\Sigma_r$-theory of finite $L$-structures. Equivalently,  each $\Pi_r$ sentence that holds in $G$ also holds in some finite $L$-structure.  \item[(ii)] Similarly, we define \emph{$\Pi_r$-pseudofiniteness} of an  $L$-structure $G$. \ei \end{definition}

\begin{definition} \label{conv:basic} We  say that an  $L$-structure $G$ has \emph{named generators} if $G$  is $d$-generated for some $d \ge 1$, and  the language $L$ contains finitely many constant symbols $\ol c= (c_1, \ldots, c_d)$ naming such generators of $G$. \end{definition}
In this case  witnesses for outermost existential quantifiers be named by terms. So we have

\begin{fact} \label{fact:psfinite vary} Let $G$ be an $L$-structure with named generators. Let $\wt G$ be the structure with the constants naming the generators  omitted. Let $ r \ge 1$. The following are equivalent.

(i) $\wt G$ is $\Pi_{r+1}$-pseudofinite 

(ii)  $  G$ is $\Pi_{r+1}$-pseudofinite

(iii) $G$ is $\Sigma_r$-pseudofinite.
\end{fact}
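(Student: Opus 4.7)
My plan is to use the equivalent dual form of each hypothesis: (ii) says ``every $\Sigma_{r+1}$-sentence of $L$ true in $G$ has a finite $L$-model,'' (iii) says the analogous statement for $\Pi_r$-sentences, and (i) is the analogue of (ii) in $\widetilde L$ for $\widetilde G$. I will establish the cycle (ii) $\Rightarrow$ (i) $\Rightarrow$ (iii) $\Rightarrow$ (ii); the named-generator hypothesis is only needed in the last implication. For (ii) $\Rightarrow$ (i), any $\Sigma_{r+1}$-sentence $\sigma$ of $\widetilde L$ is already a sentence of $L$, and $\widetilde G \models \sigma$ iff $G \models \sigma$ since $\bar c$ does not appear in $\sigma$; a finite $L$-model of $\sigma$ provided by (ii) then has a finite $\widetilde L$-reduct.

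For (i) $\Rightarrow$ (iii), given a $\Pi_r$-sentence $\theta(\bar c)$ of $L$ true in $G$, I will substitute fresh variables $\bar y$ for $\bar c$ to form the $\Pi_r$-formula $\widetilde\theta(\bar y)$ of $\widetilde L$; then $\exists \bar y\, \widetilde\theta(\bar y)$ is a $\Sigma_{r+1}$-sentence of $\widetilde L$ witnessed in $\widetilde G$ by $\bar c$. Applying (i) yields a finite $\widetilde L$-model $\widetilde H$ with some witness $\bar a$, and expanding $\widetilde H$ to an $L$-structure by setting $c_i^H = a_i$ gives a finite $L$-model of $\theta(\bar c)$.

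The substantive step is (iii) $\Rightarrow$ (ii), where the named generators are essential. Given a $\Sigma_{r+1}$-sentence $\sigma = \exists \bar y\, \theta(\bar y,\bar c)$ of $L$ with $\theta \in \Pi_r$ and $G \models \sigma$, choose a witness $\bar a$ in $G$. Because $\bar c$ generates $G$, there is a tuple $\bar t$ of $L$-terms with $\bar t(\bar c)^G = \bar a$, so the $\Pi_r$-sentence $\theta(\bar t(\bar c),\bar c)$ of $L$ holds in $G$. Applying (iii) produces a finite $L$-model $H$ of this sentence, in which the tuple of closed terms $\bar t(\bar c)$ witnesses $\sigma$, so $H \models \sigma$. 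I do not anticipate a genuine obstacle: the crux is precisely the term-for-witness substitution, which is the mechanism by which named generators trade the outer $\exists$ of a $\Sigma_{r+1}$-sentence for a closed term and thereby reduce its complexity to $\Pi_r$.
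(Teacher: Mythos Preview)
Your proof is correct and follows essentially the same route as the paper's: the same cycle (ii) $\Rightarrow$ (i) $\Rightarrow$ (iii) $\Rightarrow$ (ii), with the named-generator hypothesis used precisely in (iii) $\Rightarrow$ (ii) to replace existential witnesses by closed terms and drop the complexity from $\Sigma_{r+1}$ to $\Pi_r$. The paper merely labels (ii) $\Rightarrow$ (i) and (i) $\Rightarrow$ (iii) as straightforward, while you spell them out, but the substance is identical.
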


\begin{proof} (ii)$\to$(i) and (i)$\to$(iii) are straightforward, and don't rely on the fact that the $c_i$ name generators. 

For (iii)$\to$(ii), suppose a sentence $\theta\equiv \ex \ol x \, \xi(\ol x)$ holds in $G$, where $\ol x $ denotes $ (x_1, \ldots, x_m)$ and $\xi(\ol x)$ is $\Pi_r$. Since the $c_i$ name generators, we can pick $L$-terms $t_j$ without free variables as witnesses. The sentence $\xi(t_1, \ldots, t_m)$ is $\Pi_r$ and holds in $G$. So it holds in some finite $L$-structure. Hence $\theta$ holds in that structure. \end{proof}  

The $\Sigma_1$-theory of finite groups equals the $\Sigma_1$-theory of the trivial group, which is decidable. Every  group satisfies the $\Sigma_1$-theory of the trivial group and hence is $\Sigma_1$-pseudofinite. So the notion $\Sigma_1$-pseudofiniteness really only makes sense for groups with   named generators. 

In contrast, the $\Pi_1$-theory of finite groups is hereditarily undecidable by a result of Slobodskoi~\cite{Slobodskoi:81}. However, it has infinitely many  decidable completions, e.g. the theory of $(\ZZ^n, +)$, for each $n \ge 1$. To see this, let $H$ be the ultraproduct of all cyclic groups of prime order. $H$ is abelian, torsion free, not f.g.\  and satisfies the theory of finite groups. Any subgroup of $H$  satisfies the $\Pi_1$-theory of $H$. To see that  $H$ has infinite rank,  let $R_i$, $i \in \NN$ be disjoint infinite sets of primes, let $f_i(p)= 1$ for $p \in R_i$ and $0$ otherwise. Then the $[f_i]$ are linearly independent over $\ZZ$.

\subsection{$\Sigma_1$-embedding of $G$ into an ultraproduct of witness structures} \ 

\n 
The following construction  is based on Houcine and Point~\cite{Houcine.Point:13}. We fix some variety $\+ V$ of $L$-structures, such as groups. Suppose $G$ is as in Definition~\ref{conv:basic}, and $G$ is $\Sigma_1$-pseudofinite. Let $\seq{ \phi_i'}\sN i$ be a  list of  the $\Pi_1$-sentences that hold in $G$, with $\phi_0'$ being the conjunction of the finitely many axioms for $\+ V$. Let $\phi_n=\bigwedge_{i\le n} \phi'_i$. By hypothesis on $G$ there is a finite $L$-structure $E_n$ (called a witness structure) such that $E_n \models \phi_n$. Since we are only considering $\Pi_1$-sentences, we may assume that each $E_n$ is generated by $\ol c^{E_n}$. 

Let  $\+ U$ be a free ultrafilter on $\NN$. Let $E = \prod_n E_n /\+ U$ be the corresponding ultraproduct.   
Define an embedding  $\beta \colon G \to E$ by setting for each $L$-term $s$

\bc $\beta (s(\ol c^G)) = s(\ol c^E)$. \ec
Note that $\beta$ is well-defined and $1-1$: for each terms $s,t$, if $G \models s(\ol c)= t(\ol c)$ then the sentence $s(\ol c)= t(\ol c)$ equals    $\phi'_i$  for some $i$, and so  $E_n \models s(\ol c)= t(\ol c)$ for each $n \ge i$. One argues similarly for the case  $G \models s(\ol c)\neq t(\ol c)$.  So $\beta$ is a monomorphism of $L$-structures.

\begin{fact} \label{fact:elembed} Suppose an  $L$-structure $G$ with named generators as in Definition~\ref{conv:basic}  is  $\Sigma_1$-pseudofinite. The map $\beta \colon G \to E$ preserves satisfaction of $\Sigma_1$-formulas in both directions. Thus, 
for each $p_1, \ldots, p_k \in G$ and each quantifier free $L$-formula $\theta(x_1, \ldots, x_k, \tilde y)$,  
\bc $G \models \ex \tilde y \ \theta(p_1, \ldots, p_k, \tilde y) \LR E \models \ex \tilde y \  \theta(\beta(p_1), \ldots, \beta(p_k), \tilde y)$. \ec
\end{fact}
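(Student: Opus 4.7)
The plan is to leverage the hypothesis that $G$ has named generators $\ol c = (c_1, \ldots, c_d)$: every element of $G$ is of the form $t(\ol c^G)$ for some $L$-term $t$, and $\beta$ has been defined precisely so that $\beta(t(\ol c^G)) = t(\ol c^E)$. This lets us convert each $\Sigma_1$-statement with parameters from $G$ into a parameter-free $\Sigma_1$-sentence, at which point the previous reasoning about $\phi_i'$ and \L o\'s's theorem takes over.

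First, pick $L$-terms $t_1, \ldots, t_k$ with $p_i = t_i(\ol c^G)$, and let $\psi$ be the $\Sigma_1$-sentence $\exists \tilde y \, \theta(t_1(\ol c), \ldots, t_k(\ol c), \tilde y)$. By the definition of $\beta$, the right-hand side of the asserted equivalence is just $E \models \psi$; the left-hand side is $G \models \psi$. So it suffices to show $G \models \psi \iff E \models \psi$.

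For the forward direction, assume $G \models \psi$ and choose witnesses $q_j \in G$; since $\ol c$ generates $G$, write $q_j = s_j(\ol c^G)$ for $L$-terms $s_j$. Then the quantifier-free sentence $\sigma := \theta(t_1(\ol c), \ldots, t_k(\ol c), s_1(\ol c), \ldots, s_m(\ol c))$ holds in $G$. Since $\sigma$ is quantifier-free, it is in particular $\Pi_1$, so $\sigma$ equals some $\phi'_i$ in the enumeration; thus $E_n \models \sigma$ for all $n \ge i$, and \L o\'s's theorem gives $E \models \sigma$, hence $E \models \psi$. For the reverse direction, argue contrapositively: if $G \not\models \psi$, then $G$ satisfies the $\Pi_1$-sentence $\neg\psi \equiv \forall \tilde y \, \neg\theta(t_1(\ol c), \ldots, t_k(\ol c), \tilde y)$, which again appears as some $\phi'_i$, so $E_n \models \neg\psi$ for all large $n$, and by \L o\'s, $E \models \neg\psi$.

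There is no real obstacle here; the argument is essentially bookkeeping. The only point that needs a bit of care is that the forward direction relies on the named-generators hypothesis in an essential way (to realize the existential witnesses as values of closed terms), whereas the reverse direction only needs that the parameters $p_i$ are values of closed terms (so one could substitute the $p_i$ with constants). The symmetry between the two directions is bought at the cost of using $\Pi_1$-pseudofiniteness (trivially supplied by $\Sigma_1$-pseudofiniteness plus generators, by Fact~\ref{fact:psfinite vary}) to move from $E_n$ back to $G$.
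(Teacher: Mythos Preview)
Your proof is correct and matches the paper's argument: reduce to a parameter-free $\Sigma_1$-sentence using the named generators, then for the backward direction observe that $\neg\psi$ is $\Pi_1$, hence among the $\phi_i'$, so $E_n\models\neg\psi$ for cofinitely many $n$ and \L o\'s gives $E\models\neg\psi$. One small simplification: the forward direction does not need the named-generators trick for the witnesses at all---once $\beta$ is known to be an $L$-embedding (established just before the Fact), preservation of existential formulas from $G$ to $E$ is automatic; the paper accordingly treats only the backward implication as nontrivial.
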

\begin{proof}  We may incorporate the parameters into $\theta$, so we may assume that there are none. Let $\phi \equiv \ex \tilde y \ \theta(  \tilde y) $.  For the nontrivial implication suppose that $E \models \phi$. If $G \models \lnot \phi$, then $E_n \models \lnot \phi$ for almost all $n$, contradiction. So $G \models \phi$.
\end{proof}

\subsection{The number of factors needed for being in a verbal subgroup}
In this section we only consider the case that $G$ is a group. We will explain the result of Segal that if $G$ is $\Pi_2$-pseudofinite  then the verbal subgroups $G^{(n)}$ and $\la G^q\ra$, $n, q \ge 1$,  are  definable in $G$ by a positive  $\Sigma_1$ formula.

Below  $G$ satisfies Definition \ref{conv:basic} where $d$ denotes the number of   generators. The language $L$ consists of symbols for the group operations, the neutral element, and constants $c_1, \ldots, c_d$ naming the generators.

\begin{definition} Let $d,r \ge 1$. Let  $w$ be a term with free variables $x_1, \ldots, x_k$ in the language of group theory, identified with an element of the free group $F_k$. We say that $w$   is \emph{$r$-bounded for $d$} if for each finite $d$-generated group $H$, we have 

\bc $w(H)= w^{*r}(H)$. \ec
Here for any group $H$, by $w(H)$ one denotes the usual verbal subgroup which is generated by the values of $w$, and $w^{*r}(H)$ is the set of products of up to~$r $ many  values of $w$ or their inverses. \end{definition}

Nikolov and Segal \cite[Thm 1.7]{Nikolov.Segal:07}, or \cite[Thm.\ 1.2]{Nikolov.Segal:12},  show that this holds for the terms  $x^q$, where $q \ge 1$, and iterated commutators such as $[x,y]$ or $[[x,y],[z,w]]$. For such terms, the result below (for pseudofinite groups)  was established in \cite[Prop.\ 3.8]{Houcine.Point:13}, but an additional hypothesis was needed for the case of terms  $x^q$.

\begin{prop} \label{prop: small products} Suppose a  $d$-generated group $G$ with named generators as in  Definition \ref{conv:basic}  is $\Sigma_1$-pseudofinite w.r.t.\ the language $L$. Suppose further that $w$ is $r$-bounded for $d$. Then $w(G)= w^{*r}(G)$. 

By Fact~\ref{fact:psfinite vary} the hypothesis holds  if $G$ is $\Pi_2$-pseudofinite w.r.t.\  the language of group theory. In this case $w(G)$ is $\Sigma_1$-definable without parameters in that language. \end{prop}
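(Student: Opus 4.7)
The plan is to use the $\Sigma_1$-elementary embedding $\beta \colon G \hookrightarrow E = \prod_n E_n / \+ U$ furnished by Fact~\ref{fact:elembed}, where the $E_n$ are finite $L$-structures generated by $\ol c^{E_n}$. Given $g \in w(G)$, I would first write it as an explicit product $g = w(\ol a_1)^{\epsilon_1} \cdots w(\ol a_m)^{\epsilon_m}$ for some $m$ and signs $\epsilon_i$. Using that $G$ is generated by $\ol c$, I may take each entry of each $\ol a_i$ to be a closed $L$-term, so this equation is $\Sigma_0$ and transfers to each coordinate of $E$: setting $g_n := w(\ol a_{1,n})^{\epsilon_1} \cdots w(\ol a_{m,n})^{\epsilon_m}$, one has $g_n \in w(E_n)$.

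Since each $E_n$ is a finite $d$-generated group and $w$ is $r$-bounded for $d$, we have $g_n \in w^{*r}(E_n)$, so there exist tuples $\ol b_{1,n}, \ldots, \ol b_{r,n}$ together with a sign pattern $\ol \delta_n \in \{-1,0,+1\}^r$ such that $g_n = \prod_{j=1}^r w(\ol b_{j,n})^{\delta_{j,n}}$, where the value $0$ stands for an omitted factor. The key step is a pigeonhole on sign patterns: since $\{-1,0,+1\}^r$ is finite, some fixed $\ol \delta^*$ satisfies $X := \{n : \ol \delta_n = \ol \delta^*\} \in \+ U$. Setting $\ol b_j := [\ol b_{j,n}]_{\+ U}$ along $X$ gives, in $E$, the equation $\beta(g) = \prod_{j=1}^r w(\ol b_j)^{\delta^*_j}$. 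Hence the $\Sigma_1$-formula $\psi_{\ol \delta^*}(x) \equiv \ex \ol y_1 \ldots \ol y_r \, (x = \prod_j w(\ol y_j)^{\delta^*_j})$ holds at $\beta(g)$ in $E$, and by Fact~\ref{fact:elembed} it holds at $g$ in $G$. So $g \in w^{*r}(G)$, completing the equality $w(G) = w^{*r}(G)$.

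For the definability claim, observe that $w^{*r}(G) = \bigcup_{\ol \delta \in \{-1,0,+1\}^r} \psi_{\ol \delta}(G)$ is a finite union of sets each defined by a parameter-free $\Sigma_1$-formula in the pure group language; this amalgamates into a single $\Sigma_1$-formula by pushing the disjunction inside the existentials. The inverses $w(\ol y)^{-1}$ can moreover be eliminated positively by introducing auxiliary variables $z_j$ and positive equations $z_j \cdot w(\ol y_j) = 1$, giving a positive $\Sigma_1$-definition of $w(G) = w^{*r}(G)$ without parameters.

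The main obstacle I anticipate is the amalgamation step: the witnesses $\ol b_{j,n}$ and their sign patterns in each $E_n$ depend on $n$, so no single $\Sigma_1$-witness can be transferred along $\beta$ directly. The finiteness of the set of sign patterns together with the ultrafilter $\+ U$ is what allows us to freeze to one pattern $\ol \delta^*$ and form a uniform witness in $E$, which is the technical heart of the argument.
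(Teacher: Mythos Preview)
Your proposal is correct and follows essentially the same approach as the paper: both arguments push the original product expression for $g$ into the ultraproduct $E$, use $r$-boundedness in each finite coordinate $E_n$, apply a pigeonhole on the finitely many sign patterns via the ultrafilter to get a single pattern working on a set in $\+ U$, and then pull the resulting $\Sigma_1$-statement back to $G$ via Fact~\ref{fact:elembed}. The only cosmetic differences are that you work explicitly with coordinate representatives and use sign alphabet $\{-1,0,+1\}$, whereas the paper phrases everything as sentences and uses $\{-1,+1\}$ (harmless since $w(1,\ldots,1)=1$), and you spell out the $\Sigma_1$-definability claim that the paper merely asserts.
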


\begin{proof} Write $g_i= c_i^G$ and $\ol g=(g_1, \ldots, g_k)$. Suppose $t(\ol g)\in w(G)$ for some term $t$. That is, for some $m$, there are $k$-tuples $\ol z_j $ of elements of $G$ and $\epsilon_j = -1, 1$,  for $1 \le j \le m$,  such that \bc $t(\ol g) = \prod_{1 \le j\le m} w(\ol z_j)^{\epsilon_j} $. \ec
Then $E $ satisfies the $\Sigma_1$-sentence $ \ex \ol z_1 \ldots \ex \ol z_m \, [ t(\ol c) = \prod_{1 \le j\le m} w(\ol z_j)^{\epsilon_j}] $, and hence the ultrafilter $\+ U  $ contains the set $S$ of those $n$ such that $E_n$ satisfies this sentence. By hypothesis on $w$  and since $E_n$ is $d$-generated by the $c_i^{E_n}$, for each $n\in S$ there is a choice of $\eta_\ell = -1, 1$, for $1 \le \ell \le r$,  such that 
\bc $E_n \models  \ex \ol y_1 \ldots \ex \ol y_m \, [ t(\ol c) = \prod_{1 \le j\le r} w(\ol y_j)^{\eta_j} ]$. \ec
Since $\+ U$ is an ultrafilter, there must be one choice $(\eta_\ell)_{1 \le \ell \le r}$ so that the $n$ where this choice applies form a set in $\+ U$. Hence  for this choice we have
\bc $E \models  \ex \ol y_1 \ldots \ex \ol y_m \, [ t(\ol c) = \prod_{1 \le j\le r} w(\ol y_j)^{\eta_j} ]$. \ec
By Fact~\ref{fact:elembed} this implies that $t(\ol g) \in w^{*r}(G)$ as required.
\end{proof}
 As Houcine and Point point out \cite[Lemma 2.11]{Houcine.Point:13}, parameter definable quotients and subgroups of pseudofinite groups are again pseudofinite. Also, finitely generated pseudofinite groups that are of fixed exponent, or soluble, are finite \cite[Prop.\ 3.9]{Houcine.Point:13}. 
So, $G$ is an extension of a pseudofinite group,  the verbal subgroup given by a term as above, by a finite group satisfying the law given by the term.
\subsection{Effectiveness considerations}

We show that a variant of the basic construction above leading to Fact~\ref{fact:elembed} is effective using   the structure $G$ as an oracle.

\n \emph{Effective ultraproducts.} Given a  sequence $\seq{E_n}$ of uniformly computable structures for the same finite signature, and a non-principal ultrafilter $\+ U$ for the Boolean algebra of recursive sets, one can form the structure 
$E = \prod_{rec} E_n/{\+ U}$. It consists of $\sim_\+ U$-equivalence classes of recursive functions $f$ such that $f(n) \in E_n$ for each $n$. Here $f \sim_\+ U g$ denotes that functions  $f,g$ agree on a set in $\+ U$. We interpret the  relation and functions symbols on $E$ as usual. 

A   version of Los' Theorem restricted to existential formulas holds. 
\begin{fact}
For each   formula $\theta(x_1, \ldots, x_m ) \equiv \ex \wt y \, \xi(x_1, \ldots, x_m , \wt y)$ with $\xi$ quantifier free, and each $[f_1], \ldots, [f_m]\in E$, 

\bc $E \models \theta([f_1], \ldots [f_m]) \LR R=\{ n \colon E_n \models \theta(f_1(n), \ldots f_m(n))\} \in \+ U$. \ec \end{fact}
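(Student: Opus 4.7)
The plan is to follow the template of Łoś's theorem, proving the equivalence by induction on the build-up of $\theta$. Since $\theta$ has the restricted shape $\ex \wt y \, \xi(\ol x, \wt y)$ with $\xi$ quantifier free, there are really two stages: a quantifier-free base clause and a single existential layer on top. Throughout the argument one must keep track of the fact that the ultrafilter $\+ U$ lives only on the Boolean algebra of recursive subsets of $\NN$, so each set one wishes to place into or extract from $\+ U$ must first be shown to be recursive.

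For the quantifier-free stage I would induct on the construction of $\xi$. At the atomic level $t_1(\ol x) = t_2(\ol x)$, the interpretation of function symbols in an effective ultraproduct is defined so that $t_i^E([f_1], \ldots, [f_m])$ is represented by the recursive function $n \mapsto t_i^{E_n}(f_1(n), \ldots, f_m(n))$. Hence the set $S = \{n : E_n \models t_1(\ol f(n)) = t_2(\ol f(n))\}$ is recursive, and the definition of $\sim_{\+ U}$ gives $E \models t_1([\ol f]) = t_2([\ol f]) \LR S \in \+ U$ directly. Boolean combinations reduce to closure of the family of recursive subsets of $\NN$ under intersection and complementation combined with the ultrafilter property: the set associated to $\xi_1 \wedge \xi_2$ is $S_1 \cap S_2$ and the set associated to $\neg \xi_1$ is $\NN \setminus S_1$, both again recursive.

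For the existential layer the forward direction is immediate. If $E \models \ex \wt y \, \xi([\ol f], \wt y)$, pick representatives $[g_1], \ldots, [g_k] \in E$, where each $g_j$ is recursive with $g_j(n) \in E_n$. The quantifier-free case yields $R' = \{n : E_n \models \xi(\ol f(n), \ol g(n))\} \in \+ U$, and $R' \sub R$. Provided $R$ is itself recursive, which is the intended situation when each $E_n$ is finite so that satisfaction of $\ex \wt y \, \xi(\ol f(n), \wt y)$ is uniformly decidable in $n$, the filter property then yields $R \in \+ U$.

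The backward direction is the genuinely effective step, and is the main obstacle because the usual proof of Łoś picks witnesses pointwise by unrestricted choice. Given $R \in \+ U$, I must manufacture \emph{computable} Skolem functions $g_1, \ldots, g_k$. For each $n \in R$ the set $W_n = \{\tilde a \in E_n^k : E_n \models \xi(\ol f(n), \tilde a)\}$ is non-empty, and because the $E_n$ are uniformly computable and $\xi$ is quantifier free, the predicate $\tilde a \in W_n$ is decidable uniformly in $n$ (and when $E_n$ is finite the search space $E_n^k$ can itself be effectively enumerated). For $n \in R$ let $\tilde a_n$ be the first element of $E_n^k$, in a fixed effective enumeration, that lies in $W_n$; for $n \in \NN \setminus R$ set $\tilde a_n$ to a fixed default tuple. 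Decidability of $R$ patches the two cases together into a total computable map, and projecting onto coordinates gives recursive $g_1, \ldots, g_k$ with $\{n : E_n \models \xi(\ol f(n), \ol g(n))\} \supseteq R$, hence in $\+ U$. The quantifier-free case applied to $[\ol g]$ now yields $E \models \xi([\ol f], [\ol g])$, so $E \models \ex \wt y \, \xi([\ol f], \wt y)$, completing the argument.
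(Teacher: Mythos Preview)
Your proof is correct and follows essentially the same approach as the paper's: both argue the forward direction via witnesses $[g_j]$ giving a computable subset $R'\sub R$ in $\+ U$, and the backward direction by an effective search producing computable Skolem functions with default values off $R$. You are in fact more careful than the paper, which simply asserts ``the set $R$ is computable'' and absorbs the quantifier-free base case into ``by definition of the ultraproduct,'' whereas you spell out the induction on $\xi$ and correctly flag that computability of $R$ depends on the $E_n$ being finite (as they are in the intended application).
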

\begin{proof} 
Note that  the set $R$ is computable. For the implication from left to right, suppose $[h_1], \ldots, [h_k]$  are witnesses for  $E \models \theta([f_1], \ldots [f_m])$,  for  recursive functions $h_1, \ldots, h_k$.This shows that  $R \in \+ U$ by definition  of the ultraproduct.

For the implication from right to left, note that when $n$ is in the computable set $R$ one  can search for the witnesses $w_n$ in $E_n$. So one can define computable witness functions $h_1, \ldots, h_k$ for $E \models \theta([f_1], \ldots [f_m])$,  assigning a vacuous value in $E_n$ in case $n \not \in R$. \end{proof}

 To obtain an effective version of  Fact~\ref{fact:elembed},   let $ \psi_i$ be an effective list of all the $\Pi_1$ sentences in $L$, where $\psi_0$ is the conjunction of the laws of $\+ V$. We modify the construction  of the $E_n$ as follows.
 Given $n$, look for the least  stage $s$   and a finite $L$-structure, called  $E_n$, such that  $E_n$ satisfies each $\psi_i$, $i \le n$, that has not been     shown to fail in $G$ by stage $s$, in the sense that a counterexample has been found among the first $s$ elements of $G$. 
 
 Now define the restricted ultraproduct $E$ as above. The argument for the fact can be carried out  as before:  Suppose that $E\models \phi$ for an existential sentence $\phi$. If $G \models \lnot \phi$, then $E_n \models \lnot \phi$ for almost all $n$. This contradicts the weak version of Los theorem above.
 
 The argument of Prop~\ref{prop: small products} also works with this restricted ultraproduct. Note that the set $S$ in the  proof of Prop~\ref{prop: small products}  is computable since the $E_n$ are finite and given by strong indices. The set of $n$ for which a particular choice of $\eta_\ell$ works is also computable. 
 
 The upshot: if $G$ is computable, then we have a canonical ultraproduct version $E$ of $G$, with a $\Sigma_1$ elementary embedding, and this version $E$ is in a sense effective as well, except for the ultrafilter, which is necessarily high in a sense specified and proved in a 2020 preprint by Lempp, Miller, Nies and Soskova available at \url{www.cs.auckland.ac.nz/research/groups/CDMTCS/researchreports/download.php?selected-id=769}.

%
 \part{Computability theory and randomness}
   \newcommand{\cost}{\mathbf{c}}
\section{Greenberg, Nies and Turetsky: Characterising SJT reducibility}
\subsection{The basic concepts}
\subsubsection{SJT- reducibility and its equivalents}
 SJT-reducibility was introduced in 
\cite[Exercise 8.4.37]{Nies:book}.
\begin{definition}[Main: SJT-reducibility]  For an oracle $B$, a  $B$--c.e.\ trace is a u.c.e.\ in $B$ sequence $\seq{T_n}\sN n$ of finite sets. For a function $h$, such a trace   is $h$-bounded if $|T_n| \le h(n)$ for each $n$. A set $A$ is jump-traceable if there is a computably bounded $\ES$--c.e. trace $\seq{T_n}\sN n$ such that $J^A(n)$ is in $T_n$ if it is defined.

  For sets $A,B$, we write $A \le_{SJR} B $ if for each order function $h$, there is a $B$--c.e., $h$-bounded trace for $J^A$. 
\end{definition}
This is transitive by argument similar to \cite[Theorem 3.3]{Ng:10}.

(Question: Is ``strongly superlow" reducibility equivalent to $\le_{SJR}$? This was also mentioned in  \cite[Exercise 8.4.37]{Nies:book}, there shown to be at least as strong.)

\begin{proposition} For each $K$-trivial  set $B$,  there is a c.e.\ $K$-trivial set  $A$ such that $A \not \le_{SJR} B$. \end{proposition}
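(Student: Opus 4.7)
Proof proposal. The plan is a cost-function construction. Build $A$ as a c.e.\ set obeying the cost function $\cost_\Omega$ — so $A$ is automatically $K$-trivial by the main cost function theorem — while simultaneously diagonalizing against all $B$-c.e.\ candidate traces of $J^A$ at a fixed computable order function $h$, say $h(n) = n$. Since $A \not\le_{SJR} B$ only requires exhibiting one order function $h$ for which no $B$-c.e.\ $h$-bounded trace of $J^A$ exists, this suffices.

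Enumerate uniformly the $B$-c.e.\ candidate traces $\seq{V_e}\sN e$ with stage-$s$ approximations $V_e(n)[s]$ from $B_s$. To the requirement $R_e$ asserting ``$V_e$ is not an $h$-bounded trace for $J^A$,'' attach a fresh witness $n_e$ and fresh tokens $x_e^0, \ldots, x_e^{h(n_e)}$, chosen (by standard cost-function bookkeeping) so that each possible enumeration $x_e^k$ costs at most $2^{-e-k-2}$ under $\cost_\Omega$. Using $s$-$m$-$n$, pick $n_e$ so that $J^A(n_e) = |\{i \le h(n_e) : x_e^i \in A\}|$, so enumerating $x_e^k$ raises $J^A(n_e)$ by one. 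Construction: at stage $s$, for each $e \le s$, let $k = J^A(n_e)[s]$; if $k \le h(n_e)$ and $k \in V_e(n_e)[s]$, enumerate $x_e^k$ into $A$. The total cost is bounded by $\sum_{e,k} 2^{-e-k-2} \le 1$, so $A$ obeys $\cost_\Omega$ and is $K$-trivial.

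To verify $R_e$, assume $V_e$ is $h$-bounded. If only $k \le h(n_e)$ tokens ever enter $A$ for $R_e$, then $J^A(n_e)$ stabilizes at $k$ and by the construction rule $k \notin V_e(n_e)[s]$ for all sufficiently large $s$, hence $k \notin V_e(n_e)$ and the trace misses $J^A(n_e)$. Otherwise all $h(n_e)+1$ tokens enumerate: each of $0, 1, \ldots, h(n_e)$ was triggered by $k \in V_e(n_e)[s_k]$ at some stage, and combined with the correctness demand $J^A(n_e) = h(n_e)+1 \in V_e(n_e)$ this would force $|V_e(n_e)| \ge h(n_e)+2$, contradicting $h$-boundedness — \emph{provided} the triggering appearances persist in the limit.

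The central obstacle is exactly this persistence claim: because $V_e$ is $B$-c.e.\ (not c.e.), an element can enter $V_e(n_e)[s]$ based on $B_s$ and later be evicted as $B$ changes beneath the use of the operator. This is where $K$-triviality of $B$, as opposed to mere lowness, is decisive. The plan is to exploit $B \le_T \Omega$ (which holds for every $K$-trivial $B$) to single out a computable cofinal sequence of ``$\Omega$-settled'' stages, along which the relevant portion of $B$ — and hence of $V_e(n_e)[s]$ — no longer changes; restricting the construction to act only at such stages makes each triggering appearance permanent in $V_e(n_e)$. The main technical step is reconciling this stage-restriction with the $\cost_\Omega$ budget, so that tokens remain within their allotted cost when enumerated only at the restricted stages; this is the delicate meshing point at which $K$-triviality (through obedience of $B$ to $\cost_\Omega$) is essential rather than incidental.
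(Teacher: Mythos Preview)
The paper's proof is much shorter and takes a different route. It cites the Cholak--Downey--Greenberg result --- for a fixed computable bound $h$ there is a functional $\Psi$ and a $K$-trivial $A$ such that $\Psi^A$ has no c.e.\ $h$-bounded trace --- and observes that the construction relativizes uniformly: for every oracle $B$ one obtains an $A$ that is $K$-trivial relative to $B$ with $J^{A\oplus B}$ having no $B$-c.e.\ $g$-bounded trace, for the \emph{same} computable $g$. Since a $K$-trivial $B$ is low for $K$, $K$-trivial-in-$B$ implies $K$-trivial; passing to a c.e.\ $K$-trivial Turing above $A$ finishes.

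Your direct construction has a genuine gap at exactly the step you flag. The phrase ``a computable cofinal sequence of $\Omega$-settled stages along which the relevant portion of $B$ no longer changes'' cannot be made literal: computably recognizing stages at which $\Omega_s$ has stabilized on a growing initial segment would let you compute $\Omega$. The natural alternative --- bounding the number of false triggers via $B$'s obedience to $\cost_\Omega$ --- runs into the difficulty that the $B$-use of the enumeration of $k$ into $V_e(n_e)[s]$ is chosen by the opponent; if that use is close to $s$, a subsequent $B$-change below it carries arbitrarily small $\cost_\Omega$-cost, so the cost bound on $B$ does not translate into a bound on false triggers for $A$. Filling this gap would essentially require reproducing the Cholak--Downey--Greenberg box-promotion machinery in the relativized setting, which is precisely what the paper's cite-and-relativize strategy avoids. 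If you want to salvage the construction, the clean fix is to run it with $B$ as an actual oracle (so $V_e(n_e)$ is correct at every stage and persistence is free), obtain a $B$-c.e.\ $A$ that is $K$-trivial relative to $B$, then invoke low-for-$K$ and pass to a c.e.\ set above --- but that is the paper's proof.
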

\begin{proof} For some fixed computable function $h$ there is a functional  $\Psi$ and $K$-trivial set $A$ such that $\Psi^A$ has no c.e.\ trace bounded by $h$, by a result of \cite{Cholak.Downey.ea:08}. (Also see \cite[8.5.1]{Nies:book} where $h(n)= 0.5 \log \log n$ works.) Encoding $\Psi$ into $J$, we get a fixed  computable function $g$ so that the statement holds for $J$ and $g$ instead of $\Psi$ and $h$.Relativizing to $B$ we can retain the same $g$, so    for each $B$ there is a $K$-trivial in $B$ set $A$ such that $J^{A \oplus B}$ does not have a $B$-c.e.\ trace bounded by $g$. In particular, $A \not \le_{SJR} B$.

If $B$ is $K$-trivial, it is low for $K$, and hence $A$ is also $K$-trivial. Finally, there is $K$-trivial c.e.\ set $\hat A \ge_T A$, so we can make $A$ c.e.
\end{proof}
\begin{definition}
Let $c$ be  a cost function. For sets $A,B$, we write $A \models_B c$ if there is a $B$-computable enumeration of $\seq{A_s}$ satisfying $c$.
\end{definition}

\begin{definition}[Benign cost functions] A cost function~$\cost$ is \emph{benign} \cite{Greenberg.Nies:11} if from a rational  $\epsilon>0$, we can compute a bound on the length of any sequence $n_1 < s_1 \le n_2 < s_2 \le \cdots \le n_{\ell} < s_{\ell}$ such that $\cost(n_{i},s_{i})\ge \epsilon$ for all $i\le \ell$. For example, $\cost_{\Omega}$ is benign, with the bound being $1/\epsilon$. \end{definition}

Conjecture that strengthens proposition above: For each benign $c$, for each    $B\models c$,  there is a c.e.\    $A \models c$ such that $A \not \le_{SJR} B$.

Conjecture for each noncomputable c.e.\ $E$ there are c.e. $\le_{SJR}$-incomparable $A,B \le_T E$. 

Fact $\le_{SJR}$ is $\SI 3$ on the $K$-trivials. Density might be easy on the $K$-trivials using this. 

\subsubsection{Two  relevant randomness notions}
\begin{definition}[Demuth randomness] \label{ref:  Demuth}  A  \emph{Demuth test}  is a seqence  $\seq {G_m}\sN m$ of open subsets of $\cantor$  such that  $\leb G_m \le \tp{-m}$ and there is  an $\omega$-c.a.\ function $p$ such that $G_m = \Opcl{W_{p(m)}}$. Since the function $p$ is $\omega$-c.a.,  there are a computable approximation  function $p(m,s)$ and a computable bound $b$  such that $\lim_s p(m,s) = p(m)$ and the number of changes is bounded by $b(m)$. One writes $G_m[t]$ for $ \Opcl{W_{p(m,t),t}}$, the approximation of the $m$-th component at stage $t$. One may assume that $\leb G_m[t] \le \tp{-m}$ for each $t$ by cutting  off the enumeration of $G_m$ when it attempts to exceed that measure. One says that $Z$  is \emph{Demuth random} if for each such test, one has  $Z \not \in G_m$ for almost every $m$. 

\end{definition}

\begin{definition}[Weak Demuth randomness]  \label{ref: nested Demuth}
A \emph{nested Demuth test} is a Demuth test  $\seq {G_m}\sN m$  such that  $G_m \supseteq G_{m+1}$ for each   $m$. One says that $Z$ is \emph{weakly Demuth random} if $Z \not \in \bigcap_m G_m$ for each nested Demuth test $\seq {G_m}\sN m$.
Replacing $G_m[t]$ by $\bigcap_{i\le m} G_i[t]$ (and noting that the number of changes remains computably bounded),  one may assume that the approximations are nested at each stage $t$, i.e., $G_m [t]\supseteq G_{m+1}[t]$ for each $m$. 
 \end{definition}

\subsection{Equvalent characterizations of $\le_{SJR}$}
 \begin{theorem} The following are equivalent for $K$-trivial  c.e.\ sets $A,B$. 

\begin{itemize}
\item[(a)] $A \le_{SJR} B$
\item[(b)] $A \models_B \cost$ for every benign cost function $\cost$ 
\item[(c)]   $A \le_T B \oplus Y$ for each ML-random set $Y$   that is  not  weakly Demuth random
\item[(d)]  $A \le_T B \oplus Y$ for each ML-random set $Y \in \+ C$, where $\mathcal C$ is   the class of the  $\omega$-c.a.,    superlow, or   superhigh sets. \end{itemize}
\end{theorem}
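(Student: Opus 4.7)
The plan is to prove the equivalences by establishing the cycle $(a)\Rightarrow(b)\Rightarrow(c)\Rightarrow(d)\Rightarrow(a)$, viewing this theorem as the relativization to the $K$-trivial base $B$ of the known characterizations of strongly jump traceable c.e.\ sets due to Greenberg, Hirschfeldt, Nies and Turetsky in the unrelativized case. Throughout, the $K$-triviality of $B$ is used to ensure that costs, decanter bookkeeping and the like run relative to $B$ essentially as in the base case; in particular $B$ obeys $\cost_\Omega$, is low for $K$, and every benign cost function remains a computable cost function.

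For $(a)\Rightarrow(b)$, given a benign cost function $\cost$ with benignness witness $g(\varepsilon)$, the standard recipe produces an order function $h$ so slow-growing that any $h$-bounded $B$-c.e.\ trace for $J^A$ converts into a $B$-computable enumeration of $A$ whose total $\cost$-cost is below any prescribed $\varepsilon$. For $(b)\Rightarrow(c)$, fix an ML-random $Y$ that is not weakly Demuth random, witnessed by a nested Demuth test $\seq{G_m}\sN{m}$ with $Y\in\bigcap_m G_m$ and approximations $G_m[t]$ satisfying a computable change bound; this produces a computable benign cost function $\cost$ whose mass at $(n,s)$ measures the residual movement of the test below some level $m(n)$. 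A $B$-computable enumeration of $A$ satisfying $\cost$ then converts into a $B\oplus Y$-reduction computing $A$: use $Y$ to detect, for each $m$, a stage beyond which the component $G_m[t]$ containing $Y$ has settled, bounding the remaining cost the enumeration can incur and hence pinning down $A\uhr{n}$. For $(c)\Rightarrow(d)$ it suffices to observe that each of the three classes consists of ML-randoms that are not weakly Demuth random; the $\omega$-c.a.\ case packages the approximation into a nested Demuth test directly, and the superlow and superhigh cases reduce to the $\omega$-c.a.\ case via $\omega$-c.a.\ presentations of the jump.

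The main obstacle is $(d)\Rightarrow(a)$. Here one must construct, for each order function $h$ and each variant of (d), an ML-random $Y$ in the prescribed class such that any reduction $A=\Phi^{B\oplus Y}$ guaranteed by the hypothesis can be mined together with $B$ to produce a $B$-c.e., $h$-bounded trace for $J^A$. I would relativize the decanter construction of Greenberg--Hirschfeldt--Nies, which in the base case builds an $\omega$-c.a.\ ML-random $Y$ with precisely this trace-extraction property: capital is distributed along levels of $Y$, and each change of $A$ that would cause the trace to exceed $h$ forces a small measure write-off in the ML-test under construction, eventually exhausting the $\ES'$-budget. Relativizing to $B$ requires the decanter to operate above $B$ and to absorb the extra cost incurred when $B_s$ changes; this is feasible because $B$ obeys $\cost_\Omega$, so $B$'s total contribution is finite and the constructed $Y$ still qualifies as $\omega$-c.a.\ (and, with extra care, can be engineered to be superlow or superhigh according to the desired variant of (d)). The delicate part is keeping the trace-destruction argument effective uniformly in $h$ while simultaneously arranging the complexity class of $Y$ prescribed by $\mathcal{C}$.
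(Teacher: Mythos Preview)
Your cycle structure and the $(b)\Rightarrow(c)$ step match the paper closely: the paper also derives a benign cost function from a nested Demuth test via the Hirschfeldt--Miller idea and then uses that $B$, being $K$-trivial, is low for ML-randomness so that $Y$ remains ML-random relative to $B$.

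There is a genuine gap in your $(c)\Rightarrow(d)$, specifically the superhigh case. Superhigh sets need not be $\omega$-c.a., and there is no ``$\omega$-c.a.\ presentation of the jump'' argument available: superhighness says $A'\ge_{tt}\emptyset''$, which gives no approximation bound on $A$ itself. The paper instead invokes the Ku\v{c}era--Nies result that no superhigh set is weakly Demuth random, which is a separate theorem and not a corollary of the $\omega$-c.a.\ case.

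Your $(d)\Rightarrow(a)$ takes a different route from the paper and the sketch has a real obstacle. The paper does \emph{not} build a single $Y$ by a relativized decanter; instead it constructs, for each order function $h$, a $\DemBLR\langle B\rangle$ test (a Demuth-style test whose components are $\Sigma^0_1(B)$ but whose index function is still $\omega$-c.a.\ without oracle) such that any ML-random $Y$ passing it and computing $A$ from $Y\oplus B$ yields a $B$-c.e., $h$-bounded trace for $J^A$. Because $B$ is $K$-trivial it is Demuth traceable, so this test is covered by an unrelativized Demuth test; then one appeals to the black-box fact (Nies 2011) that the superlow and the superhigh ML-randoms are each Demuth test--compatible, i.e.\ every Demuth test is passed by some $Y$ in the class. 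Your plan to run the decanter \emph{relative to $B$} threatens to produce a $Y$ that is only $\omega$-c.a.\ (or superlow, or superhigh) \emph{relative to $B$}, which is not what $(d)$ gives you; the paper's modular split into ``build the test using $B$'' plus ``find $Y$ in $\mathcal C$ passing an unrelativized Demuth test'' is precisely what keeps the complexity class of $Y$ absolute.

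Finally, your $(a)\Rightarrow(b)$ is too glib. There is no one-line ``standard recipe'' converting an $h$-bounded $B$-c.e.\ trace for $J^A$ into a $B$-computable enumeration obeying $\cost$; the paper devotes the longest argument in the section to this direction, using the box-promotion method of Greenberg--Turetsky. The easy direction that relativizes directly from Greenberg--Nies is $(b)\Rightarrow(a)$, not $(a)\Rightarrow(b)$.
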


We remark  that the implication (b) $\RA$ (a) was essentially obtained by Greenberg and Nies \cite[Prop.\ 2.1]{Greenberg.Nies:11}. They  proved the following. Let $A$ be a c.e., jump-traceable set, and let $h$ be an order function. Then there is a benign cost function $c$ such that if $A$ obeys $c$, then $J^A$ has a c.e.\ trace which is bounded by $h$. Suppose now that  $A \models_B c$. Apply the argument in the proof of \cite[Prop.\ 2.1]{Greenberg.Nies:11}  to a $B$-computable enumeration of $A$  showing this. Then the $h$-bounded trace  $\seq{T_n}$ for $J^A$ constructed there is c.e.\ relative to $B$.

For $\+C \sub \MLR$ define $\+ C^\diamond$. By the implication (a) $\RA$ (c) we obtain:  
\begin{corollary} Let $\+C $ be a nonempty class of ML-randoms that contains no weakly Demuth random. Then $\+ C^\diamond$ is downward closed under $\le_{SJR}$. \end{corollary}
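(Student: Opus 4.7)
The plan is that the corollary follows essentially by unpacking the implication (a)$\Rightarrow$(c) of the Theorem. Recalling the standard convention that for a class $\+C \subseteq \MLR$,
\[ \+C^\diamond = \{ A : A \le_T Y \text{ for every } Y \in \+C \}, \]
I would start from $A \le_{SJR} B$ with $B \in \+C^\diamond$ (both $K$-trivial c.e., so that the Theorem applies) and aim to show $A \in \+C^\diamond$, i.e., $A \le_T Y$ for an arbitrary $Y \in \+C$.

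First I would note that by the hypotheses on $\+C$, every $Y \in \+C$ is ML-random and fails to be weakly Demuth random; this is precisely what licenses applying clause (c) of the Theorem to each such $Y$. So from $A \le_{SJR} B$ and (a)$\Rightarrow$(c) I obtain $A \le_T B \oplus Y$. Since $B \in \+C^\diamond$ and $Y \in \+C$ give $B \le_T Y$, we have $B \oplus Y \equiv_T Y$, whence $A \le_T Y$, as required. As $Y \in \+C$ was arbitrary, $A \in \+C^\diamond$.

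The nonempty hypothesis on $\+C$ is essentially a nondegeneracy condition (without it $\+C^\diamond$ would trivially contain every set and downward closure would be vacuous). The substantive hypothesis is that $\+C$ omits all weakly Demuth randoms, which is exactly what makes clause (c) of the Theorem available simultaneously for every $Y \in \+C$. There is no real obstacle here: once the hard direction (a)$\Rightarrow$(c) of the Theorem is in hand, the Corollary is a one-line chase through Turing-reducibility.
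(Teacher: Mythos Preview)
Your proof is correct and follows exactly the approach indicated in the paper, which simply records the corollary as an immediate consequence of the implication (a)$\Rightarrow$(c). You have correctly unpacked the one-line argument: from $A \le_{SJR} B$ with $B \in \+ C^\diamond$ (both $K$-trivial c.e.), the implication (a)$\Rightarrow$(c) yields $A \le_T B \oplus Y$ for every ML-random $Y$ that is not weakly Demuth random, and since each $Y \in \+ C$ is such a set and $B \le_T Y$, you get $A \le_T Y$.
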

For instance, let $\+ C  = \{\Omega_R\}$ for a co-infinite computable set $R$. This shows that the subideals of $K$-trivials considered in \cite{Greenberg.Miller.etal:19,Greenberg.Miller.etal:22} are SJT-ideals.

We will prove the  implications of the theorem   in a cycle, starting from (b). The implications (b) $\RA$ (c) and (c) $\RA$ (d) rely   on literature results, or standard methods from the literature. The implication (d) $\RA$ (a) combines methods from \cite{Nies:11} and \cite{Bienvenu.Downey.ea:14}. The implication (a) $\RA$ (b) uses  the box promotion method; for background see \cite{Greenberg.Turetsky:14,Greenberg.Turetsky:18}.

\subsubsection{Proof of   (b) $\RA$ (c)}  Hirschfeldt and Miller showed that for each null $\PI 2$ class $\mathcal H\sub \cantor$  there is a cost function $\cost$ such that $A \models \cost$ and $Y \in \MLR \cap \mathcal H$ implies $A \le_T Y$; see \cite[proof of 5.3.15]{Nies:book} for a proof of  this otherwise unpublished result.

Suppose that a ML-random set $Y $ fails a nested Demuth test $\seq{G_m}$. Then $Y \in \mathcal H = \bigcap_m G_m$ which is a $\PI 2$ class. We will apply the method of Hirschfeldt and Miller   but  incorporating the additional oracle set $B$, and using the particular   representation of  the $\PI 2$ null class by a nested Demuth test  to ensure that the cost function $\cost$ is benign.  Also see post on weak Demuth randomness by \Kuc\  and Nies in \cite{LogicBlog:11}

Let $p(m)$ and its approximation $p(m,t)$ be as in Definition~\ref{ref: nested Demuth} of nested Demuth tests. We define the benign cost function $\cost$ as follows.  Define for $k \le t$

\begin{eqnarray*} r(k,t)&=& \min\{m \colon \, \exists s. k< s\le t \, [p(m,s-1) \neq p(m, s)]\} \\
			V_{k,t}&= & \bigcup_{k \le s \le t} G_{r(k,s)}[s] \\
			\cost(k,t)&=& \leb V_{k,t}. \end{eqnarray*} 
			
Clearly $r(k-1,t) \ge r(k,t)$ for $k>0$, hence the $V_{k,t}$ are nested and $\cost(k,t)$ is nonincreasing in $k$. Similarly, $\cost(k,t)$ is nondecreasing in $t$. Note that if $p(m,s)$ has stopped changing by a stage $ k$,  then $r(k,t) > m$ for each $t \ge k$, and hence $V_{k,t}$ is contained in the final version $G_m$ of the $m$-th component. 
By the conventions in Definition~\ref{ref: nested Demuth} above,  if $\cost(k,t) > \tp{-m}$ then $p(m,s-1)\neq p(m,s)$   for some $s$ in the interval $(k,t]$. 
Since the number of changes of $p(m, s)$  is bounded computably in $m$, this shows that the cost function $\cost$ is indeed benign.

Suppose now that $A \models_B \cost$ via a $B$-computable approximation $\seq{A_s}$.   To show $A \leT Y\oplus B$  for each ML-random set $Y \in
\mathcal \bigcap_m G_m$,  
we enumerate a
Solovay test $\+ S$ relative to $B$, i.e., a uniformly $\SI 1 $ sequence $\seq {\+ S_n}$ relative to $B$ such that $\sum_n \leb \+ S_n < \infty$. At stage  $s$,  when  $x<s$ is least such that $A_s(x)\neq A_{s-1}(x) $,   list $V_{x,s}$ in~$\+ S$.      
This yields     a  Solovay test relative $B$ by the hypothesis that the approximation of~$A$ obeys~$\cost$.

Since $B$ is low for ML-randomness, the set $Y$ is ML-random relative to $B$. Hence $Y$ passes the Solovay test $\+ S$.  So choose $s_0$ such that $  Y \not \in V$ for any
$V$ listed in~$S$ after stage $s_0$. Given an input $x\ge s_0$, using~$Y$ as an oracle
  compute $t >x$ such that $[Y\uhr t] \sub V_{x,t}$.
We claim that  $A(x) = A_t(x)$. Otherwise  $A_{s}(x) \neq A_{s-1}(x)$ for some $s > t$, which
 would cause   $V_{x,s}$ (or some superset $V_{y,s}$, $y< x$) to be listed in~$G$, contrary to $Y\in
V_{x,t}$.

  \subsubsection{Proof of   (c) $\RA$ (d)} Note that each superlow set is $\omega$-c.a. So it suffices to show that no  $\omega$-c.a.\ set, and no superhigh set, is weakly Demuth random. For $\omega$-c.a.\ sets this is immediate from the definitions. For superhigh sets, this is a  result of \Kuc\ and Nies \cite[Cor.\ 3.6]{Kucera.Nies:11}.


\subsubsection{Proof of   (d) $\RA$ (a)}  For convenience we restate the   implication to be shown:

\n  {\it Let $\+ C$ be the class of superlow, or of superhigh sets.
 Suppose $A$ and $B$ are $K$-trivial c.e.\ sets.  If $A \leT Y \oplus B$ for each $Y \in \+ C \cap \MLR$, then  $A \le_{SJR} B$.} 
 
 We remark that for this implication, the hypothesis suffices   that $A$ is c.e.\ and superlow, and $B$ is Demuth traceable as discussed below; in particular, this holds if $B$ is   c.e.\ and superlow.

 As mentioned, the proof of this implication  combines methods from  Nies~\cite{Nies:11} and Bienvenu et al.~\cite{Bienvenu.Downey.ea:14}.  Below we will review and use  some    technical   notions   from these articles.
  First we consider   \cite[Def.\ 1.7]{Bienvenu.Downey.ea:14}. Given an oracle $B$, a $\DemBLR\la B \ra$ test generalises  a Demuth test $\seq {G_m}\sN m$  in that $G_m  \sub \cantor $ is a  $\SI 1 (B)$ class with $\leb G_m \le \tp{-m}$, and there is a  function $f$ taking $m$ to an index for $G_m$ such that $f$ has a $B$-computable approximation $g$, with  $g(m,t)$  having a    number of changes that is computably bounded in $m$. (Here we will only need the case that $f$ is $\omega$-c.a., in which case   the only difference to usual Demuth tests  is that the versions of the components are uniformly $\SI 1 (B)$,   rather than   $\SI 1$.)  

The authors in \cite{Bienvenu.Downey.ea:14} defined  that an oracle $B$ is low for $\DemBLR$ tests if every $\DemBLR\la B\ra$ test can be covered by a Demuth test, in the sense that passing  the Demuth test implies passing the $\DemBLR\la B\ra$ test.  In \cite[Thm. 1.8]{Bienvenu.Downey.ea:14} they characterized such oracles via a tracing condition called Demuth traceability.  For c.e.\ sets,  this  condition is equivalent  to being jump traceable, or again superlow~\cite[Prop.\ 4.3]{Bienvenu.Downey.ea:14}. 

 The  
  following lemma    holds for   any oracle $B$. We will prove it shortly.

  \begin{lemma} \label{lem:hDemtest}   For a  given order function~$h$   and a superlow c.e.\ set $A$, one  can build a $\DemBLR \la B \ra $ test  $(\mathcal H_m)\sN m$ such that, if  $A\leT Y\oplus B$ for some ML-random set~$Y$ passing this test, then  the function $J^A$ has a $B$-c.e.\ trace with  bound~$h$. \end{lemma}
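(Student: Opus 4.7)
The plan is to build the $\DemBLR\la B\ra$ test $(\mathcal H_m)$ by running in parallel, for each partial computable functional $\Gamma_e$, a procedure producing a $B$-c.e.\ partial trace $T^e$ bounded by some order function $h_e$ and a test component $(\mathcal H^e_m)$, where the $h_e$ sum to at most $h$. The combined trace $T_n = \bigcup_e T^e_n$ is $h$-bounded (up to finitely many values, patched by hand), and the combined test is $\mathcal H_m = \bigcup_{e \le m} \mathcal H^e_{m+e+1}$, which has $\lambda(\mathcal H_m) \le 2^{-m}$. We need, for each $e$-strand: if a ML-random $Y$ passes $(\mathcal H^e_m)$ and $\Gamma_e^{Y \oplus B} = A$, then $J^A(n) \in T^e_n$ for almost all $n$ with $J^A(n)\DA$.

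Fix $e$ and drop the subscript. Using superlowness of $A$, fix a computable approximation $(A_s)$ with computable mind-change bound $c(n)$. At stage $s$, for each $n < s$ with $J^{A_s}(n)[s]\DA$ of use $u$ and value $v$, examine $\tau_{n,s} = A_s \uhr u$ and the $\SI 1(B)$ class $W_{\tau_{n,s}}[s] = \{Y \colon \Gamma^{Y \oplus B}\uhr u \DA = \tau_{n,s} \text{ in } s \text{ steps}\}$. Each new $\tau$ at position~$n$ consumes budget $h_e(n)$; while budget remains, enumerate $v = J^\tau(n)$ into $T^e_n$. For $\tau$'s beyond budget, place approximations of $W_\tau$ into appropriate test levels, and use the BLR feature (via the mind-change bound $c$) to retract measure when $\tau_{n,s}$ reverts to a previously-seen string. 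This coordinated promotion/retraction scheme is adapted from the proof of Theorem~1.8 in \cite{Bienvenu.Downey.ea:14} together with the test-extraction technique from reductions in \cite{Nies:11}.

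For correctness, suppose $\Gamma^{Y \oplus B} = A$ and $Y$ passes $(\mathcal H^e_m)$. For each $n$ with $J^A(n)\DA$ and final use $u$, we have $Y \in W_{\tau^*}$ where $\tau^* = A\uhr u$. If $\tau^*$ were processed beyond budget at position~$n$, the construction would place positive measure of $W_{\tau^*}$ into infinitely many test levels (via promotion across $n$), trapping $Y$ in $\limsup \mathcal H^e_m$ and contradicting $Y$'s passing of the test. Hence $\tau^*$ is processed within budget, and $J^A(n) = J^{\tau^*}(n) \in T^e_n$.

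The main obstacle will be the detailed promotion/retraction bookkeeping: simultaneously keeping $\lambda(\mathcal H^e_m) \le 2^{-m}$ at every stage, ensuring the index approximations are BLR with a computable mind-change bound depending only on $m$, and guaranteeing that the ``wrong'' processing of $\tau^*$ traps $Y$ in infinitely many levels. The superlow mind-change bound $c$ makes retractions computably bounded, but the measure-distribution must be carefully arranged so that no single level is overloaded and so that $W_{\tau^*}$ appears in arbitrarily high levels on the wrong path. The combination follows the pattern of \cite[\S5]{Bienvenu.Downey.ea:14}, extended to handle the trace of $J^A$ rather than a $B$-computable function.
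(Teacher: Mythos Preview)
Your overall architecture is right, but there is a genuine gap in the correctness argument. You claim that if the true $\tau^* = A\uhr u$ is processed ``beyond budget'' at a single position~$n$, then $W_{\tau^*}$ ends up in \emph{infinitely many} test levels, contradicting that $Y$ passes the test. Your description provides no mechanism for this: a single $n$ feeds only boundedly many levels, and you have no control over $\leb W_{\tau^*}$, so once you cut off at $\tp{-m}$ you cannot guarantee that $Y$ actually lands in that level. The phrase ``promotion across $n$'' labels the difficulty rather than solving it. The root issue is the order: you use the trace first and the test second, so by the time a string is sent to the test you have exhausted the trace budget and have nothing to fall back on when the test level overflows its measure bound.

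The paper inverts this. It partitions inputs as $I_m = \{x : \tp m \le h(x) < \tp{m+1}\}$, and at level~$m$ it \emph{always} tries to put into $G_m$ every oracle $Z$ with $\Phi^{Z\oplus B}$ extending the current $A$-prefix, cut off at measure~$\tp{-m}$. The trace is the fallback: whenever a string $\alpha$ makes the corresponding oracle-set exceed $\tp{-m}$, one enumerates $J^\alpha(x)$ into $T_x$. Since oracle-sets for incompatible initial segments are disjoint, only about $\tp m \le h(x)$ many can trigger the cut-off, which yields the bound on $|T_x|$. Correctness is then automatic: if $Y$ passes the test and $A = \Phi^{Y\oplus B}$, then for almost every $m$ we have $Y \notin G_m$; but $Y$ does lie in the uncut version, so the cut-off fired for some $\alpha \prec A$, hence $J^A(x) \in T_x$. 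The condition ``measure exceeds $\tp{-m}$'' is exactly what is both cheap enough to trace and \emph{forced} whenever $Y$ escapes $G_m$; your budget-based switch has the first property but not the second. The paper also handles the quantification over reductions via a single prefix-coding functional $\Phi(0^e1Y\oplus B)=\Phi_e(Y\oplus B)$ and a test-combination lemma from \cite{Nies:11}, rather than by splitting $h$ into $\sum_e h_e$.
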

 Nies \cite{Nies:11} defined  a class $\CCC \sub 2^\NN $ to be  \emph{Demuth test--compatible}  if 
  each  Demuth test is passed by a   member of $\CCC$. 
  Using   some methods from \cite{Greenberg.Hirschfeldt.ea:12}, he  showed in  \cite[Section 4]{Nies:11} that the  superlow ML-random sets, as well as the    superhigh ML-random sets,  are Demuth test--compatible.

 If a class $\+ C$ is Demuth test--compatible and $B$ is Demuth traceable, then each $\DemBLR\la B\ra$-test is passed by a set in $\+ C$.  So the lemma    suffices to establish the implication (d) $\RA$ (a) in question.

To verify   Lemma~\ref{lem:hDemtest},  let  $\Phi$ be a   Turing functional such that  $\Phi(0^e1 Y \oplus B)= \Phi_e(Y\oplus B)$   for each~$e,Y$. We  reduce the lemma to     the following.

	\begin{claim} \label{claim:DemYSm} There is  a $\DemBLR \la B \ra $     test $(\mathcal S_m)\sN m$ such that, if  $A= \Phi^{Y\oplus B}$ for some $Y$ passing this test, then   $J^A$ has a $B$-c.e.\ trace with  bound~$h$.  \end{claim}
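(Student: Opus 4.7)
The plan is to build the test using a simulation-plus-measure-threshold device. Let $\hat\Phi$ be the functional defined by $\hat\Phi^{Y\oplus B}(n) = J^{\Phi^{Y\oplus B}}(n)$, so that whenever $A = \Phi^{Y\oplus B}$ one has $\hat\Phi^{Y\oplus B}(n) = J^A(n)$ on the domain of $J^A$. For each $n,k$, set
\[ M^n_k \;=\; \bigl\{Y \in \cantor :\, \hat\Phi^{Y\oplus B}(n)\DA = k\bigr\}, \]
a $\SI{1}(B)$ class. For fixed $n$ these classes are pairwise disjoint, so the set
\[ T_n \;=\; \bigl\{k :\, \leb\, M^n_k \ge 1/h(n)\bigr\} \]
has size at most $h(n)$, and it is $B$-c.e.\ because $\leb M^n_k$ can be approximated from below using $B$. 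This is the candidate trace.

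To force $J^A(n) \in T_n$, the test will catch any offending $Y$. Using superlowness of $A$, fix a computable approximation $g(n,s) \to J^A(n)$ with a computable bound $c(n)$ on the number of mind-changes. For each level $m$, allocate a measure budget of $2^{-m}$ across the $n$'s by setting
\[ \+ S_m[s] \;=\; \bigcup_{n \le f(m)} \bigl(M^n_{g(n,s)}\bigr)_{\le \varepsilon_{n,m}}, \]
where the truncation halts the $B$-c.e.\ enumeration of $M^n_{g(n,s)}$ at measure $\varepsilon_{n,m}$, and where $f$ and $\varepsilon_{n,m}$ are chosen computably so that $\sum_{n \le f(m)} \varepsilon_{n,m} \le 2^{-m}$. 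Each change in $g(n,s)$ requires restarting the corresponding component of $\+ S_m$; the total number of index changes is therefore at most $\sum_{n \le f(m)} c(n)$, a computable function of $m$, so $(\+ S_m)_{m \in \NN}$ is a genuine $\DemBLR\la B\ra$ test.

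For the verification, suppose $Y$ passes the test and $A = \Phi^{Y\oplus B}$. If $J^A(n) \notin T_n$, then $\leb M^n_{J^A(n)} < 1/h(n)$; once $m$ is chosen so that $n \le f(m)$ and $\varepsilon_{n,m} \ge 1/h(n)$, the whole class $M^n_{J^A(n)}$ is enumerated into $\+ S_m$, forcing $Y \in \+ S_m$. Arranging this to occur for infinitely many $m$ for each potentially bad $n$ contradicts $Y$ passing the test, so $J^A(n) \in T_n$ for all but finitely many $n$; the finitely many exceptions can be absorbed by enlarging each offending $T_n$ with a bounded number of hard-coded constants.

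The main obstacle is the joint scaling constraint on $f$ and $\varepsilon_{n,m}$: for each $n$, we need infinitely many levels $m$ at which simultaneously $n \le f(m)$ and $\varepsilon_{n,m} \ge 1/h(n)$, while keeping $\sum_{n \le f(m)} \varepsilon_{n,m} \le 2^{-m}$. A naive allocation such as $\varepsilon_{n,m} = 2^{-n-m-2}$ works for fast-growing $h$, but for slow order functions like $h(n) = \log n$ the threshold $1/h(n)$ decays too slowly and one must interleave the contributions of different $n$'s across test levels more cleverly --- for instance by stratifying according to $\lfloor \log_2 h(n) \rfloor$ and giving each stratum its own geometric share --- while preserving the computable bound on index changes needed for the $\DemBLR\la B\ra$ condition. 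The remainder is bookkeeping on measure budgets and mind-change counts.
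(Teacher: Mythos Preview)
Your measure-threshold trace $T_n = \{k : \leb M^n_k \ge 1/h(n)\}$ is correct and $B$-c.e.\ with $|T_n|\le h(n)$. The gap is in the test. To catch a bad $n$ (one with $J^A(n)$ convergent but outside $T_n$) at level $m$ you need $\varepsilon_{n,m} \ge 1/h(n)$, since nothing is known about $\leb M^n_{J^A(n)}$ in advance beyond the upper bound $1/h(n)$. Your stated plan of covering each $n$ at infinitely many levels is immediately impossible: that alone costs at least $1/h(n)$ per level. Falling back to covering each $n$ at a single level $m(n)$ (with $n\mapsto m(n)$ finite-to-one, so that infinitely many bad $n$'s force infinitely many levels to be hit) still requires total measure at least $\sum_n 1/h(n)$ against a supply of $\sum_m 2^{-m} = 2$. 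For any order function with $\sum_n 1/h(n) = \infty$ --- already $h(n)=n$, let alone $h(n)=\lfloor \log n\rfloor$ --- this is impossible, and stratifying by $\lfloor \log_2 h(n)\rfloor$ does not change the arithmetic. The obstacle you flag is therefore not bookkeeping but a genuine obstruction to the per-$n$ design.

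The paper sidesteps this by not testing the values $J^A(n)$ individually. For each $m$ it sets $I_m = \{x : 2^m \le h(x) < 2^{m+1}\}$ and builds a \emph{single} component $G_m$ of measure at most $2^{-m}$, enumerating those $Z$ for which $\Phi^{Z\oplus B}$ extends the current initial segment of $A$ out to the uses of the convergent $J^A(x)$, $x\in I_m$; a new version is started only when some new such $J^A(x)$ converges, so there are at most $|I_m|$ versions. The trace then puts $y$ into $T_x$ whenever some string $\alpha$ has $J^\alpha(x)=y$ and the set $\{Z:\Phi^{Z\oplus B}\succeq \alpha\}$ has measure exceeding $2^{-m}$; distinct values of $y$ force incomparable such $\alpha$'s and hence disjoint witness sets, giving $|T_x|\le 2^m \le h(x)$. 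One measure quota $2^{-m}$ thus serves all of $I_m$ at once, because the constraint is on initial segments of $A$ rather than on individual output values --- precisely the bundling that a per-$n$ decomposition gives up.
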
 
	
This claim  suffices to obtain Lemma~\ref{lem:hDemtest}: let  $(\mathcal H_m)\sN m$  be the $\DemBLR\la B \ra$  test obtained as in \cite[Lemma 2.6]{Nies:11}  applied to the test $(\mathcal S_m)\sN m$. Thus, if a set~$Y$ passes   $(\mathcal H_m)\sN m$, then $0^e1Y$ passes   $(\mathcal S_m)\sN m$ for each~$e$. 
By hypothesis of Lemma~\ref{lem:hDemtest},  $A\leT Y\oplus B$ for some $Y$ passing $(\mathcal H_m)\sN m$, so we have $A
		 =\Phi_e^{Y\oplus B}$ for some $e$, and hence $A= \Phi({0^e1Y}\oplus B)$. Since $0^e1Y$ passes   $(\mathcal S_m)\sN m$, we can conclude from the claim that $J^A$ has a $B$-c.e.\ trace with  bound~$h$.

It remains to prove   Claim~\ref{claim:DemYSm}. Write $\mathcal U_e= \Opcl {W^B_e}$. For the duration of the proof of the claim,    a sequence $\seq {G_m}$ of open sets will  be called an  \emph{$(A,B)$-special test}  if there is a Turing functional $\Gamma$ such that $G_m$ is the final version of the sets $G_m[t]= \+ U_{\Gamma^A(m,t)}$ over stages $t$, and  there  is a computable function~$g$ such that the number of changes of $\Gamma^A(m,t)$ is  bounded   by $g(m)$.  A set $Y$ passes such a test in the usual sense of Demuth tests, namely, $Y \not \in G_m$ for almost all $m$.

  We first observe that since  $A$ is superlow and c.e., there is a $\DemBLR\la B \ra$ test $\seq{\+ S_m}$ that covers  $\seq {G_m}$ in the sense  that each $Y$ passing $\seq{\+ S_m}$ passes $\seq {G_m}$. To see this, let $\Theta$ be a Turing functional such that $\Theta^X(m,i)$ is the $i$-th value of $\Gamma^X(m,i)$, for each oracle $X$. Note that by \cite[Lemma 2.7]{Nies:11}   there is a computable enumeration $(A_s)\sN s$ of $A$ and a computable function $f$ such that for at most $f(m,i)$ times a computation   $\Theta^{A_s}(m,i)$ is destroyed.  At stage $t$,  let 
\bc $\mathcal S_m[t]= \mathcal U_{\Theta^{A_t}(m,i)}$ \ec
where $i$ is maximal such that the expression on the right is defined at stage $t$. Clearly the number of times a version $\mathcal S_m[t]$ changes is bounded by $\sum_{i=0}^{g(m)} f(m,i)$.  Thus, $(\mathcal S_m)\sN m  $ is  a Demuth test. If an oracle $Y$ is in  $  G_m$ then $Y$ is in the final version $\mathcal S_m[t]$, so the new test indeed covers $\seq {G_m}$.  


So to establish Claim~\ref{claim:DemYSm} it suffices to build an $(A,B)$-special test $\seq {G_m}$ in place of $\seq{\+ S_m}$. To do so, we mostly follow~\cite[proof of Thm 3.2]{Nies:11}. For $m \in \NN$ let \bc $I_m= \{x\colon \, 2^m \le h(x) < 2^{m+1}\}$. \ec
  At stage $t$,  let  $u$ be  the maximum use of the computations $J^A(x)$ for $x  \in   I_m$ that exist.  We enumerate into the current version $G_m[t]$ all oracles $Z$ such that $\Phi_t^{Z\oplus B} \succeq A\uhr u$,  as long as the measure stays below $\tp{-m}$.  Whenever a new computation $J^A(x)$ for $x  \in   I_m$   converges at a stage, we start a new version of $ G_m$. Clearly, there will be  at most  $\#  I_m$ many  versions. 

More formally, there is a Turing functional $\Gamma$ such that for each string $\aaa$ of length~$t$, 
$$  \mathcal U_{\Gamma^\aaa(m,t)} = \{Z \colon \,  \fa x \in   I_m \  [ J^\aaa_t(x) \DA \text{with use} \, u \RA \aaa \uhr u \preceq \Phi^{Z\oplus B}_t ]\}. $$
Let $  G_m[t] = 
  \mathcal U_{\Gamma^A(m,t)}^{(\le \tp{-m})}$. Here for a $\SI 1(B)$ set $\+ W$ and rational $\epsilon >0$, as in the Cut-off Lemma~\cite[Lemma 2.1]{Nies:11} by $\+ W^{(\le \epsilon)}$ one denotes the $\SI 1(B)$ set given by the  enumeration capped by measure $\epsilon$.  By the uniformity of the Cut-off Lemma, from $m,t$ with the help of oracle $A$ we can compute an index for this  effectively open class. Thus,  the versions  $  G_m[t] $ define  an $(A,B)$-special    test $(  G_m)\sN m$.  

The  $B$-c.e.\ trace  $(T_x)\sN x$ is defined as follows.
At stage $t$, for each string~$\aaa$  of length $t$ such that $y = J_t^\aaa(x)$ is defined and the  measure of the current approximation to the c.e.\ open set $\mathcal U_{\Gamma^\aaa(m,t)}$ exceeds $\tp{-m}$,   put $y$ into $T_x$. The idea is that,  if $y=J^A(x)$,  then this must happen for some $\aaa \prec A$, otherwise~$Y $ can be put into $ G_m$ because there is no cut-off. 

The verification is similar to~\cite[proof of Thm 3.2]{Nies:11} \emph{mutatis mutandis}. We omit the proofs of the two claims that follow, which are similar to the claims in the proof of the corresponding result   \cite[Thm 3.2]{Nies:11}.

\n {\bf Claim 1.} {\it $(T_x) \sN x$ is a $B$-c.e.\ trace such that for each $x$ we have $\# T_x \le  h(x)$.} 
%

%

 \vsps
 
\n {\bf Claim 2.} {\it For almost every $x$, if $y= J^A(x)$ is defined,  then $y \in T_x$.}

This completes the proof of Claim~\ref{claim:DemYSm} and hence Lemma~\ref{lem:hDemtest}, and hence of  the implication in question.  

%


%
%
 
 \subsubsection{Proof of   (a) $\RA$ (b)}  This implication works in the context of much weaker assumptions on $A$ and $B$. We state this   separately. 
\begin{proposition}\label{lem:SJT_JT_to_obey}
If $A \le_{SJR} B$ and $B$ is jump traceable, then for every benign cost function $c$, we have $A \models_B c$.
\end{proposition}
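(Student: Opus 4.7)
The plan is to relativize to the oracle $B$ the Greenberg--Nies theorem characterizing strongly jump traceable c.e.\ sets as those obeying every benign cost function. The SJT-reducibility hypothesis $A \leq_{SJR} B$ will play the role of relativized strong jump traceability, and the jump traceability of $B$ is the additional ingredient that will let us convert $B$-c.e.\ objects into $B$-computable approximations with a computably bounded number of mind-changes.

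First, I would fix a benign cost function $c$ together with its computable bound $g(\epsilon)$ on the length of sequences of cost-events of size $\ge \epsilon$, and select an auxiliary order function $h$ growing quickly enough that $h(n) \ge g(2^{-n})$. Invoking $A \leq_{SJR} B$, this yields a $B$-c.e.\ trace $(T_n)\sN n$ with $|T_n| \le h(n)$ that traces $J^A$. Second, I would introduce a coding functional so that at designated milestone stages $s_n$ (the least stages at which the cumulative cost of the standard computable enumeration of $A$ exceeds an appropriate threshold), $J^A$ records the current initial segment $A_{s_n}\!\uhr{m_n}$; the milestones are arranged so that the total cost charged at level $n$ is at most $2^{-n} h(n)$. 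Each entry of $T_n$ then becomes a candidate value of an initial segment of $A$, and there are at most $h(n)$ distinct candidates per level.

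Third, I would assemble a $B$-computable enumeration $\seq{A_s}$ by a box-promotion procedure. Because $B$ is jump traceable, the $B$-c.e.\ enumeration of each $T_n$ can be replaced by a $B$-computable approximation $\seq{T_n[t]}$ whose total number of mind-changes is bounded by a computable function of $n$: one traces the $B$-oracle computations that list members into $T_n$ and uses the trace to detect when an element may safely be regarded as stable. The construction then outputs, at stage $s$, the initial segment of $A$ compatible with the currently stable candidate at the highest level whose approximation has stabilized, and promotes to a higher level whenever a lower-level approximation changes. Benignness of $c$, together with the bound $h(n) \ge g(2^{-n})$, ensures that each level contributes at most a bounded amount of cost, and that these contributions sum to a finite total.

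The main obstacle I expect is the synchronisation between three quantitative bounds: the benign bound $g$, the trace size $h(n)$, and the computable bound on $B$-mind-changes of the approximation to $T_n$. They must be dovetailed so that each $c$-cost increase at level $n$ can be charged against a single change in the $B$-computable approximation of some $T_m$, and so that promotions across levels do not cause the total cost to blow up. This is the combinatorial heart of the box-promotion argument in the Greenberg--Turetsky framework; no new idea beyond that framework seems needed, but the bookkeeping to combine the $B$-c.e.\ trace with jump traceability of $B$ into a single $B$-computable enumeration must be done with care.
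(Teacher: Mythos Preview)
There is a genuine gap in how you propose to use the jump traceability of $B$. The hypothesis $A\le_{SJR}B$ provides, for any order function, a $B$-c.e.\ trace of $J^A$; but the functional being traced is a \emph{plain} Turing functional with oracle $A$ only. When you build the testing functional $\Phi^A$ for box promotion, the construction of $\Phi$ itself must be oracle-free in $B$: you cannot let $\Phi$ consult the $B$-c.e.\ trace to decide where to place its next test, because then $\Phi$ would be a $B$-functional and the SJR hypothesis would no longer apply to it. Your suggestion that jump traceability of $B$ ``converts $B$-c.e.\ objects into $B$-computable approximations with a computably bounded number of mind-changes'' does not help: a $B$-c.e.\ set of size at most $h(n)$ already has a $B$-computable enumeration with at most $h(n)$ additions, so nothing is gained, and you still need $B$ to see it. (Also, your second paragraph has $J^A$ recording $A_{s_n}\uhr{m_n}$ at milestone stages of an enumeration of $A$; but $J^A$ depends only on $A$, not on any enumeration, so this cannot be arranged.)

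The paper uses jump traceability of $B$ in the opposite direction. It builds a second functional $\Psi^B$ whose values record both the elements being enumerated into the $B$-c.e.\ trace $U_x^B$ and the lengths that $B$ believes should be promoted, and then uses the \emph{oracle-free} c.e.\ trace $(V_y)$ of $\Psi^B$ (coming from $B$ being $h$-JT) as a computable over-approximation to this $B$-side information. This is precisely what allows the testing functional $\Phi^A$, together with the hypercube apparatus on the sets $J_n$, to be constructed without any reference to $B$; the sizes of the auxiliary sets $I_n$ and $J_n$ are calibrated in advance from the single bound $h$ governing \emph{both} traces, and a tree-counting argument on $J_n$ then caps the number of promotions at level $n$ by $n-1$. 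Only after $\Phi$ is fully specified does one work relative to $B$, reading off the $B$-computable approximation $(A_t)$ directly from $U_x^B$. So the proof is not a relativisation of Greenberg--Turetsky but a two-trace construction in which the $B$-trace of $J^A$ and the unrelativised trace of $J^B$ interact.
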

 
First we need another lemma. 
\begin{lemma}
Suppose $T$ is a finite tree, and $v_0, \dots, v_{n-1} \in T$ are pairwise distinct, such that each $v_i$ has at least 2 children in $T$.  Then $T$ has at least $n+1$ leaves.
\end{lemma}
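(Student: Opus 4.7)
The plan is to use the standard edge-counting identity for finite rooted trees. Throughout, let $c(v)$ denote the number of children of $v$ in $T$, and let $\ell = \ell(T)$ denote the number of leaves. The key observation is that a nonempty finite rooted tree on $N$ nodes has exactly $N-1$ edges, and each edge corresponds uniquely to a (node, parent) pair, so $\sum_{v \in T} c(v) = N - 1$. Leaves contribute $0$ to this sum, so restricting to non-leaves gives $\sum_{v \text{ non-leaf}} c(v) = N - 1$. Subtracting the number of non-leaves $N - \ell$ from both sides yields the identity
\[
\sum_{v \text{ non-leaf}} (c(v) - 1) \;=\; \ell - 1.
\]

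Each $v_i$ has $c(v_i) \ge 2$, so $v_i$ is a non-leaf and contributes at least $1$ to the sum on the left. Since the $v_i$ are pairwise distinct, we conclude $\ell - 1 \ge n$, i.e.\ $\ell \ge n+1$, which is the desired bound.

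I expect no serious obstacle; the only thing to be careful about is the convention that $T$ is a rooted tree (so that ``children'' is well-defined) and that $T$ is nonempty (so the edge count $N-1$ is meaningful and the base case $n=0$ holds automatically). Both are tacit in the paper's setting. An alternative argument by induction on $n$ would also work: pick $v_i$ $\sqsubseteq$-maximal among the $v_j$ (no $v_j$ is a proper descendant), replace the subtree below $v_i$ by a single leaf, apply the inductive hypothesis to the smaller tree with branching set $\{v_0,\dots,v_{n-1}\}\setminus\{v_i\}$ to get $\ge n$ leaves, and observe that restoring the original subtree below $v_i$ (which has $\ge 2$ leaves of its own, one per child) results in a net gain of at least one leaf. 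The edge-counting proof above is cleaner, so that is what I would write up.
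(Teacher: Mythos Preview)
Your edge-counting proof is correct. The paper actually omits the proof, saying only that it is ``a simple induction on $n$,'' which is precisely the alternative you sketch at the end. So your secondary argument matches the paper's intended route, while your primary argument is a genuinely different (and slicker) approach: the identity $\sum_{v\text{ non-leaf}}(c(v)-1)=\ell-1$ dispatches the lemma in one line without any case analysis or choice of a maximal $v_i$. The induction buys nothing extra here; your counting argument is the one to keep.
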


We omit the proof, which is a simple induction on $n$.

\begin{proof}[Proof of Proposition~\ref{lem:SJT_JT_to_obey}] Fix $c$ a benign cost function and $f$ an $\omega$-c.a.\ function with $c(f(n)) < 2^{-n}$ for all $n$.  We also denote by $f$ a computable approximation to $f$, so that $\lim_s f(n, s) = f(n)$ for all $n$, and such that for each $n$, $|\{ f(n, s) : s \in \omega\}| \le g(n)$, where $g$ is some total computable function.  We may assume that $f(n,s)$ is non-decreasing in both $n$ and $s$.  Fix $h$ such that $B$ is $h$-JT.

\smallskip

First we employ standard tricks to assume we already have the traces for the partial functions we intend to build.  We define sets $I_n^e$ and $J_n^e$ for $e < n < \omega$:
\begin{itemize}
\item For each $e$, the sets $\{ I_n^e : e < n < \omega\}$ partition $\omega^{[2e]}$ such that
\[
|I_n^e| = g(n) + \sum_{i < n} h(\seq{2e, n+1,i});
\]
\item For each $e$, the sets $\{ J_n^e : e < n < \omega\}$ partition $\omega^{[2e+1]}$ such that
\[
|J_n^e| = 2^{\sum\{ h(\seq{2e+1, x, i}) \ : \ x \in I_n^e \ \& \ i < n\}}.
\]
\end{itemize}
We then define $k$ an order function such that for all $e < n$ and all $x \in I^e_n \cup J^e_n$, $k(x) \le n$.

We will build partial functions $\Phi^A$ and $\Psi^B$.  Fix an effective listing of all c.e.\ $h$-traces and all oracle c.e.\ $k$-traces.  For $e = \seq{e_0, e_1}$, we will construct $\Phi$ and $\Psi$ on $\omega^{[2e]} \cup \omega^{[2e+1]}$ under the assumption that the $e_0$th element of the first listing traces $\Psi^B$ and the $e_1$th element of the second listing traces $\Phi^A$ with oracle $B$.  For the remainder of the construction, fix $e$, and let $(V_y)_{y \in\omega}$ and $(U_x^-)_{x \in \omega}$ be these elements, respectively.  We will drop the $e$ superscripts on the $I$ and $J$.

\smallskip

Suppose first that $(U_x)_{x \in \omega}$ were an oracle-free c.e.\ $k$-trace of $\Phi^A$.  We will have a module for each $n > e$.  Our module for $n$ seeks to test $A$ at various lengths, in particular at length $f(n)$.  At stage $s = n$, or at stage $s > n$ with $f(n, s) \neq f(n, s-1)$, the module will test length $f(n, s)$.  It will also test various lengths as they are provided to it by the $n+1$ module.

For a length $\ell$, it will first test $A\uhr{\ell}$ in an element of $x \in I_n$ -- that is, we define $\Phi^\sigma(x) =\sigma$ for all $\sigma \in 2^\ell$, and we monitor the strings enumerated into $U_x$.  This will narrow down the possibilities for $A\uhr{\ell}$ to a set of at most $n$ strings.  The module will then test each of those strings in $J_n$ (we will say more about how testing is done in $J_n$ in a moment).  If more than one of those strings were to pass this second test, we would {\em promote} $\ell$, i.e.\ tell the $n-1$ module that it is responsible for testing $A\uhr{\ell}$.  We will arrange that the $n$ module promotes at most $n-1$ lengths, so that the $n-1$ module will need to handle at most $g(n-1) + n-1$ lengths.

For $n = e+1$, the $n$ module will still declare lengths to be promoted, even though there is no $n-1$ module to promote them to.  This will not affect the running of the $n$ module, and so it will continue on as if there were an $n-1$ module.

Of course, $(U_x)_{x \in \omega}$ only traces $\Phi^A$ with oracle $B$, so we will need to rely on $(V_y)_{y \in \omega}$ to approximate this.  When we decide to test a length $\ell$ at $x \in I_n$, we will simultaneously define $\Psi^Y(\seq{2e+1,x, i})$ for $i < n$ and all oracles $Y$ to be the $i$th element enumerated into $U_x^Y$, if such an element exists.  Then
\[
A\uhr{\ell} \in U_x \subseteq 2^\ell \cap \bigcup_{i < n} V_{\seq{2e+1,x,i}},
\]
which has size at most $\sum\{ h(\seq{2e+1, x, i}) \ : \ i < n\}$.  We will test each of these strings on $J_n$.  So we will test at most $\sum\{ h(\seq{2e+1, x, i}) \ : \ x \in I_n^e \ \& \ i < n\}$ strings on $J_n$, which the reader will note is the exponent of the size of $J_n$.

The oracle $B$, using $(U_x^B)_{x \in \omega}$, will have an opinion as to which lengths should be promoted.  Again, we will arrange that $B$ sees at most $n-1$ lengths which are to be promoted by the $n$ module.  We define $\Psi^Y(\seq{2e, n, i})$ for $i < n-1$ and all oracles $Y$ to be the $i$th length which $Y$ believes the $n$ module should promote.  So the lengths $B$ believes should be promoted by the $n$ module will be elements of $\bigcup_{i < n-1} V_{\seq{2e, n, i}}$, which has size at most $\sum_{i < n-1} h(\seq{2e, n,i})$.  Thus the $n-1$ module will need to handle at most $g(n-1) + \sum_{i < n-1} h(\seq{2e, n,i})$, which the reader will note is the size of $I_{n-1}$.  So each $I_n$ is large enough to test every length the $n$ module must consider.

We must explain what it means to test a string on $J_n$.  We identify $J_n$ with $2^{\sum\{ h(\seq{2e+1, x, i}) \ : \ x \in I_n^e \ \& \ i < n\}}$, which we think of as a hypercube of side length 2 and dimension $\sum\{ h(\seq{2e+1, x, i}) \ : \ x \in I_n^e \ \& \ i < n\}$.  When we seek to test a string $\sigma$ on $J_n$, we choose an axis $d$ of the hypercube, split the hypercube into two pieces orthogonal to this axis, and define $\Phi^\sigma(x) = \sigma$ for all $x$ belonging to one of these pieces.  To that end, for each $d < \sum\{ h(\seq{2e+1, x, i}) \ : \ x \in I_n^e \ \& \ i < n\}$, let $J_n(d) = \{ \tau \in J_n : \tau(d) = 0\}$.  For each string $\sigma$ that we seek to test on $J_n$, we will choose a unique $d$ and define $\Phi^\sigma(x) = \sigma$ for each $x \in J_n(d)$ with $\Phi^\sigma(x)$ not already defined.  Since we will seek to test at most $\sum\{ h(\seq{2e+1, x, i}) \ : \ x \in I_n^e \ \& \ i < n\}$ many strings on $J_n$, there are sufficient $d$ to give each $\sigma$ a unique $d$.

Next, we must explain what it means to promote a length.  Recall that this is defined for each oracle $Y$, but we only care about it for oracle $B$.  At a stage $s$, let $\ell$ be the longest length which we have already decided should be promoted by the $n$ module (or $\ell = 0$ if there is no such length).  A string $\sigma$ which has been tested on $J_n$ is {\em confirmed at $n$} (relative to $Y$) if $\sigma \in U_{x,s}^Y$ for each $x$ with $\Phi^\sigma(x) = \sigma$.  If there are distinct $\sigma_0, \sigma_1$ of the same length which are both confirmed at $n$ at stage $s$, and such that $\sigma_0\uhr{\ell} = \sigma_1\uhr{\ell}$, then $|\sigma_0|$ is to be promoted by the $n$ module.  In particular, it must be that $|\sigma_0| > \ell$.

This completes the description of the original construction.

\smallskip

We must argue that for each $n > e$, $B$ believes at most $n-1$ lengths should be promoted by the $n$ module.  Suppose not, and fix lengths $0 = \ell_0 < \ell_1 < \ell_1 \dots < \ell_n$ such that for $i > 0$, $B$ believes $\ell_i$ is to be promoted by the $n$ module.  For each $i > 0$, fix strings $\sigma_0^i$ and $\sigma_1^i$ on the basis of which $B$ decided to promote $\ell_i$.  

By construction, $B$ decides $\ell_i$ should be promoted before it decides $\ell_{i+1}$ should be, and thus $\sigma_0^{i+1}\uhr{\ell_i} = \sigma_1^{i+1}\uhr{\ell_i}$ for $i > 0$.  Clearly this also holds for $i = 0$.  Now define the following sequence of sets:
\begin{itemize}
\item $Z_n = \{ \sigma_0^{n}, \sigma_1^{n}\}$;
\item For $0 < i < n$, 
\[
Z_i = Z_{i+1} \cup ( \{\sigma_0^i, \sigma_1^i\} \setminus \{ \sigma\uhr{\ell_i} : \sigma \in Z_{i+1}\}).
\]
\end{itemize}
Note that each $Z_i$ is an antichain, and $\{\sigma_0^i, \sigma_1^i\} \subseteq \{ \sigma \uhr{\ell_i} : \sigma \in Z_i\}$ by construction.  Let $T = \{ \sigma\uhr{\ell_i} : \sigma \in Z_1 \ \& \ i \le n\}$, which we think of as a tree.  Note that the leaves of $T$ are precisely $Z_1$.

For $i < n$, let $v_i = \sigma_0^{i+1}\uhr{\ell_i} = \sigma_1^{i+1}\uhr{\ell_i}$.  Then the $v_i$ are pairwise distinct and each has at least 2 children in $T$ (namely, $\sigma_0^{i+1}$ and $\sigma_1^{i+1}\uhr{\ell_i}$).   Thus $|Z_1| \ge n+1$.

Let $D$ be the set of axes chosen for various $\sigma \in Z_1$, and define $\tau \in J_n$ by
\[
\tau(d) = \left\{\begin{array}{cl}
0 & d \in D,\\
1 & d \not \in D.
\end{array}\right.
\]
Observe that $\tau \in J_n(d) \iff d \in D$.

\begin{claim}
For each $\sigma \in Z_1$, we make the definition $\Phi^\sigma(\tau) = \sigma$.
\end{claim}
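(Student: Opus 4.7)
My plan is to trace each $\sigma \in Z_1$ back to a specific $J_n$-test carried out during the construction, and then to exhibit $\tau$ as a point where that test triggers the definition of $\Phi^\sigma$. Unfolding the recursion defining the $Z_i$'s shows that every $\sigma \in Z_1$ has the form $\sigma_0^j$ or $\sigma_1^j$ for some $j \in \{1,\dots,n\}$, since $Z_i$ is obtained from $Z_{i+1}$ only by adjoining some elements of $\{\sigma_0^i,\sigma_1^i\}$. Because $B$'s decision to promote $\ell_j$ was based on $\sigma_0^j$ and $\sigma_1^j$, both strings must have been confirmed at $n$ relative to $B$, and by definition confirmation at $n$ presupposes that the string has been tested on $J_n$. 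Let $d_\sigma$ denote the unique axis assigned to $\sigma$'s test on $J_n$; by the definition of $D$ we have $d_\sigma \in D$.

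From the definition $\tau(d)=0 \iff d\in D$ we read off $\tau(d_\sigma)=0$, i.e.\ $\tau \in J_n(d_\sigma)$. The testing rule then declares $\Phi^\sigma(x)=\sigma$ for each $x \in J_n(d_\sigma)$ on which $\Phi^\sigma$ is not already defined; applied at $x=\tau$ this yields $\Phi^\sigma(\tau)=\sigma$, provided no prior definition of $\Phi^\sigma(\tau)$ has occurred. To rule out a prior definition, I would observe that the only places where $\Phi^\sigma$ gets defined on inputs from $J_n$ are the $J_n$-tests themselves (the $I_n$-tests touch only inputs in $I_n$, and $I_n \cap J_n = \emptyset$), and that each string is subjected to at most one $J_n$-test, with its one assigned axis. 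Hence the test of $\sigma$ with axis $d_\sigma$ is the unique occasion on which $\Phi^\sigma(\tau)$ could ever be defined, and the definition $\Phi^\sigma(\tau)=\sigma$ is indeed made at that moment.

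The only points that need verification are the small bookkeeping facts that the ``unique $d$'' clause really does assign distinct axes to distinct strings in $Z_1$ (so that the map $\sigma \mapsto d_\sigma$ is injective and $|D|=|Z_1|$ as the surrounding argument will require), and that the ``not already defined'' safeguard does not spuriously prevent the definition at $\tau$. Both are built into the construction's setup, so I expect no genuine obstacle here; the claim is essentially a bookkeeping consequence of how tests and the point $\tau$ were defined.
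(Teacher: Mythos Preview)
Your argument has a genuine gap at the step where you rule out a prior definition of $\Phi^\sigma(\tau)$. You write that ``each string is subjected to at most one $J_n$-test, with its one assigned axis. Hence the test of $\sigma$ with axis $d_\sigma$ is the unique occasion on which $\Phi^\sigma(\tau)$ could ever be defined.'' This inference does not go through: $\Phi$ is an oracle Turing functional, so when the construction tests some string $\sigma'$ and declares $\Phi^{\sigma'}(x) = \sigma'$, this simultaneously defines $\Phi^\rho(x) = \sigma'$ for every $\rho \succeq \sigma'$. In particular, a $J_n$-test of a proper initial segment $\sigma' \prec \sigma$ could have already set $\Phi^\sigma(\tau) = \sigma'$ before $\sigma$ itself is tested, in which case the ``not already defined'' clause would block the desired definition $\Phi^\sigma(\tau) = \sigma$. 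Your argument never addresses tests of such initial segments.

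The paper closes this gap by a short case analysis over all strings $\sigma'$ tested on $J_n$. If $\sigma' \notin Z_1$, then the axis $d_{\sigma'}$ assigned to $\sigma'$ is not in $D$ (since $D$ consists precisely of the axes of members of $Z_1$, and distinct strings receive distinct axes), so $\tau(d_{\sigma'}) = 1$ and hence $\tau \notin J_n(d_{\sigma'})$; the test of $\sigma'$ therefore makes no definition at $\tau$ at all. If $\sigma' \in Z_1 \setminus \{\sigma\}$, then since $Z_1$ is an antichain (stated just before the claim), $\sigma'$ and $\sigma$ are incomparable, so the test of $\sigma'$ defines $\Phi^{\sigma'}(\tau)$ but says nothing about $\Phi^\sigma(\tau)$. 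Both the definition of $D$ and the antichain property of $Z_1$ are essential here, and neither appears in your argument.
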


\begin{proof}
By construction, we will make this definition so long as we have not already defined $\Phi^\sigma(\tau)$ to be something else.  But our actions for any string $\sigma' \not \in Z_1$ will never do this, as such $\sigma'$ will have an axis $d \not \in D$, and so will not seek to make a definition at $\tau$.  And $\sigma' \in Z_1 \setminus \{\sigma\}$ will not do this, as they will only seek to make a definition for $\Phi^{\sigma'}(\tau)$, and $\sigma'$ and $\sigma$ are incomparable as $Z_1$ is an antichain.
\end{proof}

As each of the $\ell_i$ is promoted, we have $Z_1 \subseteq U_\tau^B$, contradicting $|U_\tau^B| \le h(\tau) = n$.

\smallskip

Thus the construction can proceed.  The remainder of the argument is relative to $B$.

Fix $\hat{\ell}$ the longest length which $B$ believes the $e+1$ module should promote.  Nonuniformly fix $A\uhr{\hat{\ell}}$.  Let $L(n,s)$ be the set of lengths being tested by the $n$ module at stage $s$.  At a stage $s$, define a partial sequence $\sigma_n^s$ for $e \le n \le s$ recursively:
\begin{itemize}
\item $\sigma_e^s = A\uhr{\hat{\ell}}$;
\item Given $\sigma_n^s$, define $\sigma_{n+1}^s$ to be a string $\tau$ extending $\sigma_n^s$ with $|\tau| = \max L(n+1,s)$, and such that for each $\ell \in L(n+1, s)$, $\tau\uhr{\ell}$ is confirmed at $n+1$ by stage $s$, if such a string $\tau$ exists.
\end{itemize}

\begin{claim}
There is at most one possible choice for $\sigma_n^s$.
\end{claim}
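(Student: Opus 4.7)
The plan is to induct on $n$. The base case $\sigma_e^s = A\uhr{\hat\ell}$ is immediate. For the inductive step, assume $\sigma_n^s$ is uniquely determined, and suppose for contradiction that two distinct strings $\tau_0 \neq \tau_1$ both qualify as $\sigma_{n+1}^s$. Both extend $\sigma_n^s$, both have length $\max L(n+1,s)$, and for each $\ell \in L(n+1,s)$ the restriction $\tau_i\uhr{\ell}$ is confirmed at $n+1$ by stage $s$.

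The key observation I would first establish is that $|\sigma_n^s|$ upper-bounds every length that module $n+1$ has promoted to module $n$ by stage $s$. For $n > e$ this is because $|\sigma_n^s| = \max L(n,s)$ by the previous step of the recursion, while any length promoted from module $n+1$ to module $n$ has by definition been added to $L(n,\cdot)$. For $n = e$ the same bound holds because $\hat\ell$ is by definition the longest length $B$ ever believes module $e+1$ should promote, and the set of promotions by stage $s$ is a subset of the eventual one.

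Now let $\ell^*$ be the first position where $\tau_0$ and $\tau_1$ disagree, so $|\sigma_n^s| \le \ell^* < \max L(n+1,s)$. Let $\ell_B$ be the least element of $L(n+1,s)$ strictly greater than $\ell^*$; it exists because $\max L(n+1,s)$ qualifies. Pick $s_0 \le s$ to be a stage by which both $\tau_0\uhr{\ell_B}$ and $\tau_1\uhr{\ell_B}$ have become confirmed at $n+1$. Since confirmation requires prior testing on $J_{n+1}$, which in turn requires $\ell_B$ to have already entered $L(n+1,\cdot)$, we have $\ell_B \in L(n+1,s_0)$. At stage $s_0$ let $\ell'$ denote the longest length already promoted by module $n+1$; by the key observation, $\ell' \le |\sigma_n^s| \le \ell^*$. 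Since $\tau_0$ and $\tau_1$ agree on their first $\ell^*$ bits they agree on their first $\ell'$ bits, and $\tau_0\uhr{\ell_B}$ and $\tau_1\uhr{\ell_B}$ are distinct strings of the same length, both confirmed at $n+1$ at stage $s_0$. The promotion criterion therefore fires at or before stage $s_0$, so $\ell_B$ gets promoted to module $n$, forcing $\ell_B \in L(n,s)$. This yields $\max L(n,s) \ge \ell_B > \ell^* \ge |\sigma_n^s| = \max L(n,s)$, the desired contradiction.

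The main delicacy I expect is the timing bookkeeping: one must check that when both $\tau_0\uhr{\ell_B}$ and $\tau_1\uhr{\ell_B}$ are first jointly confirmed, $\ell_B$ is already registered in $L(n+1,\cdot)$ so that the promotion check actually applies to the pair, and that the ``previously promoted'' length $\ell'$ invoked by the criterion at stage $s_0$ is indeed bounded by $|\sigma_n^s|$. Both points follow from monotonicity of $L(n+1,\cdot)$ in the stage variable together with the rule that promotions from module $n+1$ flow into $L(n,\cdot)$, but they are easy to overlook when writing the argument out in full.
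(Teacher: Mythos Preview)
Your proposal is correct and follows essentially the same approach as the paper. The paper's proof is more compressed: it directly takes the least $\ell \in L(n,s)$ with $\tau_0\uhr{\ell} \neq \tau_1\uhr{\ell}$ (which coincides with your $\ell_B$), asserts that $\tau_0\uhr{\ell}, \tau_1\uhr{\ell}$ witness the promotion of $\ell$ at stage $s$, and observes $\ell > |\sigma_{n-1}^s| = \max L(n-1,s)$ for the contradiction, without introducing the first-disagreement position $\ell^*$ or an earlier stage $s_0$. Your ``key observation'' that promoted lengths lie below $\max L(n,s)$, and your check that $\tau_0,\tau_1$ agree on the longest previously promoted length $\ell'$, make explicit two steps the paper leaves tacit; the detour through $s_0$ is unnecessary but harmless.
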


\begin{proof}
For $n = e$, this is immediate.

For $n > e$, suppose there were two distinct strings $\tau_0$ and $\tau_1$ which are appropriate to pick for $\sigma_n^s$.  Fix $\ell \in L(n, s)$ least with $\tau_0\uhr{\ell} \neq \tau_1\uhr{\ell}$.  Then $\tau_0\uhr{\ell}, \tau_1\uhr{\ell}$ witness the promotion of $\ell$ at stage $s$, and $\ell > |\sigma_{n-1}^s|$, as $\tau_0$ and $\tau_1$ both extend $\sigma_{n-1}^s$.  This contradicts $|\sigma_{n-1}^s| = \max L(n-1, s)$ (or contradicts the definition of $\hat{\ell}$ if $n = e+1$).
\end{proof}

\begin{claim}
Let $\ell_n = \max \bigcup_s L(n,s)$ for $n > e$, and $\ell_e = \hat{\ell}$.  Then $A\uhr{\ell_n} = \lim_s \sigma_n^s$ for $n \ge e$.
\end{claim}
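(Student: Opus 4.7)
The plan is induction on $n \ge e$. The base case $n = e$ is immediate from the definitions: $\sigma_e^s = A\uhr{\hat\ell}$ for all $s$ and $\ell_e = \hat\ell$, so $\lim_s \sigma_e^s = A\uhr{\ell_e}$. For the inductive step, assuming $\lim_s \sigma_n^s = A\uhr{\ell_n}$, I will identify $\tau := A\uhr{\ell_{n+1}}$ as the unique valid candidate for $\sigma_{n+1}^s$ at sufficiently large stages, and then invoke the uniqueness claim established just above to conclude $\sigma_{n+1}^s = \tau$.

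Two ingredients drive the argument. First, $L(n+1,s)$ is nondecreasing in $s$ and bounded in cardinality by $|I_{n+1}|$, so it stabilizes to some finite set $L(n+1,\infty)$ with $\max L(n+1,\infty) = \ell_{n+1}$. Moreover, since $f$ is nondecreasing in both arguments, $f(n+1) \in L(n+1,\infty)$, and every length promoted from the $(n+1)$-module first appeared in some $L(n+1,s)$, we have $\ell_{n+1} \ge f(n+1) \ge f(n)$ and $\ell_{n+1}$ dominates every promoted length; therefore $\ell_{n+1} \ge \ell_n$. Second, for each $\ell \in L(n+1,\infty)$, when the $(n+1)$-module tests $A\uhr\ell$ on $J_{n+1}$, it defines $\Phi^{A\uhr\ell}(x) = A\uhr\ell$ for all $x$ in the chosen face $J_{n+1}(d)$. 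Since the use of this computation is at most $\ell$ and $A\uhr\ell \prec A$, we obtain $\Phi^A(x) = A\uhr\ell$ for each such $x$. By the tracing assumption, $(U_x^B)_{x\in\omega}$ enumerates $\Phi^A(x)$ whenever it converges, so there is a stage after which $A\uhr\ell \in U_{x,s}^B$ for every such $x$; that is, $A\uhr\ell$ is eventually confirmed at $n+1$ relative to $B$.

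Now pick $s_0$ so large that for all $s \ge s_0$: (i) $\sigma_n^s = A\uhr{\ell_n}$ by the inductive hypothesis; (ii) $L(n+1,s) = L(n+1,\infty)$; and (iii) $A\uhr\ell$ is confirmed at $n+1$ by stage $s$ for every $\ell \in L(n+1,\infty)$. For such $s$, the string $\tau = A\uhr{\ell_{n+1}}$ extends $\sigma_n^s = A\uhr{\ell_n}$ (using $\ell_{n+1} \ge \ell_n$), has length $\max L(n+1,s) = \ell_{n+1}$, and for each $\ell \in L(n+1,s)$ the restriction $\tau\uhr\ell = A\uhr\ell$ is confirmed at $n+1$ by stage $s$. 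Thus $\tau$ satisfies every requirement in the recursive definition of $\sigma_{n+1}^s$, and by the preceding uniqueness claim, $\sigma_{n+1}^s = A\uhr{\ell_{n+1}}$, completing the induction.

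The main technical obstacle is the eventual-confirmation step: one must verify that the collection of $x$ at which $\Phi^{A\uhr\ell}(x) = A\uhr\ell$ gets defined is finite (it lies inside a single face of $J_{n+1}$), so that a single stage suffices to land $A\uhr\ell$ in $U_{x,s}^B$ simultaneously for every such $x$, and that no later action of the construction overwrites this definition. A secondary but indispensable bookkeeping point is the monotonicity $\ell_{n+1} \ge \ell_n$; without it, the required extension of $\sigma_n^s$ to length $\max L(n+1,s)$ need not exist and the recursion could stall.
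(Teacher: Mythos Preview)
Your proof is correct and follows essentially the same approach as the paper's: induction on $n$ with the base case immediate, then in the inductive step first establishing the monotonicity $\ell_{n+1}\ge \ell_n$ (via the two sources of lengths in $L(n,\cdot)$, namely $f$-values and promoted lengths), and then using the tracing of $\Phi^A$ by $(U_x^B)$ to get eventual confirmation of each $A\uhr\ell$, so that $A\uhr{\ell_{n+1}}$ becomes the unique valid choice for $\sigma_{n+1}^s$. Your write-up is somewhat more detailed than the paper's on the confirmation step and on why monotonicity is needed, but the argument is the same.
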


\begin{proof}
Induction on $n$.  The case $n = e$ is immediate.

For $n > e$, first observe that $\ell_{n-1}$ is either a length promoted by the $n$ module (and so eventually an element of $L(n,s)$) or is $f(n-1, s) \le f(n,s)$ for some $s$, and so is bounded by an element of $L(n,s)$.  Thus $\ell_{n-1} \le \ell_n$.

Now fix $s_0$ sufficiently large such that $\sigma_m^s = A\uhr{\ell_m}$ for all $m < n$ and $s \ge s_0$, and such that $L(n,s_0) = \bigcup_s L(n,s)$.  As $\Phi^A$ is traced by $(U_x^B)_{x \in \omega}$, there is a stage $s_1 \ge s_0$ such that each $A\uhr{\ell}$ for $\ell \in L(n, s_0)$ is confirmed at $n$.  Then $A\uhr{\ell_n}$ is a possible choice for $\sigma_n^s$ for every $s \ge s_1$, and thus is $\sigma_n^s$.
\end{proof}

Define a sequence of stages $(s_t)_{t \in \omega}$ as follows:
\begin{itemize}
\item $t_0 = e$.
\item Given $s_t$, $s_{t+1}$ is the least $s>s_t$ such that for every $n$ with $e < n \le t$, $\sigma_n^s$ exists. 
\end{itemize}
Define $A_t = \sigma_t^{s_t}$.

\begin{claim}
$(A_t)_{t \in \omega} \models c$
\end{claim}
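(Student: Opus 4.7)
My plan is to prove that $(A_t)_{t\in\omega}$ is a $B$-computable enumeration obeying the cost function $c$, which amounts to checking two things.

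First, $(A_t)$ is $B$-computable. Once the finite string $A\uhr{\hat\ell}$ is fixed nonuniformly, every object in the construction---the sets $L(n,s)$, the $B$-c.e.\ trace $(U_x^B)$, the confirmation relation at each $n$, and hence the unique strings $\sigma_n^s$---is computable in $B$. The stages $s_t$ are then defined by a $B$-computable recursion, and so $A_t=\sigma_t^{s_t}$ is $B$-computable.

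For obedience, I would bound $\sum_t c(n_t,s_t)$, where $n_t$ is the least position at which $A_{t-1}$ and $A_t$ differ. A disagreement can arise either by extension ($A_{t-1}\prec A_t$, in which case $n_t = |A_{t-1}| = \max L(t-1,s_{t-1})\ge f(t-1,s_{t-1})$) or by a flip, in which case one identifies the least $m\le t-1$ with $\sigma_m^{s_{t-1}}(n_t)\neq\sigma_m^{s_t}(n_t)$. By the uniqueness claim the string $\sigma_m^s$ can change only when $L(m,s)$ strictly gains a length, which happens at most $g(m)+(m-1)$ times. Moreover, minimality of $m$ forces $n_t$ to lie beyond $|\sigma_{m-1}^{s}|\ge f(m-1,s)$ for one of the relevant stages $s\in\{s_{t-1},s_t\}$, since otherwise $\sigma_m^s$ would be determined by $\sigma_{m-1}^s$ at position $n_t$.

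The main obstacle is the precise bookkeeping that turns these observations into the bound $\sum_t c(n_t,s_t)<\infty$. I would do this by assigning to each change $t$ a minimal responsible module $m(t)$ and splitting the sum as $\sum_m \sum_{t\in m^{-1}(m)} c(n_t,s_t)$; then use the facts that (i) the inner sum has at most $g(m)+(m-1)$ terms, because each such $t$ witnesses a change of $\sigma_m^s$ caused by a new element of $L(m,\cdot)$ rather than an inherited change from $\sigma_{m-1}^s$ (which would contradict minimality), and (ii) for each such $t$ one has $n_t\ge f(m-1,s_{t-1})$, so once $f(m-1,\cdot)$ has stabilized the cost is at most $c(f(m-1),\cdot)<2^{-(m-1)}$. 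The pre-stabilization transient at each level $m$ is controlled by benignity of $c$ applied at level $\epsilon=2^{-m}$, which caps the length of any admissible sequence of high-cost pairs and so forces the transient contribution at each module to be absorbable. Summing, one obtains $\sum_t c(n_t,s_t)<\infty$, establishing $(A_t)\models c$ and completing the proof of Proposition~\ref{lem:SJT_JT_to_obey}.
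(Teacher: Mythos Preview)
Your overall strategy---assign each change to the least module $m$ at which $\sigma_m^{s_t}\ne\sigma_m^{s_{t+1}}$, bound $n_t$ from below by $|\sigma_{m-1}^{s_t}|\ge f(m-1,s_t)$, and sum over modules---is the right idea and matches the paper's proof. The gap is in your count of how many changes a single module can be responsible for. You claim this is at most $g(m)+(m-1)$ because ``$\sigma_m^s$ can change only when $L(m,s)$ strictly gains a length.'' But $L(m,\cdot)$ receives lengths from two sources: changes of $f(m,\cdot)$ (at most $g(m)$) and lengths handed down from module $m+1$ via the trace $(V_y)$, i.e., the elements of $\bigcup_{i<m}V_{\seq{2e,m+1,i}}$, which can number up to $\sum_{i<m}h(\seq{2e,m+1,i})$ rather than $m-1$. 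With the correct bound on $|L(m,\cdot)|$ your sum becomes $\sum_m\bigl(g(m)+\sum_{i<m}h(\seq{2e,m+1,i})\bigr)\cdot 2^{-m}$, and there is no reason for this to converge; invoking benignity does not help, since benignity controls the cost function, not $g$ or $h$.

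The paper closes the gap with a different count. It does not tie changes to growth of $L(m,\cdot)$; instead it observes that whenever $\sigma_m$ changes at the minimal level $m$, the two strings $\sigma_m^{s_t}\uhr\ell$ and $\sigma_m^{s_{t+1}}\uhr\ell$ (for the least $\ell\in L(m,s_t)$ where they disagree) are both confirmed at $m$ by stage $s_{t+1}$, which forces a \emph{promotion} by module $m$ at some stage in $(s_t,s_{t+1}]$. Since it was established earlier that module $m$ promotes at most $m-1$ lengths in total, the number of $t$ with cost $\ge 2^{-n}$ is at most $\sum_{e<m\le n}(m-1)<n^2$, giving $c((A_t))<\sum_n n^2\cdot 2^{-n}<\infty$. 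Note also that the separate appeal to benignity for a ``pre-stabilization transient'' is unnecessary: one may choose the approximation $f(n,s)$ so that $c(f(n,s),s)<2^{-n}$ for every $s$, so the per-change cost bound $c(n_t,s_t)<2^{-(m-1)}$ holds at all stages, not just eventually.
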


\begin{proof}
Suppose $e < n \le t$ and $A_t(z) \neq A_t(z+1)$ for some $z$ with $c(z, t) \ge 2^{-n}$.  
As $c(z, s_t) \ge c(z, t)$, $z < f(n, s_t) \in L(n, s_t)$.  Thus $\sigma_n^{s_t} \neq \sigma_n^{s_{t+1}}$.  Fix $m$ least with $\sigma_m^{s_t} \neq \sigma_m^{s_{t+1}}$.  Fix $\ell \in L(m, s_t)$ least with $\sigma_m^{s_t}\uhr{\ell} \neq \sigma_m^{s_{t+1}}\uhr{\ell}$.   If no length less than $\ell$ and greater than $\max L(m-1, s_t)$ is promoted by the $m$ module at a stage $s \in (s_t, s_{t+1}]$, then these witness the promotion of $\ell$ at stage $s_{t+1}$, and $\ell > \max L(m-1, s_t)$, as both $\sigma_m^{s_t}$ and $\sigma_m^{s_{t+1}}$ extend $\sigma_{m-1}^{s_t} = \sigma_{m-1}^{s_{t+1}}$.  So whenever there is such a $z, n$ and $t$, there is a promotion by an $m$ module for $m \le n$ at a stage after $s_t$.

There can be at most $\sum_{e < m \le n} m-1 < n^2$ such promotions over the entire construction.  Thus we can bound
\[
c( (A_t)_{t \in \omega}) < \sum_n n^2 \cdot 2^{-n} < \infty.\qedhere
\]
\end{proof}
This ends the proof of Proposition~\ref{lem:SJT_JT_to_obey}.
\end{proof}

   \section{Nies and Stephan: Update on the SMB theorem for measures}

  The   purpose of this post is to provide   an example showing that the boundedness hypothesis in  \cite[Prop.\ 24]{Nies.Stephan:19}  is necessary. 
  
   We briefly review some background and notation.
Let     $\mathbb A^\infty$
denote the topological space of one-sided infinite sequences of
symbols in an alphabet $\mathbb A$.  Randomness notions etc.\ carry
over from the case that  $\mathbb A = \{0, 1\}$.
A measure $\rho$ on $\mathbb A^\infty$ is called  \emph{shift 
invariant} if $\rho ( G) = \rho (T^{-1}(G))$ for each open   (and
hence each measurable) set $G$. 
The \emph{empirical entropy}  of a measure  $\rho$ along $Z\in \mathbb
A^\infty$ is given by the sequence of   random variables  \[
h^\rho_n(Z ) = -\frac 1 n \log_{|\mathbb A|} \rho [Z\uhr n].\]   
A shift invariant measure $\rho$ on $\mathbb A^\infty$  is called 
\emph{ergodic} if   every  $\rho$ integrable function $f $ with $f
\circ T = f$  is constant $\rho$-almost surely.
The following  equivalent  condition can be easier to check: for any strings
 $u,v \in \mathbb A^*$,  
\[ \lim_N \frac 1 N \sum_{k=0}^{n-1} \rho ([u] \cap T^{-k}[v]) = \rho
[u] \rho [v]. \]
For ergodic $\rho$, the entropy $H(\rho)$ is defined as $\lim_n
H_n(\rho)$, where \[H_n(\rho) =  -\frac 1 n \sum_{|w| = n}  \rho [w]
\log \rho [w].\]
Thus, $H_n(\rho) = \mathbb E_\rho  h^\rho_n$ is the expected value
with respect to $\rho$.  
Recall  that by concavity of the logarithm function  and subadditivity of the entropy $H(X,Y) \le H(X) + H(Y)$,   the limit  exists and equals the infimum of the sequence. This limit is denoted $H(\rho)$, the entropy of $\rho$.

\begin{theorem}[SMB   theorem, e.g.~\cite{Shields:96}] Let  $\rho $ be an  ergodic   measure 
on the space $\mathbb A^\infty$. 
For $\rho$-almost every~$Z$   we have  $\lim_n h^\rho_n(Z) = H(\rho)$.
\end{theorem}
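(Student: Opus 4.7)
The plan is to follow the classical proof via martingale convergence and Birkhoff's ergodic theorem, with a maximal-function argument to control the error term. First I would pass, using the natural extension if necessary, to a two-sided shift-invariant ergodic process $(Z_k)_{k\in\ZZ}$ so that conditioning on the infinite past is well-defined. For $k\ge 1$ set
\[
g_k(Z)\;=\;-\log_{|\mathbb A|}\rho\bigl[Z_0\mid Z_{-k},\ldots,Z_{-1}\bigr],\qquad g_0(Z)\;=\;-\log_{|\mathbb A|}\rho[Z_0],
\]
and let $g(Z)=-\log_{|\mathbb A|}\rho[Z_0\mid \+F_{-\infty}^{-1}]$ be the conditional entropy with respect to the whole past. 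By L\'evy's upward martingale convergence theorem, $g_k\to g$ almost surely and in $L^1(\rho)$.

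Next I would exploit the chain rule $\rho[Z_0\cdots Z_{n-1}] = \prod_{k<n}\rho[Z_k\mid Z_0,\ldots,Z_{k-1}]$, rewritten via the shift as the pointwise identity
\[
-\log_{|\mathbb A|}\rho[Z\uhr{n}]\;=\;\sum_{k=0}^{n-1} g_k(T^k Z).
\]
Taking $\rho$-expectations and passing to the limit identifies $\mathbb E_\rho g$ with $\lim_n H_n(\rho) = H(\rho)$. Birkhoff's ergodic theorem applied to $g$ then gives $\tfrac1n \sum_{k<n} g(T^k Z)\to H(\rho)$ almost surely, so it suffices to show that the error
\[
E_n(Z)\;=\;\frac1n\sum_{k=0}^{n-1}\bigl(g_k-g\bigr)(T^k Z)
\]
tends to zero almost surely.

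For this, define the tail-supremum $h_N(Z)=\sup_{k\ge N}|g_k(Z)-g(Z)|$ and apply the Chung--Doob maximal inequality to the submartingale $(g_k)$ to deduce that the maximal function $G(Z)=\sup_k g_k(Z)$ is $\rho$-integrable. Together with the a.s.\ convergence $g_k\to g$, dominated convergence yields $\mathbb E_\rho h_N\to 0$ as $N\to\infty$. Applying Birkhoff's theorem to each $h_N$ then gives
\[
\limsup_n \frac1n \sum_{k=N}^{n-1}|g_k-g|(T^k Z)\;\le\;\mathbb E_\rho h_N,
\]
and letting $N\to\infty$ forces $E_n(Z)\to 0$ almost surely; the finitely many $k<N$ terms contribute $0$ in the $\tfrac1n$-average.

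The main obstacle I expect is the $L^1$-integrability of $G$. This does not follow from naive considerations (the supremum of a uniformly integrable martingale need not be integrable), and it requires the specific Chung--Doob maximal inequality for sequences of conditional $-\log$-probabilities, which is the true technical heart of the argument. Once that integrability is secured, the remaining ingredients---chain rule, Birkhoff's theorem, dominated convergence---combine routinely to give the stated almost-sure limit.
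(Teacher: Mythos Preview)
The paper does not prove this statement; it is quoted as a classical result with a reference to Shields' monograph, and the text proceeds immediately to the next proposition. So there is no paper proof to compare against.

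Your sketch is the standard Breiman route and is essentially sound, with one inaccuracy worth flagging: the sequence $(g_k)$ is \emph{not} a submartingale, so Doob's maximal inequality does not apply to it directly. What one actually needs is the specific maximal lemma (due to Chung, and used by Breiman) that bounds $\rho\{\sup_k g_k > t\}$ and yields $G=\sup_k g_k\in L^1(\rho)$; you do identify this as the technical crux in your final paragraph, but the earlier phrasing ``apply the Chung--Doob maximal inequality to the submartingale $(g_k)$'' is misleading. Once $G\in L^1$ is in hand, your decomposition of $h^\rho_n$ into the Birkhoff average of $g$ plus the error $E_n$, and the control of $E_n$ via the tail-suprema $h_N$ together with dominated convergence and Birkhoff applied to each $h_N$, is correct and is exactly the argument in Shields.
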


\begin{proposition}[\cite{Nies.Stephan:19}, Prop.\ 7.2] \label{prop: bound} Let  $\rho $ be a computable   ergodic        measure
on the space $\mathbb  A^\infty$ such that for some constant $D$,  
each $h_n^\rho$ is   bounded above by $D$. Suppose the measure $\mu$
is \ML\ a.c.\  with respect to $\rho$. 
Then  $\lim_n E_\mu |h^\rho_n - H(\rho)| =0$. 
\end{proposition}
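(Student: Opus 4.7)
The plan is to combine an effective pointwise version of the Shannon--McMillan--Breiman theorem with the bounded convergence theorem, where the hypothesis $h_n^\rho \le D$ supplies exactly the required dominating function.

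First I would invoke the effective pointwise SMB theorem for a computable ergodic measure $\rho$ (Hoyrup--Rojas, V'yugin, Hochman): the convergence $h_n^\rho(Z) \to H(\rho)$ holds for every $Z$ that is Martin-L\"of random with respect to~$\rho$, not merely $\rho$-almost every~$Z$. Since ML absolute continuity of $\mu$ with respect to $\rho$ implies that $\mu$-almost every $Z$ is $\rho$-ML-random (this is the content of the ML~a.c.\ definition used in \cite{Nies.Stephan:19}), this already upgrades the pointwise convergence to holding $\mu$-almost everywhere.

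Next I would check uniform boundedness of the integrand. Because $\rho[Z\uhr n] \le 1$ we have $h_n^\rho \ge 0$ pointwise; by hypothesis $h_n^\rho \le D$. Integrating the upper bound against~$\rho$ gives $H_n(\rho) = E_\rho h_n^\rho \le D$, so passing to the infimum $H(\rho) \le D$. Hence
\[
|h_n^\rho(Z) - H(\rho)| \le 2D
\]
for every $n$ and every $Z$. As $2D$ is $\mu$-integrable (being a constant and $\mu$ a probability measure), the bounded convergence theorem on $(\mathbb A^\infty, \mu)$ delivers
\[
\lim_n E_\mu |h_n^\rho - H(\rho)| = 0.
\]

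The main obstacle is conceptual rather than technical: the whole argument rests on the hypothesis $h_n^\rho \le D$ serving as a uniform dominating function, and it is precisely the absence of such a bound that permits the counterexample being set up in this post. Without a uniform bound, even though pointwise convergence on $\mu$-a.e.\ sequence persists, the $\mu$-weight assigned to sequences on which $\rho[Z\uhr n]$ is atypically large (making $h_n^\rho$ close to $0$ while $H(\rho)$ may be positive) or atypically small can generate non-negligible contributions to $E_\mu|h_n^\rho - H(\rho)|$. Correspondingly, the role of a concrete example is to exhibit a $\mu$ that is ML~a.c.\ with respect to some unbounded-entropy-density $\rho$ for which these contributions do not vanish in the limit.
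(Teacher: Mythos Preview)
The paper does not prove this proposition; it is quoted as Prop.~7.2 of \cite{Nies.Stephan:19}, and the surrounding section only supplies Example~\ref{ex:unbounded} showing the boundedness hypothesis cannot be dropped. Your argument is correct and is essentially the intended one: the effective SMB theorem (Hoyrup, V'yugin) gives $h_n^\rho(Z)\to H(\rho)$ for every $\rho$-ML-random $Z$; ML~a.c.\ of $\mu$ with respect to $\rho$ means the non-$\rho$-ML-random sequences are $\mu$-null, so the convergence holds $\mu$-a.e.; and the uniform bound $|h_n^\rho - H(\rho)|\le 2D$ allows the bounded convergence theorem to upgrade this to $L^1(\mu)$ convergence. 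Your final paragraph correctly identifies where the argument breaks without the bound, which is exactly the point of the example that follows in the paper.
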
 

We now given an example showing that the boundedness hypothesis on the $h_n^\rho$ is necessary. In fact we  provide a computable  ergodic measure $\rho$ such that some finite measure $\mu \ll \rho$ makes the sequence $E_\mu h_n^\rho$ converge to $\infty$. This condition  $\mu \ll \rho$ (every $\rho$ null set is a $\mu$ null set) is stronger than  requiring that $\mu$
is \ML\ a.c.\  with respect to $\rho$.
\begin{example} \label{ex:unbounded}  There is an ergodic computable measure $\rho$ (associated to a binary renewal process) and a computable measure $\mu \ll  \rho$  such that 
$\lim_n \mathbb E_\mu h^\rho_n = \infty$. (We can then normalise $\mu $ to become a probability measure, and still have the same conclusion.)
\end{example}
%
\begin{proof} Let $k$ range over positive natural numbers.
The real  $c = \sum_k \tp{-k^4}$ is computable. Let $p_k =  \tp{-k^4}/c$ so that $\sum p_k=1$.   Let $b= \sum_k k \cdot p_k$ which is also computable.

Let $\rho $ be the measure associated with the corresponding binary renewal process, which is  given by the conditions \begin{center} $\rho[Z_0 =1]= 1/b$ and $\rho(10^k 1 \prec Z  \mid Z_0 =1) = p_k$.  \end{center}
Informally, the process has initial value  $1$ with probability $1/b$, and after each $1$ with probability $p_k$ it takes $k$ many 0s until it reaches the next 1. See again e.g.~\cite[Ch.\ 1]{Shields:96} where it is shown that $\rho$ is ergodic.
Write $v_k = 1 0^k 1$. Note that $\rho[v_k] = p_k/b$. 

Define a  function $f$ in $L_1(\rho)$ by $f(v_k \ape Z ) =   k^{-2}/p_k$ and $f(X)=0$ for any $X$ not extending any $v_k$.  
  It is clear that $f$ is $L_1(\rho)$-computable, in the usual sense that there is an effective sequence of basic functions $\seq{f_n}$ converging effectively to $f$: let $f_n(X)= f(X)$ in case $v_k \prec X$, $k \le n$, and $f_n(X)= 0 $ otherwise. Define   the    measure $\mu$   by  $d\mu = f d\rho$, i.e. $\mu(A) = \int_A f d\rho$.  Thus $\mu[v_k]= k^{-2}/b$. Since $\rho$ is computable and $f$ is $L_1(\rho)$-computable, $\mu$ is computable. Also note that $\mu(\cantor)= \int f d\rho$ is finite.

For any $n>2$, letting   $k= n-2$, we have
 \[E_\mu h_n^\rho \ge -\frac 1 n \mu[v_k] \log \rho[v_k]= - \frac 1 {n  k^{2} b}   (k^4-bc) \ge \frac {k^2} {nb} -O(1).\]
\end{proof}

\section{Nies and Tomamichel: the measure associated with an infinite sequence of qubits}

For background and notation see the 2017 Logic Blog entry~\cite[Section~6]{LogicBlog:17}.  

Recall that mathematically, a qubit is a  unit vector in the Hilbert space~$\mathbb C^2$. We  give  a brief summary  on   ``infinite  sequences" of qubits. One considers the $C^*$   algebra $M_\infty =  \lim_n M_{2^n}(\mathbb C)$, an  approximately finite  (AF) $C^*$ algebra. ``Quantum Cantor space" consists of the state set $\+ S(M_\infty)$, which is a convex, compact, connected set with a shift operator, deleting the first  qubit. 

Given a finite  sequence of qubits, ``deleting"  a particular one generally  results in a statistical superposition of the remaining ones. This is is why $\+ S(M_\infty)$ consists of coherent sequences  of density matrices $\mu = \seq {\mu_n}\sN n$ where $\mu_n$ is  in $M_{2^n}(\mathbb C)$ (density matrices formalise such superpositions), rather than just of sequences of unit vectors in $(\mathbb C^2)^{\otimes n}$.  To be coherent means that $T(\mu_{n+1})= \mu_n$ where $T$ is the partial trace operation deleting the last qubit. For more background on this, as well as  an algorithmic  notion of randomness   for such states,  see Nies and Scholz \cite{Nies.Scholz:18}.  

We defined in~\cite[Section~6]{LogicBlog:17} what it means for a state $\mu $ on $M_\infty$ to be qML-random with respect to a computable shift invariant state $\rho$. 

For each density matrix $D\in M_{2^n}$ its diagonal $\ol D$ is also a density matrix. This is because the operator $A \mapsto \sum_{x \in \{0,1\}^n} P_x D P_x$ is completely positive and trace preserving (here as usual  $P_x = | x \ra \la x |$ is the projection onto the subspace spanned by the basis vector given by $x$).   Clearly   $\ol \mu_n = T(\ol {\mu_{n+1}})$ for each $n$ because taking the partial trace means adding corresponding items on the two quadratic $\tp n \times \tp n$ components along the diagonal. So $\ol \mu$ is a diagonal state, and hence corresponds to a measure on Cantor space. 
Clearly if $\mu$ is computable then so is $\ol \mu$.  Shift invariance is also preserved by this operation. 

Ergodicity of $\ol \mu$ can be used as a test of the more complicated ergodicity for $\mu$.
\begin{fact} If $\ol \mu$ is  ergodic then so is $\mu$. \end{fact}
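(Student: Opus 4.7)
The plan is to reduce the ergodicity of $\mu$ to that of $\ol\mu$ by verifying the asymptotic decoupling criterion on diagonal local observables. First I would fix the working characterisation: a shift-invariant state $\mu$ on $M_\infty$ is ergodic iff for every $a, b$ in some diagonal subalgebra $D_n \subset M_{2^n}$,
$$\lim_{N\to\infty} \frac{1}{N}\sum_{k=0}^{N-1} \mu(a\,T^k(b)) = \mu(a)\,\mu(b).$$
This is the natural criterion for the quantum SMB framework of the section, since the empirical entropies $h_n^\rho$ only see the diagonal of $\mu$ anyway.

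The key technical observation is that shifted cylinder projections multiply into diagonal operators. For $P_u = |u\rangle\langle u|$ and $P_v = |v\rangle\langle v|$, the product $P_u \cdot T^k(P_v)$ is either zero or the projection $P_w$ onto the unique cylinder word $w$ that agrees with $u$ on positions $0,\ldots,|u|-1$ and with $v$ on positions $k,\ldots,k+|v|-1$. Consequently
$$\mu(P_u \cdot T^k(P_v)) = \ol\mu([u]\cap T^{-k}[v]).$$
Expanding an arbitrary $a = \sum_u a_u P_u \in D_n$ and $b = \sum_v b_v P_v \in D_n$ over cylinder projections and using bilinearity, the quantum Cesàro average reduces to the classical one. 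The classical ergodic criterion for $\ol\mu$ then gives $\frac{1}{N}\sum_k \ol\mu([u]\cap T^{-k}[v]) \to \ol\mu[u]\,\ol\mu[v] = \mu(P_u)\,\mu(P_v)$, and summing over $u, v$ completes the verification of the decoupling for $\mu$ on $D_n$.

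The main obstacle is really the choice of definition rather than any computation: one must read \emph{ergodicity of $\mu$} as the diagonal decoupling condition above, since simple convex combinations of pure product states (for instance $\tfrac12(|+\rangle\langle+|^{\otimes\infty} + |-\rangle\langle-|^{\otimes\infty})$) show that full asymptotic decoupling for off-diagonal observables such as tensor products of the Pauli $X$ is strictly stronger than classical ergodicity of $\ol\mu$. The fact as stated captures exactly the implication that holds in full generality and that is needed for the quantum analogues of SMB; making this interpretive choice explicit is the step that deserves the most care in writing up the proof.
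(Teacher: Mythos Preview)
Your route and the paper's diverge. The paper reads ergodicity of $\mu$ as extremality among shift-invariant states on $M_\infty$ and argues by contrapositive in one line: if $\mu = \alpha\eta + \beta\nu$ is a nontrivial convex combination of shift-invariant states, then $\ol\mu = \alpha\ol\eta + \beta\ol\nu$ is asserted to be one as well, so $\ol\mu$ is not ergodic.

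You have in effect located the gap in that argument. The diagonal map can collapse distinct $\eta,\nu$ to $\ol\eta = \ol\nu$, and your example $\tfrac12\bigl(|+\rangle\langle+|^{\otimes\infty} + |-\rangle\langle-|^{\otimes\infty}\bigr)$ does exactly this: it is a nontrivial mixture of two distinct shift-invariant pure product states, yet both have the fair-coin Bernoulli measure as diagonal, so $\ol\mu$ is ergodic while $\mu$ is not extremal. Under the extremal reading the Fact is therefore false, and the paper's proof fails precisely at the word ``nontrivial''.

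Your proposed repair, however, does not recover a contentful statement. Once you restrict the Ces\`aro-decoupling criterion to diagonal $a,b\in D_n$, every quantity $\mu(a\,T^k(b))$, $\mu(a)$, $\mu(b)$ already coincides with its $\ol\mu$-counterpart, since products of shifted diagonal operators remain diagonal. Your ``ergodicity of $\mu$'' is then literally the classical criterion applied to $\ol\mu$, and the Fact collapses to the tautology ``$\ol\mu$ ergodic $\Rightarrow$ $\ol\mu$ ergodic''. The honest summary is that you have supplied a genuine counterexample to the stated implication for the standard quantum notion, but the weakening you propose leaves nothing nontrivial to prove.
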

\begin{proof} Suppose  $\mu = \aaa \eta + \beta \nu$ is a nontrivial convex decomposition of $\mu$  into shift-invariant states. Then $\ol \mu = \aaa \ol \eta + \beta \ol \nu$ is a nontrivial convex decomposition  of $\ol \mu$  into shift-invariant states as well. \end{proof}

\begin{fact} Let $\rho$ be a  computable shift-invariant measure. If  a state $\mu$ is qML-random wrt $\rho$ then so is $\ol \mu$. \end{fact}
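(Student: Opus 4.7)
The plan is to argue by contrapositive: assume $\ol\mu$ fails some quantum ML-test $(p_m)$ relative to $\rho$, and cook up a quantum ML-test $(q_m)$ relative to $\rho$ witnessing that $\mu$ itself is not qML-random. The key identity I will exploit is that for any density matrix $\eta \in M_{2^n}(\mathbb{C})$ and any operator $P\in M_{2^n}(\mathbb{C})$,
\[ \mathrm{tr}(\ol\eta \cdot P) \; = \; \sum_{x \in \{0,1\}^n} \la x | \eta | x\ra \, \la x | P | x \ra \; = \; \mathrm{tr}(\eta \cdot \ol P), \]
where $\ol P = \sum_x P_x P P_x$ with $P_x = |x\ra\la x|$. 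Thus diagonalisation is self-adjoint for the Hilbert--Schmidt inner product, and in particular it passes through any diagonal state: since $\rho$ is a classical measure (hence its canonical image in the state space of $M_\infty$ is diagonal), we have $\ol\rho = \rho$, and the identity collapses to $\mathrm{tr}(\rho\, \ol P) = \mathrm{tr}(\rho\, P)$.

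Concretely, I will set $q_m := \ol{p_m}$. Since diagonalisation is computable uniformly in $n$, the sequence $(q_m)$ is uniformly effective whenever $(p_m)$ is, and $0 \le q_m \le I$. The test bound falls out of the identity:
\[ \rho(q_m) \; = \; \mathrm{tr}(\rho \, \ol{p_m}) \; = \; \mathrm{tr}(\ol\rho \, p_m) \; = \; \mathrm{tr}(\rho\, p_m) \; = \; \rho(p_m) \; \le \; 2^{-m}, \]
so $(q_m)$ is a bona fide quantum ML-test relative to $\rho$. For the failure condition the same identity gives
\[ \mu(q_m) \; = \; \mathrm{tr}(\mu \, \ol{p_m}) \; = \; \mathrm{tr}(\ol\mu \, p_m) \; = \; \ol\mu(p_m), \]
and by assumption the right-hand side does not exhibit whatever decay is required for $\ol\mu$ to pass $(p_m)$. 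Hence $\mu$ fails $(q_m)$, contradicting the qML-randomness of $\mu$ with respect to $\rho$.

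The main technical worry will be whether the diagonalised operators $\ol{p_m}$ still lie in the particular class of positive operators admitted as test components in the Nies--Scholz framework. If the official definition admits effective POVM elements, nothing is left to check: $0\le \ol{p_m}\le I$ and uniform computability are already in hand. If instead the definition insists on genuine projections, then $\ol{p_m}$ (a diagonal matrix with entries in $[0,1]$) needs to be replaced; however, since $\ol{p_m}$ is diagonal it corresponds to an effective nonnegative function $f_m$ on Cantor space, and one can substitute the c.e.\ open set $[\![U_m]\!]$ where $f_m$ exceeds a dyadic threshold, paying only a constant factor in the measure bound (which can be absorbed by re-indexing). Either route preserves both effectivity and the quantitative bound, so the reduction from $\mu$'s qML-randomness to $\ol\mu$'s is complete.
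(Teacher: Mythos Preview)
Your proof is correct and rests on the same underlying identity the paper uses, namely self-adjointness of the diagonalisation map for the trace pairing: $\mathrm{Tr}(\ol\eta\, P) = \mathrm{Tr}(\eta\, \ol P)$. The paper, however, takes a shorter route that sidesteps your technical worry about whether $\ol{p_m}$ is an admissible test component. Rather than starting from an arbitrary quantum test that $\ol\mu$ fails and diagonalising it, the paper simply observes that for any \emph{classical} $\Sigma^0_1$ set $G$ --- represented by an ascending sequence of \emph{diagonal} clopen projections $p_m$ --- one has $\mu(G) = \ol\mu(G)$ directly, since $\mathrm{Tr}(\mu\uhr m\, p_m) = \mathrm{Tr}(\ol\mu\uhr m\, p_m)$ whenever $p_m$ is already diagonal. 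No diagonalisation of the test is needed, so the projection-versus-POVM issue never arises. This is enough for the intended application (the subsequent proposition only uses the classical consequence $\ol\mu \ll_{ML} \rho$), and since every classical ML-test is in particular a quantum test, $\mu$'s qML-randomness guarantees it passes them. Your route handles the nominally stronger claim that $\ol\mu$ passes \emph{all} quantum tests, at the price of the extra bookkeeping you flag; the paper trades that generality for a two-line argument.
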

\begin{proof} Note that for each classical $\SI 1$ set $G$ we have $\mu(G) = \ol \mu (G)$, where on the left hand side $G$ is interpreted as an ascending sequence $\seq {p_m}$ of clopen projections $p_m \in M_m$, and then   $\mu(G) = \lim_m \mu(p_m)$. But $\mu(p_m ) = \mathrm{Tr} (\mu\uhr m p_m)= \mathrm{Tr} (\ol \mu\uhr m p_m)$ because $p_m$ is diagonal. \end{proof}

We obtain a partial quantum version of Prop.\ \ref{prop: bound}. This answers  one special  case of Conjecture 6.3 in \cite{LogicBlog:17} (unfortunately the roles of $\mu$ and $\rho$ are   exchanged there).  The boundedness hypothesis turned out to be  necessary by Example~\ref{ex:unbounded}, but was not present in the statement of the conjecture back then.
\begin{proposition}  \label{prop: bound Q} Let  $\rho $ be a computable   ergodic        measure
on the space $\mathbb  A^\infty$ such that for some constant $D$,  
each $h_n^\rho$ is   bounded above by $D$. Write $s= H(\rho)$. 
Suppose the state   $\mu$
is qML random  with respect to $\rho$. 
Then  $\lim_n \mathrm{Tr} (\mu \uhr n  |h^\rho_n - s I_{\tp n}|) =0$. 
\end{proposition}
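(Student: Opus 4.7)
The plan is to reduce the statement to the classical Proposition~\ref{prop: bound} by passing to the diagonal $\ol\mu$; the two Facts recorded immediately before the proposition were set up precisely for this reduction.

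First I would observe that $\mathrm{Tr}(\mu\uhr n\;|h_n^\rho - sI_{2^n}|)$ depends only on the diagonal of $\mu\uhr n$. Indeed, $h_n^\rho$, interpreted as the operator with $w$-entry $-\tfrac{1}{n}\log_{|\mathbb A|}\rho[w]$ for $w\in\mathbb A^n$, is diagonal in the computational basis, and hence so is $|h_n^\rho - sI_{2^n}|$; and for any $A\in M_{2^n}(\mathbb C)$ and any diagonal $D$ one has $\mathrm{Tr}(AD)=\mathrm{Tr}(\ol A\, D)$. Consequently
\[
\mathrm{Tr}\bigl(\mu\uhr n\;|h_n^\rho - sI_{2^n}|\bigr) \;=\; \sum_{w\in\mathbb A^n}\ol\mu_n[w]\,\bigl|{-\tfrac{1}{n}\log_{|\mathbb A|}\rho[w] - s}\bigr| \;=\; \mathbb{E}_{\ol\mu}\,|h_n^\rho - s|,
\]
where on the right $\ol\mu$ is viewed as the probability measure on $\mathbb A^\infty$ corresponding to the diagonal state. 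This reduces matters to a statement purely about the classical measure $\ol\mu$.

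Next, by the second Fact above, $\ol\mu$ is qML-random with respect to $\rho$. The key secondary step is to promote this to \emph{classical} Martin-L\"of absolute continuity of $\ol\mu$ with respect to $\rho$, as required by the hypothesis of Proposition~\ref{prop: bound}. Using the identity $\mu(G)=\ol\mu(G)$ for classical $\Sigma^0_1$ sets $G$ that appeared in the proof of that Fact, any classical ML-test $(G_m)$ for $\rho$ becomes a (diagonal) quantum ML-test on which qML-randomness of $\ol\mu$ forces $\ol\mu(G_m)\to 0$; this is exactly the classical ML a.c.\ condition.

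Having this, I would apply Proposition~\ref{prop: bound} to $\ol\mu$ in place of $\mu$: the boundedness hypothesis on $h_n^\rho$ is unchanged (it is a property of $\rho$ alone) and the ML a.c.\ hypothesis is now in hand, yielding $\lim_n\mathbb{E}_{\ol\mu}\,|h_n^\rho - s|=0$; combined with the displayed identity this gives the proposition. The main obstacle is the compatibility step in the previous paragraph: pinning down precisely that qML-randomness of the diagonal state $\ol\mu$ against the classical $\rho$ implies classical ML a.c., so that Proposition~\ref{prop: bound} genuinely applies. Beyond that, no properly quantum feature of $\mu$ enters the argument, which is consistent with the observable $|h_n^\rho - sI_{2^n}|$ being insensitive to the off-diagonal structure of $\mu\uhr n$.
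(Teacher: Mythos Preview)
Your proposal is correct and follows essentially the same route as the paper: pass to the diagonal state $\ol\mu$, use that $|h_n^\rho - sI_{2^n}|$ is diagonal to rewrite the trace as $\mathbb E_{\ol\mu}|h_n^\rho - s|$, invoke the second Fact to get randomness of $\ol\mu$, and apply Proposition~\ref{prop: bound}. The only difference is that you spell out the passage from qML-randomness of $\ol\mu$ to classical ML a.c.\ via classical tests, whereas the paper records this as immediate (since for a diagonal state against a classical $\rho$ the two notions coincide).
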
 
Here the function $h^\rho_n$ is viewed as defined on strings of length $n$, and in the expression above we identify it with the corresponding diagonal matrix in $M_n$.
\begin{proof}
Let $\ol \mu$ be the classical state (measure) such that $\ol \mu \uhr n$ is the diagonal of  $\mu\uhr n$, as above. By the fact above we have $\ol \mu \ll_{ML} \rho$ i.e., the non $\rho$-MLR bit sequences form a $\ol \mu $ null set.  Now  \bc 
$ \mathrm{Tr} (\mu \uhr n  |h^\rho_n - s I_{\tp n}|) =  \mathrm{Tr} (\ol \mu \uhr n  |h^\rho_n - s I_{\tp n}|) = E_\mu |h^\rho_n - s| $.  \ec
It now suffices to apply  Prop.\ \ref{prop: bound}.
\end{proof} 

If $\rho$ is i.i.d.\ then the boundedness condition  on the $h_n^\rho$  holds. This yields a new proof of \cite[Thm. 6.4]{LogicBlog:17} (first turning  the ergodic state $\rho$ into a classical state by applying a fixed unitary ``qubit-wise", as before).


 \part{Set theory}
 \section{Yu: perfect subsets of uncountable sets of reals}
 We make some     remarks on a recent result: 
\begin{theorem}[Hamel, Horowitz, Shelah \cite{Hamel.etal:19}]\label{theorem: HHS19}

\

\n
Assume $ZF+DC$. If  every uncountable   Turing invariant set of reals  has a perfect subset, then so has  every uncountable set of reals.
\end{theorem}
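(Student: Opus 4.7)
The plan is to argue by contrapositive: assume $A \subseteq 2^\omega$ is uncountable and has no perfect subset, and aim to contradict the hypothesis. The natural Turing-invariant companion to consider is the Turing closure $[A] = \{y \in 2^\omega : \exists a \in A,\ y \equiv_T a\}$. Since every Turing degree is countable and DC implies that countable unions of countable sets are countable, $A$ must meet uncountably many degrees, so $[A]$ is uncountable. By the hypothesis, $[A]$ contains a perfect set $P$; the task is to use $P$ to find a perfect subset of $A$ itself.

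The basic tool will be a Lusin--Novikov-style cover of Turing equivalence by countably many continuous partial injections. Enumerate pairs of Turing functionals $(\Phi_i, \Phi_j)$ and set $f_{i,j}(x) = \Phi_i(x)$ whenever $\Phi_j(\Phi_i(x)) = x$. Each $f_{i,j}$ is a continuous partial injection (its inverse on its image is another Turing functional), and $x \equiv_T y$ iff $y = f_{i,j}(x)$ for some $(i,j)$. Next apply Silver's dichotomy to the Borel equivalence relation $\equiv_T$ restricted to $P$: since the classes are countable and $P$ is not, we obtain a perfect $P' \subseteq P$ of pairwise Turing-inequivalent reals. For each $p \in P'$ some $f_{i,j}(p)$ lies in $A$, so $P' = \bigcup_{i,j} P'_{i,j}$ where $P'_{i,j} = \{p \in P' : f_{i,j}(p)\downarrow \in A\}$. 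DC forces some $P'_{i,j}$ to be uncountable, and any perfect subset of $P'_{i,j}$ would, via the continuous injection $f_{i,j}$, yield a perfect subset of $A$.

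The main obstacle is precisely this last step: $P'_{i,j} = P' \cap f_{i,j}^{-1}(A)$ involves $A$ and is therefore not Borel, so the classical perfect-set theorem for Borel sets does not apply. The hypothesis has to be exploited one more time by feeding the Turing closure $[P'_{i,j}]_T \subseteq [A]$ back into it. Iterating this construction along a Cantor--Bendixson-style rank on the lattice of uncountable Turing-invariant subsets of $[A]$ — removing at each stage a Turing-invariant piece generated by a perfect set — should produce a decreasing transfinite sequence that must stabilize (since the power set of $[A]$ cannot be well-ordered too long under DC), and the stabilization point either directly exhibits a perfect subset of $A$ or yields a Turing-invariant uncountable remainder violating the hypothesis. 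Choosing the right rank so that termination descends from a perfect subset of some $[A_\alpha]$ to a perfect subset of $A$ itself is where the delicate argument of Hamel--Horowitz--Shelah is required, and is the step I expect to be technically hardest.
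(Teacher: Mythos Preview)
The paper does not actually prove this theorem: it is quoted from Hamel--Horowitz--Shelah, and the paper only makes remarks on how to remove $AC_\omega$ from the known recursion-theoretic proof.  From those remarks one can see that the intended argument also begins exactly as you do: form the Turing closure $[A]_T$, extract a perfect set $Q\subseteq [A]_T$, observe that $[Q]_T\cap A$ is uncountable, and decompose along pairs of Turing functionals so that some $Q_{e,i}\cap A$ is uncountable.  So your setup is on the right track, and the Silver step you insert is harmless but unnecessary --- each $f_{i,j}$ is already injective on its domain because $\Phi_j$ is a left inverse, so pairwise Turing-inequivalence buys you nothing.

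The genuine gap is your termination argument.  You propose to iterate, ``removing at each stage a Turing-invariant piece generated by a perfect set'' and claim the process must stabilise because ``the power set of $[A]$ cannot be well-ordered too long under DC.''  This is false: DC places no bound on the ordinal length of well-ordered chains of subsets, and nothing in your description makes the successive Turing-invariant sets strictly decrease in any well-founded sense.  Feeding $[P'_{i,j}]_T$ back into the hypothesis just produces another perfect set inside $[A]_T$ and another uncountable non-Borel piece of the same type; you have not identified a rank that drops.  The paper's remarks indicate that the actual proof does not iterate transfinitely but instead exploits the concrete form of the sets $Q_{e,i}$ (in particular their closedness properties inside an auxiliary perfect set $P$) to finish in one further step; that is the idea you are missing, and it is not recoverable from a generic Cantor--Bendixson scheme.
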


We obtained an improvement of the Theorem which was added  in Section~III of the most recent version of~\cite{Hamel.etal:19}.

\begin{theorem}[Yu \cite{Hamel.etal:19}]\label{theorem: Yu20}
Assume $ZF+AC_{\omega}$. For any analytic countable equivalence relation $E$,  if every uncountable   $E$-invariant set of reals has a perfect subset, then so has every uncountable set of reals.
\end{theorem}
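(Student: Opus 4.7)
\emph{Proof plan.} The strategy is to reduce to a Feldman–Moore style representation of $E$ by a countable group action, and then show that the $E$-saturation of any uncountable totally imperfect set is itself totally imperfect, contradicting the hypothesis.

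\emph{Step 1: Structural reduction.} First I would observe that an analytic equivalence relation with only countable classes is in fact Borel (by a Lusin–Novikov style uniformization argument, which goes through in $ZF+AC_{\omega}$). Then, by the Feldman–Moore theorem, there is a countable group $G=\{g_n : n\in\omega\}$ acting on $\mathbb{R}$ by Borel automorphisms $\phi_n\colon\mathbb{R}\to\mathbb{R}$ such that $xEy \iff \exists n\,(\phi_n(x)=y)$. Thus for every set $A\subseteq \mathbb{R}$, $[A]_E = \bigcup_n \phi_n(A)$.

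\emph{Step 2: Setup.} Assume toward a contradiction that some uncountable $A\subseteq\mathbb{R}$ has no perfect subset. Set $B=[A]_E$. Because each $E$-class is countable and $A$ is uncountable, $B$ is uncountable, and $B$ is $E$-invariant by construction. Invoking the hypothesis applied to $E$-invariant sets: either $B$ itself has no perfect subset, which already contradicts the hypothesis, or $B$ contains a perfect set $P$. We work under the latter assumption.

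\emph{Step 3: No piece of the decomposition has a perfect subset.} Write $P=\bigcup_n P_n$ where $P_n=P\cap \phi_n(A)$. Here is the key observation, which uses $AC_{\omega}$ via the Borel perfect set theorem: if some $P_n$ contained a perfect subset $Q$, then $\phi_n^{-1}(Q)$ would be a Borel subset of $A$ of the same uncountable cardinality, so by the Borel perfect set property (available in $ZF+AC_{\omega}$) it would contain a perfect subset—which would be a perfect subset of $A$, a contradiction. Therefore every $P_n$ is totally imperfect.

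\emph{Step 4: Fusion to a single piece.} At this stage we have a perfect Polish space $P$ written as a countable union of totally imperfect sets, and the remaining task is to extract a contradiction. The plan is a fusion (tree) argument carried out inside $P\cong 2^{\omega}$: construct, level by level, a perfect subtree $T\subseteq 2^{<\omega}$ whose body $[T]$ lies entirely inside some $\phi_n(A)$ for a fixed $n$. At each splitting stage, we exploit that the sets $\phi_n(A)$ are images of $A$ under Borel bijections together with the hypothesis applied to suitably chosen $E$-invariant subsets of $B$: if no refinement can be trapped inside a single $\phi_n(A)$, the obstructing set one reads off is itself an uncountable $E$-invariant set that is seen to be totally imperfect, contradicting the hypothesis one more time. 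Once such a $T$ is built, $[T]$ is a perfect subset of $\phi_n(A)$, and $\phi_n^{-1}([T])$ provides (via the Borel PSP used exactly as in Step 3) a perfect subset of $A$, the desired contradiction.

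\emph{Main obstacle.} The delicate point is Step 4: in $ZF+AC_{\omega}$ alone, a perfect Polish space \emph{can} in principle be a countable union of totally imperfect sets (Bernstein-type phenomena are only ruled out by stronger hypotheses), so the contradiction must actually use the specific $E$-invariant hypothesis and not just general descriptive set theory. This is precisely where Yu's argument replaces the Turing-theoretic coding used by Hamel, Horowitz and Shelah (cones, basis theorems, etc.) with the uniform Borel group-action machinery supplied by Feldman–Moore, combining it with the Borel perfect set theorem to carry out the fusion within $AC_{\omega}$.
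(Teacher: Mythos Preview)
Your Steps 1--3 set up a reasonable framework, but Step 4 is a genuine gap, and you identify it as the main obstacle without actually resolving it. The fusion plan as sketched cannot work in the form you suggest: since each $P_n = P \cap \phi_n(A)$ is totally imperfect (your Step 3), no perfect subtree of $P$ can be ``trapped'' inside a single $\phi_n(A)$---that is precisely what ``totally imperfect'' rules out---so there is nothing for the fusion to converge to. Your fallback (``the obstructing set one reads off is itself an uncountable $E$-invariant set that is seen to be totally imperfect'') is not specified: you do not say what this set is, why it is $E$-invariant, or why it is totally imperfect. Without that, the argument does not close, and you have essentially rediscovered the Bernstein obstruction you yourself flag.

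The paper does not include a proof (it refers to Section~III of the revised Hamel--Horowitz--Shelah preprint), but the remarks following the statement indicate the intended structure, and it is not a fusion. One applies the hypothesis \emph{twice}. First obtain a perfect $Q \subseteq [A]_E$. Then, using the Lusin--Novikov decomposition of $E$ restricted to the \emph{closed} set $Q$, write $[Q]_E$ as a countable union of \emph{Borel} sets $Q_{e,i}$ (in the Turing case these are the sets $\{a : \Phi_i^a \in Q \text{ and } \Phi_e^{\Phi_i^a} = a\}$). Since $[Q]_E \cap A$ is uncountable, some $A \cap Q_{e,i}$ is uncountable; now apply the hypothesis a second time to $[A \cap Q_{e,i}]_E$ to obtain a second perfect set $P$. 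The point is that one now has Borel control coming from both $Q$ and $P$ simultaneously, and it is this interplay (signalled by the clause ``$Q_{e,i} \cap A = Q_{e,i} \cap P$'' in Remark~1) that lets one extract a perfect subset of $A$---not a tree construction inside a single $\phi_n(A)$.

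A minor point: your route through Feldman--Moore is heavier than necessary and its status in $ZF+AC_\omega$ would need checking. Lusin--Novikov uniformization already supplies the countable family of Borel functions witnessing $E$, and the injectivity you need in Step~3 (to pull back a perfect set to an uncountable Borel set) can be arranged directly from those witnesses without passing to a group action by Borel automorphisms.
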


{\bf Remark 1}: Actually $AC_{\omega}$ can be removed from Theorems \ref{theorem: HHS19}  and  \ref{theorem: Yu20}.  In the recursion theoretic proof of Theorem \ref{theorem: HHS19}, the first use of $AC_{\omega}$ is to prove that $[Q]_T\cap A$ is uncountable. But this is clearly unnecessary, since otherwise $Q\subseteq [[Q_T]\cap A]_T$ would be countable but without appealing $AC_{\omega}$ due to the uniformity.  

The second use of $AC_{\omega}$ is to prove that $Q_{e,i}\cap A$ is uncountable for some $e,i$. But if $Q_{e,i}\cap A$ is countable for all $e,i$, then the computation is uniform since  $Q_{e,i}\cap A= Q_{e,i}\cap P$ is a countable closed set.  $AC_{\omega}$ can be removed from Theorem \ref{theorem: Yu20} for  similar reasons.

{\bf Remark 2}: Ironically we need $AC_{\omega}$ to prove the conclusion   for every countable Borel equivalence relation since the Borelness implying $\mathbf{\Sigma^1_1}$-ness  requires $AC_{\omega}$.
But for most natural Borel countable equivalence relations, it seems $AC_{\omega}$ is unnecessary.

   
%

%
%
\def\cprime{$'$} \def\cprime{$'$}

%
%

\end{document}